\DeclareMathOperator{\Ff}{Fact}
\DeclareMathOperator{\Pre}{Pref}
\DeclareMathOperator{\card}{card}
\DeclareMathOperator{\orb}{Orb}
\DeclareMathOperator{\alf}{alph}
\newcommand{\Nn}{\mathbb N}
\newcommand{\Sh}{{\rm Sh}}
\newcommand{\nats}{{\mathbb N}}
\newtheorem{thm}{Theorem}
\newtheorem{question}[thm]{Question}
\newtheorem{theorem}{Theorem}[section]
\newtheorem{lemma}[theorem]{Lemma}
\newtheorem{corollary}[theorem]{Corollary}
\newtheorem{proposition}[theorem]{Proposition}
\theoremstyle{definition}
\newtheorem{remark}[theorem]{Remark}
\newtheorem{example}[theorem]{Example}
\newtheorem{definition}[theorem]{Definition}
\begin{document}

\begin{frontmatter}

\title{A Coloring Problem for  Infinite Words}

\author[label1]{Aldo de Luca\fnref{label5}\corref{cor1}}
  \ead{aldo.deluca@unina.it}

    \fntext[label5]{Partially supported by GNSAGA of CNR.}

  \author[label2]{Elena V. Pribavkina\fnref{label6}}
  \ead{elena.pribavkina@usu.ru}

  \fntext[label6]{Supported under the Agreement 02.A03.21.0006 of 27.08.2013
between the Ministry of Education and Science of the Russian
Federation and Ural Federal University;
supported by the Presidential Programm for young researchers, grant MK-3160.2014.1 and
by the Russian Foundation for Basic research, grant 13-01-00852.}

   \author[label3,label4]{Luca Q. Zamboni\fnref{label7}}
  \ead{lupastis@gmail.com}

  \fntext[label7]{Partially supported by a FiDiPro grant (137991) from the Academy of Finland and by
ANR grant {\sl SUBTILE}.}
 \cortext[cor1]{Corresponding author.}
\address[label1]{Dipartimento di Matematica e Applicazioni,
Universit\`a di Napoli Federico II, Italy}
\address[label2]{Institute of Mathematics and Computer Science, Ural Federal University, Ekaterinburg, Russia}
\address[label3]{FUNDIM, University of Turku, Finland}
\address[label4]{Universit\'e de Lyon,
Universit\'e Lyon 1, CNRS UMR 5208,
Institut Camille Jordan,
43 boulevard du 11 novembre 1918,
F69622 Villeurbanne Cedex, France}

\begin{abstract}
In this paper we consider the following question in the spirit of Ramsey theory: Given $x\in A^\omega,$ where $A$ is a finite
non-empty set, does there exist a finite coloring
 of the non-empty factors of $x$
with the property that no factorization of  $x$ is monochromatic? We prove that this question has a positive answer using two colors  for  almost all words relative to the standard Bernoulli measure on $A^\omega.$ We also show that it has a positive answer for various classes of uniformly recurrent words, including all aperiodic balanced words, and all words $x\in A^\omega$ satisfying
$\lambda_x(n+1)-\lambda_x(n)=1$ for all $n$ sufficiently large, where $ \lambda_x(n)$ denotes the number of distinct factors of $x$ of length $n.$
\end{abstract}

\begin{keyword}Ramsey theory, Sturmian words, factor complexity.
\MSC 68R15
\end{keyword}
\journal{Journal of Combinatorial Theory Series A}

\end{frontmatter}

\section{Introduction}

In this paper we address the following question posed  by T. C. Brown  \cite{BTC} and independently by the third author  \cite{LQZ}:

\begin{question}\label{que:prima}
Given $x\in A^\omega,$ where $A$ is a finite non-empty set, does there exist a finite coloring
$ c:  \Ff ^+ x  \rightarrow C $ of the set of non-empty factors of $x$
with the property that for each factorization $x= U_1U_2U_3 \cdots$, there exist positive integers
$i,j$ for which $c(U_i)\neq c(U_j)$ ?
\end{question}

We shall refer to Question~\ref{que:prima} as  the {\em coloring problem}  for infinite words. We let ${\cal P}$ denote
the collection of all infinite words over any  finite alphabet for which the coloring problem has a positive answer, and  ${\cal P}_k$ (with $k\geq 2)$ the collection of all $x\in {\cal P}$ for which there exists a coloring map $ c:  \Ff ^+ x  \rightarrow C $ with $\card (C)=k$ such that for each factorization $x= U_1U_2U_3 \cdots$, there exist positive integers $i,j$ with $c(U_i)\neq c(U_j).$ A  related, and perhaps even more difficult question  is the following

\begin{question}\label{sue:seconda} If  an infinite word $w$ is in ${\cal P}$, then what  is the least integer $k$ such that
$w$ is in  ${\cal P}_k$ ?
\end{question}

It is evident that if $x\in A^\omega$ is purely periodic, i.e., $x=u^\omega$ for some $u\in A^+,$ then for each coloring $ c:  \Ff ^+ x  \rightarrow C ,$ the factorization $x=uuu\cdots$ is monochromatic; hence no purely  periodic word belongs to $ {\cal P}.$
Henceforth, for simplicity,  we use the term periodic to mean purely periodic and the term ultimately periodic for a word $x= vu^{\omega}$ for some $v\in A^*$, so non-periodic means not purely periodic. The term aperiodic will mean not ultimately periodic.   We know of no example of a non-periodic word which does not belong to ${\cal P}$ and conjecture that ${\cal P}$ contains all non-periodic words.

Consider the binary word $x=ab^\omega \in \{a,b\}^\omega.$ Then $x\in {\cal P}_2.$ In fact it suffices to color a factor $u$ of $x$ by
$$ c(u) = \begin{cases}
a, & \text{if $u \in ab^*$}; \\ b, &  \text{if $u \in b^+$}.
\end{cases} $$
Clearly,  relative to this $2$-coloring $c$ of $\Ff^+ x$,  no factorization of $x$ is monochromatic. However,  we note that the suffix $b^\omega$ does admit a monochromatic factorization with respect to $c.$ This phenomenon is in general unavoidable. In fact,  given any $x\in A^\omega$ and any finite coloring  $ c:  \Ff ^+ x  \rightarrow C ,$ there exists a suffix $x'$ of $x$ which admits a monochromatic factorization relative to $c.$ This fact may be obtained via a straightforward application of the infinite Ramsey theorem  (see Proposition~\ref{Ram} or  \cite{schutz} for a proof by M. P. Sch\"utzenberger which does not use Ramsey's theorem).
This suggests that given a non-periodic word $x\in A^\omega,$ a finite coloring $c$ which avoids a monochromatic factorization of $x$ must depend in part on the word $x$ and not just  on its set of factors.

We show that relative to the standard Bernoulli measure on $A^\omega$,  almost all words are in ${\cal P}_2$. More precisely, we consider words of  full  complexity, i.e., for every $n\geq 0$, the number $\lambda_x(n)$ of distinct
factors of $x$ of length $n$ reaches its maximal value namely $d^n$, with $d= \card(A)$.
 We prove that full complexity  words do not admit prefixal factorizations. Hence, given a full complexity word $s$, if we color the factors of $s$ according to the rule $c(u)=0$ if  $u$  is a prefix of $s$, and $c(u)=1$ otherwise, it follows that relative to this coloring rule, $s$ does not admit a monochromatic factorization. It is well known \cite{AS} that relative to the Bernoulli measure, almost all words are of full complexity.

 We next consider the class of all non-uniformly recurrent words, which properly contains the class of full complexity words.
We prove that every non-uniformly word belongs to ${\cal P}_k$  for some $k.$ Our proof uses the notion of derived words in the sense of F. Durand \cite{Du} and  some
 invariance properties of the coloring problem.
Essentially, we prove that each non-uniformly recurrent word $s$ has a derived word  which does not admit a prefixal factorization, and hence which is in ${\cal P}_2$. From this we deduce that  $s$ belongs to  ${\cal P}_k$ for some $k$ which
depends on the derived word.
This allows us to restrict Question~\ref{que:prima} to the class of all non-periodic uniformly recurrent words.

Uniformly recurrent words typically possess a rigid combinatorial structure, and examples suggest that given a non-periodic uniformly recurrent word $x\in A^\omega,$ any coloring $c$ which avoids a monochromatic factorization of $x$ typically reflects one or more combinatorial properties of $x.$
In this paper, we show that the coloring problem has a positive answer for various families of non-periodic uniformly recurrent words.

The most basic non-periodic uniformly recurrent words are Sturmian words. Their origins can be traced back to the astronomer J. Bernoulli  III in 1772 \cite{BJ}.  Sturmian words are infinite words having exactly $n+1$ factors of length $n$ for each $n \geq 0.$  They can be  regarded to be those non-periodic words which are closest to periodic words.
Sturmian words arise naturally in different areas of mathematics including
combinatorics, algebra, number theory, ergodic theory, dynamical systems, and differential equations.
They are also of great importance in theoretical physics (as basic  examples of $1$-dimensional quasicrystals \cite{AM}), and in theoretical computer science, where they are used in
computer graphics as digital approximations of straight lines (see, for instance, \cite{KR}).

In this paper we prove that the  coloring problem has a positive answer for all Sturmian words,
thereby solving a problem raised in both \cite{BTC} and \cite{LQZ}.  More precisely, we show that given any Sturmian word $w,$ there exists a $3$-coloring of its set of factors relative to which $w$ does not admit a monochromatic factorization. Our  proof is constructive and relies on  some  combinatorial properties of Sturmian words which may be of independent interest.
As a consequence of the Sturmian case,  we prove that given positive integers $N$ and $k,$ then any  word $x$  having exactly $n+k$ distinct factors of each length $n\geq N$ is in ${\cal P}.$
As another consequence we show that all aperiodic balanced words belong to ${\cal P}$.  An infinite word  $x$ over the alphabet  $A$  is balanced if for every letter $a\in A$ the numbers of occurrences of $a$ in any two of  factors $x$ of equal length differ by at most $1$. An aperiodic balanced word over  a two-letter alphabet is a Sturmian word (see Section~\ref{Sec:sw}).

The paper is organized as follows: In Section~\ref{bpcp} we investigate  basic properties of the class ${\cal P}$ mainly related to the prefixal factorizations of infinite words. In particular,
we prove that almost all  words are in ${\cal P}_2$. Moreover, we show that the coloring problem has a positive answer for various classes of non-periodic words,  including all overlap-free words (and hence all square-free words), all Lyndon words, and all standard episturmian words. In
 Section~\ref{sec:ipp} we prove some basic 
 invariance properties of the coloring problem.
 In Section~\ref{sec:nur} we show that ${\cal P}$ contains all non-uniformly recurrent words.
Section~\ref{Sec:sw} is devoted to the proof that every Sturmian word is in ${\cal P}$ and to its consequences.

An extended abstract  of some of the results of this paper appeared  in the Proceedings of DLT 2013
  \cite{adleplz}.

\section{Preliminaries}\label{Prel}

Given a  non-empty set $A,$ or {\em alphabet},  we let  $A^*$ denote the set of
all finite words $u=u_1u_2\cdots u_n$ with $u_i\in A.$ The
quantity $n$ is called the {\em length} of $u$ and is denoted $|u|.$
The {\em empty word}, denoted $\varepsilon,$ is the
unique element in $A^*$ with $|\varepsilon|=0.$
We set
$A^+=A-\{\varepsilon\}.$ For each word $v\in A^+$, let $|u|_v$  denote the number of occurrences
of $v$ in $u$. In the following we suppose that the alphabet $A$ is finite even though several results hold true
for any alphabet.

Given words $u,v\in A^+$ we say $v$ is a {\it border} of $u$ if $v$ is both a proper prefix and a proper suffix of $u.$ In case $u$ admits a border, we say $u$ is {\it bordered}. Otherwise $u$ is called {\it unbordered}. Two words $u$ and $v$ are said to be {\it conjugate} if there exist $r$ and $t$ such that $u=rt$ and $v=tr.$  A word $u\in A^+$ is called {\it Lyndon} if there exists a linear order on $A$ with respect to which $u$ is lexicographically smaller than all its conjugates. For instance, $ababb$ is Lyndon with respect to the order $a<b.$ It is readily checked that every Lyndon word is unbordered.

  Let  $A^\omega$  denote the set of all
one-sided infinite words $s=s_1s_2\cdots $ with $s_i\in A.$ We endow $A^\omega$ with the topology generated by the metric
\[d(s, t)=\frac 1{2^n}\,\, \mbox{where} \,\, n=\inf\{k \mid s_k\neq t_k\}\] 
whenever $s=(s_n)_{n\geq 1}$ and $t=(t_n)_{n\geq 1}$ are two elements of $A^\omega.$
The resulting topology is generated by the collection of {\it cylinders} $ [a_1,\ldots ,a_n]$, where
for each $n>0$ and $a_i\in A$, $1\leq i \leq n$,
\[
[a_1,\ldots ,a_n]=\{s\in A^\omega \,|\, s_i=a_i\, \ \mbox{for}\, 1\leq i\leq n\}.\]
It can also be described as being the product topology on $A^\omega$ with the discrete topology on  $A$. In particular this topology is compact. Each probability vector ${\mathbf p}=(p_i)_{i\in A}$ determines a measure $\mu_{\mathbf p}$ ({\it Bernoulli measure} corresponding to ${\mathbf p})$ defined as the unique measure on the $\sigma$-algebra of $A^\omega$ such that $\mu_{\mathbf p}([a_1,\ldots ,a_n])=p_{a_1}\cdots p_{a_n}.$ By the {\it standard Bernoulli measure} on $A^\omega$ we mean the Bernoulli measure corresponding to the probability vector ${\mathbf p}=(\frac 1{d}, \ldots , \frac 1{d})$, with $d=\card(A)$.

Given $s\in A^\omega,$ let
$\Ff^+ s=\{s_is_{i+1}\cdots s_{i+j}\,|\, i\geq 1,j\geq 0\}$ denote the set of all non-empty {\it factors} of $s$. Moreover, we set
$\Ff s =  \{\varepsilon\} \cup \Ff^+ s$.
The {\em factor complexity} of $s$ is the map $\lambda_s: \Ff s \rightarrow \Nn$ defined as follows: for any $n\geq 0$
$$ \lambda_s(n) = \card( A^n \cap \Ff s),$$
i.e., $\lambda_s(n)$ counts the number of distinct factors of $s$ of length $n$.
For any  finite or infinite word $s$ over the alphabet $A$, let  $\alf s$ denote  the set of all letters of $A$ occurring in $s$.
 A  factor $u$ of  $s$  is called {\em right special} (resp., {\em left special}) if there exist two different letters $x$ and $y$  such that $ux$ and $uy$ (resp., $xu$ and $yu$) are factors of $s$.

Given $s=s_1s_2s_3\cdots \in A^\omega,$ and $u\in \Ff^+s,$ let $s\big|_u$ denote the set of all occurrences of $u$ in $s,$ i.e., \[s\big|_u=\{n\,|\, s_ns_{n+1}\cdots s_{n+|u|-1}=u\}.\]
A factor $u$ of  $s \in A^\omega$ is called {\em recurrent} if $s\big|_u $  is infinite and {\em uniformly recurrent} if $s\big|_u$ is {\em syndetic}, i.e., there exists an integer $k$ such that in any factor of $s$ of length $k$ there is at least one occurrence of $u$. An infinite word $s$ is called {\it recurrent} (resp., {\it uniformly recurrent}) if each of its factors is recurrent (resp., uniformly recurrent). It is readily verified that  the word  $s$ is non-recurrent  (resp., non-uniformly recurrent) if and only if there exists a prefix of $s$ which is non-recurrent (resp., non-uniformly recurrent).

Let $s \in A^{\omega}$ and ${\cal S}$ denote the {\em shift operator}. The {\em shift orbit} of $s$ is the set $\orb(s)=\{{\cal S}^k(s) \mid k\geq 0\}$,  i.e., the set of all suffixes of $s$. The {\em shift orbit closure} of $s$ is the set $\{y \in A^{\omega} \mid \Ff y \subseteq \Ff s \}$. Two infinite words  $s$ and $t$ are in the same shift orbit if there exists an infinite word $z$ such that $s,t \in \orb(z)$, i.e., if and only if  either $s$ is a suffix of $t$ or $t$ is a suffix of $s$.

 An infinite word $s$ is called {\it periodic} if $s=u^\omega$ for some $u\in A^+,$ and is called {\it ultimately periodic} if $s=vu^\omega$ for some $v\in A^*,$ and $u\in A^+.$ As is well known,  an ultimately periodic word which is non-periodic is not recurrent (see, for instance, \cite[Lemma 1.4.4]{DV}). The word $s$ is called {\it aperiodic} if  $s$ is not ultimately periodic.

An infinite word $s$ is called {\it Lyndon} if there exists a linear order on $A$ with respect to which $s$ is lexicographically smaller than all its proper suffixes (see, for example,  \cite{SI}).  In particular, Lyndon words are not periodic, although they may be ultimately periodic, e.g., $ab^\omega.$ It is easy to see that each Lyndon infinite word begins with  an infinite number of distinct unbordered prefixes.

Given $k$ infinite words $x^{(1)},x^{(2)}, \ldots ,x^{(k)} \in A^\omega$ we consider the following {\em shuffle operator}~\cite{icalp}
$$ \Sh (x^{(1)},x^{(2)}, \ldots ,x^{(k)})\subseteq A^\omega$$ defined as  the collection of all infinite words $z$ for which there exists a factorization
$$z=\prod_{i=1}^\infty U_i^{(1)}U_i^{(2)}\cdots U_i^{(k)}$$
with each $U_i^{(j)} \in A^*$ and with
$x^{(j)}=\prod_{i=1}^\infty U_i^{(j)}$ for each $1\leq j\leq k.$

For all definitions and notation not explicitly given in the paper, the reader is referred to the books  \cite{AS,LO,LO2}.

\section{Prefixal factorizations}\label{bpcp}

We begin this section with an application of  the infinite Ramsey theorem \cite{GRS} which is relevant to the coloring problem:

\begin{proposition}[see \cite{GRS} and \cite{schutz}]\label{Ram} Let  $x\in A^{\omega}$ and $c: \Ff^+x\rightarrow C$ be any finite coloring  of the non-empty factors of $x.$ Then there exists a suffix $x'$ of $x$ which admits a monochromatic factorization relative to $c.$ 
\end{proposition}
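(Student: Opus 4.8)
The plan is to apply the infinite Ramsey theorem to the complete graph on the positive integers $\Nn$, using the coloring $c$ to induce an edge-coloring. First I would define an auxiliary finite coloring $\widehat c$ of the $2$-element subsets of $\Nn$: for $i<j$, set $\widehat c(\{i,j\}) = c(x_i x_{i+1}\cdots x_{j-1})$, that is, the $c$-color of the factor of $x$ occupying positions $i$ through $j-1$. Since $c$ takes finitely many values, $\widehat c$ uses finitely many colors. The infinite Ramsey theorem then yields an infinite set $M = \{m_1 < m_2 < m_3 < \cdots\} \subseteq \Nn$ such that every $2$-element subset of $M$ receives the same color, say $\gamma \in C$.

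Next I would read off the desired suffix and factorization from $M$. Let $x'$ be the suffix $x_{m_1} x_{m_1+1} x_{m_1+2}\cdots$ of $x$, and for each $k \geq 1$ put $U_k = x_{m_k} x_{m_k+1}\cdots x_{m_{k+1}-1}$, the factor of $x$ spanning positions $m_k$ through $m_{k+1}-1$. Each $U_k$ is a non-empty word since $m_k < m_{k+1}$, and by construction $x' = U_1 U_2 U_3 \cdots$. Moreover $c(U_k) = \widehat c(\{m_k, m_{k+1}\}) = \gamma$ for every $k$, so this factorization of $x'$ is monochromatic relative to $c$. This completes the argument.

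There is essentially no serious obstacle here; the statement is a direct consequence of Ramsey's theorem once one sets up the bookkeeping correctly. The only point requiring a little care is the translation between factors of $x$ and edges of the complete graph on $\Nn$: one must fix the convention that the edge $\{i,j\}$ with $i<j$ corresponds to the factor occupying the half-open range of positions $[i, j-1]$ (equivalently, the factor read strictly between occurrence $i$ and occurrence $j$), so that consecutive edges $\{m_k, m_{k+1}\}$ concatenate to give exactly the suffix starting at position $m_1$ with no gaps or overlaps. With that convention in place the monochromaticity of the edge-coloring on $M$ transfers verbatim to monochromaticity of the induced factorization, and we are done. (One could alternatively invoke Sch\"utzenberger's Ramsey-free argument from \cite{schutz}, but the Ramsey-theoretic proof is the most transparent.)
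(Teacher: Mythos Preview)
Your proof is correct and follows essentially the same route as the paper's: both induce a finite coloring of the $2$-subsets of the positive integers via $\{i,j\}\mapsto c(x_i\cdots x_{j-1})$, invoke the infinite Ramsey theorem to obtain a monochromatic infinite set, and read off the suffix and its monochromatic factorization from consecutive elements of that set. The only differences are notational.
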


\begin{proof} Let $\Sigma_2(\nats_+)$ denote the set of all two element subsets of the positive integers. Let $x= x_1x_2\cdots \in A^\omega$ and $c: \Ff^+x\rightarrow C$ be any finite coloring  of the non-empty factors of $x.$ Then $c$ induces a finite coloring $c': \Sigma_2(\nats_+)\rightarrow C$ by the following rule: Given $i<j,$ set $c'(\{i,j\})=c(x_i\cdots x_{j-1}).$
By  the infinite Ramsey theorem, there exists an infinite subset ${\cal N}=\{n_1<n_2<n_3<\cdots \}\subseteq \nats_+$ and $r\in C$ such that 
$c'(\{i,j\})=r$ for all $i,j\in {\cal N}$ with $i<j.$  Hence the suffix $x'=x_{n_1}x_{n_1+1}x_{n_1+2}\cdots$ of $x$ admits the following  monochromatic factorization relative to the coloring $c:$ 
\[x'=(x_{n_1}\cdots x_{n_2-1})(x_{n_2}\cdots x_{n_3-1})(x_{n_3}\cdots x_{n_4-1})\cdots. \qedhere \]
\end{proof}

The previous proposition suggests that given an infinite word $x\in A^\omega,$ a good coloring of its factors  relative to the coloring problem should depend in part on $x$ itself and not just on the set of its factors. With this in mind, it is natural to distinguish between prefixes and non prefixes of $x.$

\begin{definition} We say that an infinite word $s\in A^{\omega}$ admits  a {\em prefixal factorization}
if $s$ has a factorization
$$ s= U_1U_2\cdots $$
where each $U_i$, $i\geq 1$, is a non-empty prefix of $s$.
\end{definition}

For example, the {\em Fibonacci}  word $f= abaababaabaab\cdots$ fixed by the morphism $a\mapsto ab,$ $b\mapsto a$ can be written (uniquely) as a concatenation of $ab$ and $a$ and hence admits a prefixal factorization. Moreover, by applying the above morphism, one can produce an infinite number of distinct prefixal factorizations.

On the other hand there exist words which do not admit  prefixal factorizations, e.g., square-free words.
In fact, if a word $s\in A^\omega$ admits a prefixal factorization $s=U_1U_2\cdots,$
then $|U_i|\leq |U_{i+1}|$ for some $i\geq 1$ and hence $s$ must contain a square.

\noindent The following proposition indicates the relevance of prefixal factorizations to the coloring problem.

\begin{proposition}\label{prop:C} If $s\in A^\omega$ does not admit a prefixal  factorization, then $s\in {\cal P}_2$.
\end{proposition}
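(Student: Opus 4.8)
The plan is to exhibit an explicit $2$-coloring witnessing $s\in{\cal P}_2$, namely the one that distinguishes prefixes of $s$ from non-prefixes. Concretely, I would set $C=\{0,1\}$ and define $c:\Ff^+s\to C$ by
\[
c(u)=\begin{cases} 0, & \text{if $u$ is a prefix of $s$};\\ 1, & \text{otherwise}.\end{cases}
\]
This is a finite coloring with $\card(C)=2$, so it suffices to check that no factorization of $s$ is monochromatic relative to $c$.

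So let $s=U_1U_2U_3\cdots$ be an arbitrary factorization of $s$ into non-empty factors. Since $U_1$ is a non-empty prefix of $s$, we have $c(U_1)=0$. Hence, if this factorization were monochromatic, we would have $c(U_i)=0$ for every $i\geq 1$, i.e., every $U_i$ would be a non-empty prefix of $s$; but then $s=U_1U_2U_3\cdots$ would be a prefixal factorization of $s$, contradicting the hypothesis. Therefore every factorization of $s$ contains factors of both colors, and consequently $s\in{\cal P}_2$.

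I do not expect any real obstacle here: the statement is essentially a reformulation of the definitions, the only point worth stating carefully being that the first block $U_1$ of any factorization is forced to be a prefix, which pins its colour and forces a second colour to appear somewhere in any factorization. (If one wishes, one can also remark that both colours are actually attained by $c$: the letter $s_1$ has colour $0$, while the absence of a prefixal factorization in particular rules out $s$ being of the form $a^\omega$, so some factor of $s$ fails to be a prefix and receives colour $1$; but this is not needed for membership in ${\cal P}_2$.)
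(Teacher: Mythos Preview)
Your argument is correct and is exactly the approach taken in the paper: color prefixes with one color and non-prefixes with the other, then note that $U_1$ is always a prefix so a monochromatic factorization would have to be prefixal. The paper's proof is just a one-line version of what you wrote.
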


\begin{proof} It suffices to color each non-empty factor $u$ of $s$ by $c(u)=1$, if $u$ is a prefix of $s$ and $c(u)= 0$, otherwise.
\end{proof}

\noindent Thus ${\cal P}_2$ includes all square-free words.

  \begin{lemma}\label{lemma:B} Let $x\in A^{\omega}$ be an infinite word having a prefixal factorization. Then the first letter of  $x$ is uniformly  recurrent.
\end{lemma}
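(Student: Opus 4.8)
Write $a$ for the first letter of $x$ and fix a prefixal factorization $x=U_1U_2U_3\cdots$ in which each $U_i$ is a non-empty prefix of $x$; put $N=|U_1|$. The plan is to prove the more precise statement that every factor of $x$ of length $N+1$ contains at least one occurrence of $a$. This gives at once that $x\big|_a$ is syndetic, i.e.\ that the first letter $a$ is uniformly recurrent.

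The two elementary facts driving the argument are: (i) since each $U_i$ is a prefix of $x$ it begins with $a$, so each of the \emph{block boundary} positions $1,\ |U_1|+1,\ |U_1U_2|+1,\dots$ carries the letter $a$; and (ii) consequently any $a$-free factor $W$ of $x$ meets no block boundary, hence lies inside a single block $U_m$ and, moreover, does not occupy the first position of $U_m$ (that position carries $a$). Since this forces $W$ to start at relative position $\ge 2$ inside $U_m$, one gets $|U_m|\ge |W|+1$. Combining (ii) with the fact that $U_m$ is \emph{itself} a prefix of $x$, the occurrence of $W$ inside $U_m$ produces a fresh occurrence of the \emph{same} word $W$ inside $x$, but now shifted to a strictly smaller starting position — provided $U_m$ is not the first block, and this is exactly guaranteed once $|W|>N$, because then $|U_m|\ge|W|+1>N=|U_1|$ rules out $m=1$.

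Now suppose, for contradiction, that $a$ is not uniformly recurrent. Then for every $k$ there is an $a$-free factor of $x$ of length $k$; fix one, call it $W$, of length $N+1$. Let $W$ occur in $x$ starting at position $j$ (automatically $j\ge 2$, as $x_1=a\notin W$). By (ii), $W$ lies inside some block $U_m$ and starts at relative position $\ge 2$ in $U_m$, so $|U_m|\ge |W|+1=N+2>N=|U_1|$, whence $m\ge 2$. Writing $s_{m-1}=|U_1|+\cdots+|U_{m-1}|\ \ge\ |U_1|\ \ge\ 1$ and using that $U_m$ is a prefix of $x$, we get an occurrence of $W$ in $x$ starting at position $j'=j-s_{m-1}<j$ (and again $j'\ge 2$). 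Repeating the step with $j'$ in place of $j$ yields an infinite strictly decreasing sequence of positive integers $j>j'>j''>\cdots$, which is absurd. Hence $x$ has no $a$-free factor of length $N+1$, and the lemma follows.

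The only delicate point is step (ii) coupled with the length inequality: one must check both that an $a$-free window cannot begin at the first letter of the block containing it (immediate, since that letter is $a$) and that the window we work with has length strictly greater than $|U_1|$ (arranged by taking $|W|=N+1$), since it is precisely $|U_m|>|U_1|$ that excludes $m=1$ and makes the descent strictly decrease the starting position. The remainder is routine bookkeeping with the block boundaries $s_i=|U_1\cdots U_i|$.
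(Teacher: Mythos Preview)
Your argument is correct and uses essentially the same idea as the paper's proof: an $a$-free window of length $>|U_1|$ must sit strictly inside a single block $U_m$ with $m\ge 2$, and since $U_m$ is itself a prefix of $x$ this produces a strictly earlier occurrence of the same window. The only difference is cosmetic: where you iterate this step into an infinite descent, the paper short-circuits by looking at the \emph{first} occurrence of the factor and deriving an immediate contradiction from the existence of an earlier one; the underlying mechanism is identical.
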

\begin{proof} Let $x=U_1U_2\cdots $ be a prefixal factorization and let $a$ denote the first letter of $x$. Then every factor $u$ of $x$ of length $> |U_1|$  contains an occurrence of $a$. In fact, the first occurrence of $u$ cannot be contained in any $U_i$, $i\geq 1$, and hence this first occurrence of $u$ must either contain some $U_i$ or must overlap two adjacent $U_j$'s. In either case, $a$ occurs in $u$. Thus  the first letter of $x$ is uniformly recurrent.
\end{proof}

A word  $x\in A^{\omega}$ is of  {\em full complexity} if  $A^+= \Ff^+ x$.  We observe that a significant subclass of full complexity words is constituted by the so-called {\em normal words} (see, for instance, \cite{NI}). A word $x\in A^{\omega}$ is normal  if all words of $A^+$ of equal length occur in $x$ with equal (asymptotic) frequency.

\begin{corollary} If $x\in A^{\omega}$ is of full complexity and  $ \card(\alf x)>1$, then $x$ is ${\cal P}_2$.
\end{corollary}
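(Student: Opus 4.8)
The plan is to derive this as an immediate consequence of Proposition~\ref{prop:C} together with Lemma~\ref{lemma:B}. First I would argue that a full complexity word $x$ with $\card(\alf x)>1$ cannot admit a prefixal factorization. Suppose toward a contradiction that $x=U_1U_2\cdots$ is such a factorization, and let $a$ be the first letter of $x$. By Lemma~\ref{lemma:B}, $a$ is uniformly recurrent in $x$; that is, there exists an integer $k$ such that every factor of $x$ of length $k$ contains an occurrence of $a$. But since $x$ has full complexity, $A^+=\Ff^+x$, so in particular every word of length $k$ over $A$ is a factor of $x$. Choosing any letter $b\in\alf x$ with $b\neq a$ — which exists because $\card(\alf x)>1$ — the word $b^k$ is a factor of $x$ of length $k$ containing no occurrence of $a$, a contradiction.

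Hence $x$ does not admit a prefixal factorization, and so by Proposition~\ref{prop:C} we conclude $x\in{\cal P}_2$.

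There is essentially no obstacle here: the only mild point to be careful about is the direction of the quantifiers in the definition of uniform recurrence of a letter (every sufficiently long factor contains the letter), and the fact that full complexity gives us the explicit forbidden factor $b^k$ to contradict it. Everything else is a direct invocation of the two preceding results.

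\begin{proof}
Suppose, for a contradiction, that $x$ admits a prefixal factorization, and let $a$ denote the first letter of $x$. By Lemma~\ref{lemma:B}, $a$ is uniformly recurrent in $x$, so there exists an integer $k$ such that every factor of $x$ of length $k$ contains an occurrence of $a$. Since $\card(\alf x)>1$, there is a letter $b\in\alf x$ with $b\neq a$; and since $x$ is of full complexity, $b^k\in A^+=\Ff^+x$. But $b^k$ is a factor of $x$ of length $k$ with no occurrence of $a$, a contradiction. Therefore $x$ does not admit a prefixal factorization, and by Proposition~\ref{prop:C}, $x\in{\cal P}_2$.
\end{proof}
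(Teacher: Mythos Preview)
Your proof is correct and follows essentially the same approach as the paper's: both argue that full complexity prevents the first letter from being uniformly recurrent, then invoke Lemma~\ref{lemma:B} and Proposition~\ref{prop:C}. You simply spell out the contradiction (via the factor $b^k$) that the paper leaves implicit.
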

\begin{proof} Since $x$ is an infinite word of full complexity and $ \card(\alf x)>1$,  the first letter of $x$ cannot be uniformly recurrent; hence  by Lemma \ref{lemma:B} and Proposition \ref{prop:C}, one has  $x\in {\cal P}_2$.
\end{proof}

\begin{corollary} Almost all words $x\in A^{\omega}$, with $ \card(\alf x)>1$, belong to ${\cal P}_2$.
\end{corollary}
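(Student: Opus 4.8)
The plan is to deduce the corollary directly from the previous one by invoking the classical fact that almost all words (relative to the standard Bernoulli measure) are of full complexity. First I would recall that, by the immediately preceding corollary, every $x\in A^\omega$ with $\card(\alf x)>1$ which is of full complexity belongs to $\mathcal P_2$. So it suffices to show that the set
$$E=\{x\in A^\omega : \alf x = A \text{ and } \Ff^+ x = A^+\}$$
has full measure with respect to the standard Bernoulli measure $\mu_{\mathbf p}$ with $\mathbf p=(\tfrac1d,\dots,\tfrac1d)$, $d=\card(A)$ (and if $d=1$ there is nothing to prove since the hypothesis $\card(\alf x)>1$ is never realized). This is the content of \cite{AS}; I would simply cite it, but if a self-contained argument is wanted it is a short Borel–Cantelli computation.

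Concretely, fix a word $w\in A^+$ of length $m$. For each $k\geq 0$ let $B_k^{w}$ be the event that $w$ does \emph{not} occur in positions $km+1,\dots,(k+1)m$ of $x$; under $\mu_{\mathbf p}$ these events are independent across $k$, each of probability $1-d^{-m}<1$, so $\bigcap_{k\geq 0} B_k^w$ has measure $\lim_{K\to\infty}(1-d^{-m})^K=0$. Hence $\mu_{\mathbf p}$-almost every $x$ contains $w$ as a factor. Taking the union over the countably many $w\in A^+$, we get that $\mu_{\mathbf p}$-almost every $x$ has $\Ff^+x=A^+$; in particular $\alf x = A$, so $E$ has full measure. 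Then for almost every $x$ with $\card(\alf x)>1$ (which forces $d>1$), the previous corollary gives $x\in\mathcal P_2$.

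There is really no obstacle here: the only thing to be careful about is the degenerate case $\card(A)=1$, where the statement is vacuous, and the (routine) verification that the cited genericity result applies to the standard Bernoulli measure as defined in the Preliminaries. The substantive work — showing that full-complexity words with more than one letter lie in $\mathcal P_2$ — has already been done in Lemma~\ref{lemma:B}, Proposition~\ref{prop:C}, and the preceding corollary; this final corollary is just the measure-theoretic packaging.
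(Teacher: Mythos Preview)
Your proof is correct and follows exactly the paper's approach: invoke the well-known fact (cited from \cite{AS}) that almost all words are of full complexity, then apply the preceding corollary. The only difference is that you additionally sketch the standard Borel--Cantelli argument behind the cited fact, which is fine but not needed.
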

\begin{proof} It is well known (see, for instance, \cite[Theorem 10.1.6]{AS})  that relative to the standard Bernoulli measure on $A^{\omega}$, almost all $x\in A^{\omega}$ are of full complexity; hence  the result follows from the preceding corollary.
\end{proof}

\begin{lemma}\label{lemma:preffac} Let  $s\in A^{\omega}$. Then $s$  admits a prefixal factorization if and only if  $s$ begins with only finitely many unbordered prefixes.
\end{lemma}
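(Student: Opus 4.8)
The plan is to prove the two implications separately; both rest on the elementary observation that a bordered word has a \emph{shortest} border and that this shortest border is itself unbordered (any border of a border of $u$ is again a border of $u$, by transitivity of the prefix and suffix relations).

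For the implication ``$s$ admits a prefixal factorization $\Rightarrow$ $s$ has finitely many unbordered prefixes'', I would fix a prefixal factorization $s=U_1U_2\cdots$ and show that \emph{every} prefix $p$ of $s$ with $|p|>|U_1|$ is bordered; since there are at most $|U_1|$ prefixes of $s$ of length $\le|U_1|$, this gives the finiteness. For the claim, let $k\ge 2$ be the index with $|U_1\cdots U_{k-1}|<|p|\le|U_1\cdots U_k|$ and set $q=U_1\cdots U_{k-1}$, a non-empty prefix of $s$ with $|q|<|p|$. Because $p$ ``ends inside'' the $k$-th block, the suffix $w$ of $p$ of length $|p|-|q|$ is a prefix of $U_k$, hence a prefix of $s$, hence (being shorter than $p$) a proper prefix of $p$ as well; thus $w$ is a border of $p$. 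This direction is routine.

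For the converse, assume $s$ has only finitely many unbordered prefixes; let $U$ be this set (non-empty, since the first letter of $s$ lies in it), let $m$ be the maximal length of a word in $U$, and put ${\cal F}=\{\,s_1\cdots s_i : 1\le i\le m\,\}$, a finite set containing $U$. The core step is: for every $N$ the prefix $s_1\cdots s_N$ is a concatenation of elements of ${\cal F}$. Indeed a prefix $p$ with $|p|>m$ cannot be unbordered (it would otherwise lie in $U$), so its shortest border $w$ is unbordered, whence $w\in U\subseteq{\cal F}$, and $p=qw$ with $q$ the prefix of $s$ of length $|p|-|w|\ge 1$. Iterating on $q$ — the lengths strictly decrease — expresses $s_1\cdots s_N$ as $c\,w_r\cdots w_1$ with $c$ a prefix of $s$ of length $\le m$ and each $w_i\in U$, so all blocks lie in ${\cal F}$.

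It then remains to pass from these finite factorizations to an infinite prefixal factorization of $s$. I would do this by K\"onig's lemma applied to the tree whose nodes are the factorizations of prefixes of $s$ into blocks from ${\cal F}$, with an edge from each factorization to each of its one-block extensions: the tree is finitely branching because ${\cal F}$ is finite, and it is infinite because $s_1\cdots s_N$ admits such a factorization — necessarily of depth at least $N/m$ — for every $N$; any infinite branch is a prefixal factorization of $s$. The only genuinely delicate point is this last passage: a naive compactness argument that allowed \emph{all} prefixes of $s$ as blocks would break down, since a node could then have infinitely many children, and it is precisely to avoid this that one must first establish that all blocks can be taken from the finite set ${\cal F}$ (equivalently, that every block after the first can be taken unbordered and of length $\le m$).
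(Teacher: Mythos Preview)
Your proof is correct and follows essentially the same approach as the paper: for the forward direction you both show that any prefix longer than $|U_1|$ is bordered, and for the converse you both factor every finite prefix into prefixes of length at most $m$ (the paper by a direct induction using any border, you by repeatedly stripping off the shortest---hence unbordered---border) and then apply K\"onig's lemma to extract an infinite factorization. The only cosmetic difference is your explicit use of the ``shortest border is unbordered'' observation, which sharpens the factorization slightly but is not needed for the argument.
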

\begin{proof} Suppose that $s$ admits a prefixal factorization $s= U_1U_2\cdots$. Any prefix $u$ of $s$ of length greater than $|U_1|$ is bordered. Indeed,  we can write $u = U_1\cdots U_ku'$ for a suitable $k\geq1$ and $u'$ a proper prefix of $U_{k+1}$ and then of $s$. If $u'\neq \varepsilon$, then $u'$, as $|u'|<|u|$,  is a proper prefix and suffix of $u$, i.e.,  a border of $u$. If $u'=\varepsilon$, then $U_k$ is a border of $u$. Hence, $u$ is bordered.

 Let us now prove the converse. Let $u$ be the longest unbordered prefix of $s$, and put $m=|u|$. Let $S$ be the set of all non-empty prefixes of $s$ of length at most $m$. Then  every prefix $v$ of $s$  can be written as a product  $v=v_1v_2\cdots v_k$ where each $v_i$, $1\leq i \leq k$, is in $S$.
  This is clear if  $|v|\leq m$. If $|v|>m$, then $v$  is bordered so we can write $v=v'v''$ where $v'$ and $v''$ are both non-empty prefixes of $s$. By  induction on the length of $v$, each of $ v'$ and $v''$ is a product of elements of $S$, whence $v$ is a product of elements of $S$.
By the pigeonhole principle, infinitely many prefixes of $s$ begin with the same $U_1$ in $S$. Of those infinitely many begin with the same $U_1U_2$ with $U_2$ in $S$. Of those infinitely many begin with the same $U_1U_2U_3$ with $U_3$ in $S$. Continuing we get $s=U_1U_2U_3 \cdots$ where each $U_i$, $i\geq 1$,
is in $S$ (note that this proof is just an application of the usual K\"onig infinity lemma).
\end{proof}

\noindent Thus we obtain:

\begin{corollary}If $x\in A^\omega$ begins with  an infinite number of unbordered prefixes, then $x\in {\cal P}_2.$
\end{corollary}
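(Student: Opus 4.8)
The plan is to read the statement off as an immediate consequence of Lemma~\ref{lemma:preffac} together with Proposition~\ref{prop:C}; no new work is needed beyond chaining these two facts. First I would apply Lemma~\ref{lemma:preffac} in contrapositive form: the lemma asserts the equivalence ``$s$ admits a prefixal factorization $\iff$ $s$ begins with only finitely many unbordered prefixes''. In particular, the ``only if'' direction contraposed says that if $x$ begins with infinitely many distinct unbordered prefixes, then $x$ does \emph{not} admit a prefixal factorization.

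Having established that $x$ has no prefixal factorization, I would then invoke Proposition~\ref{prop:C} directly: it guarantees that any $s\in A^\omega$ without a prefixal factorization lies in ${\cal P}_2$. Unwinding the proof of that proposition, the explicit $2$-colouring witnessing $x\in{\cal P}_2$ is $c(u)=1$ when $u$ is a prefix of $x$ and $c(u)=0$ otherwise: a factorization $x=U_1U_2\cdots$ cannot be monochromatic of colour $1$ since that would be a prefixal factorization, and it cannot be monochromatic of colour $0$ since $U_1$ is always a prefix of $x$. Hence $x\in{\cal P}_2$, as claimed.

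There is essentially no obstacle in this argument — the corollary is formal once Lemma~\ref{lemma:preffac} and Proposition~\ref{prop:C} are in place; the only point requiring a moment's care is to apply the lemma in the correct (contrapositive) direction. I would also record, as an application, that since every infinite Lyndon word begins with infinitely many distinct unbordered prefixes (noted in Section~\ref{Prel}), this corollary shows at once that every infinite Lyndon word belongs to ${\cal P}_2$.
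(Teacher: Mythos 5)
Your proof is correct and follows exactly the route the paper intends: the corollary is stated immediately after Lemma~\ref{lemma:preffac} as a direct consequence of that lemma (in contrapositive form) combined with Proposition~\ref{prop:C}. The explicit two-colouring you unwind is precisely the one from Proposition~\ref{prop:C}, and your remark about Lyndon words matches the paper's own follow-up observation.
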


\noindent   Lyndon infinite words begin with infinitely many unbordered prefixes, so they belong to   ${\cal P}_2$.

\begin{remark} If $x\in A^{\omega}$ is an aperiodic uniformly recurrent word, then there exists an element $y$ in its shift orbit closure ${\cal S}_x$ which belongs to ${\cal P}_2$. In fact, as $x$ is uniformly recurrent,  one has ${\cal S}_x= \{z \in A^{\omega} \mid \Ff z = \Ff x\}$.
For any linear order on $A$,  let $y$ be the lexicographically smallest element of ${\cal S}_x$ relative to this order. For any $k>0$, ${\cal S}^k(y) \in {\cal S}_x$. Hence, $y$ is lexicographically smaller than all its suffixes, i.e., $y$ is a Lyndon word and hence is  in ${\cal P}_2$.
\end{remark}

\begin{example} The {\em regular paper-folding}  word (see, for instance, \cite{AS})
\[x= 00100110001101100010\cdots\]
is the limit, as $n\rightarrow \infty$,  of the sequence of words $(w_n)_{n>0}$ defined recursively as follows:
$w_1=0$ and  $w_{n+1}= w_n \  0 \  ({\overline w_n})^{\sim}$, where ($^{\sim} $) is the {\em reversal operator}
and $(^{-})$
 the automorphism of $\{0,1\}^*$ defined by ${\bar 0} =1$ and ${\bar 1}= 0$.
 One has  $|w_n|= 2^n-1$ and for any  $0<k < |w_n|$ the prefix $u$ and the suffix  $v$ of $w_n$ of length $k$ are such that $|u|_0>k/2$ and $|v|_0\leq k/2$. Then $x$ has infinitely many unbordered prefixes, so that  $x\in {\cal P}_2$.
\end{example}

\begin{proposition}\label{basic} Let  $s\in A^{\omega}$ be a non-periodic word satisfying either  of the following two conditions:
\begin{enumerate}
\item[(i)] There exists a non-negative integer $t$ such that for each prefixal  factorization $s= U_1U_2 \cdots$ we have  $\liminf_{n\rightarrow \infty}|U_n|\leq t.$
\item[(ii)] Given any prefixal  factorization $s= U_1U_2  \cdots$ there exist  $i,j\geq 0$ such that $U_i$ and $U_j$ terminate with different letters.
\end{enumerate}
Then  $s \in {\cal P}$.
\end{proposition}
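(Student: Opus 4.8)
The plan is to deal with both hypotheses by a single device: in \emph{any} factorization $s=U_1U_2U_3\cdots$ the first block $U_1$ is automatically a non-empty prefix of $s$, so if the coloring assigns to every non-prefix of $s$ a dedicated colour $0$, then no factorization can be monochromatic of colour $0$; it therefore suffices to refine the coloring \emph{on the prefixes of $s$} so as to spoil every \emph{prefixal} factorization, since all other factorizations are already handled. Under (i), I would fix the integer $t$ from the hypothesis, let $F$ be the set of non-empty prefixes of $s$ of length at most $t$ (a finite set, with $\card F=t$ since $s$ is infinite), and define a coloring $c$ that assigns to a non-empty factor $u$ of $s$ the colour $0$ if $u$ is not a prefix of $s$, the colour $u$ itself if $u$ is a prefix of $s$ with $|u|\le t$, and one extra colour $\star$ if $u$ is a prefix of $s$ with $|u|>t$. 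Under (ii), I would instead let $c(u)=0$ if $u$ is not a prefix of $s$, and $c(u)=$ the last letter of $u$ if $u$ is a prefix of $s$. In both cases the colour set ($\{0\}\cup F\cup\{\star\}$ in the first, $\{0\}\cup\alf s$ in the second) is finite.

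The second step is to check that no factorization $s=U_1U_2U_3\cdots$ is monochromatic relative to $c$. The common colour cannot be $0$, because $U_1$ is a non-empty prefix of $s$ and hence $c(U_1)\ne 0$. Under hypothesis (i): if the common colour were some $p\in F$, then $U_i=p$ for every $i$, so $s=p^\omega$ would be periodic, contradicting that $s$ is non-periodic; if the common colour were $\star$, then every $U_i$ is a prefix of $s$ of length $>t$, so $s=U_1U_2\cdots$ is a prefixal factorization with $\liminf_{n\to\infty}|U_n|\ge t+1>t$, contradicting (i). Under hypothesis (ii): if the common colour were a letter $a$, then $s=U_1U_2\cdots$ is a prefixal factorization all of whose blocks terminate with $a$, contradicting (ii). Hence $s\in{\cal P}$ in either case; in fact this yields $s\in{\cal P}_{t+2}$ under (i) and $s\in{\cal P}_{\card(\alf s)+1}$ under (ii).

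I do not anticipate a real obstacle here, only a little bookkeeping, but two points deserve attention. First, the degenerate value $t=0$: then $F=\emptyset$ and the coloring in (i) collapses to the prefix/non-prefix $2$-coloring of Proposition~\ref{prop:C}; the argument still goes through, since a factorization monochromatic of colour $\star$ would be a prefixal factorization with $\liminf_{n\to\infty}|U_n|\ge 1>0$, again contradicting (i). Thus no case split is needed and one never has to decide whether $s$ actually admits a prefixal factorization. Second, the fact that in the short range the colour of a prefix $u$ is taken to be $u$ itself is exactly what forces a factorization that is monochromatic of colour $p$ to be the periodic factorization $p^\omega$; this is where non-periodicity is used, and it is needed only in that subcase, because (i) — unlike (ii) — does not by itself exclude periodic words (a periodic $s=u^\omega$ satisfies (i) with $t=|u|$).
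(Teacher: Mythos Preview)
Your proof is correct and follows essentially the same approach as the paper's: the paper uses the coloring $c(u)=|u|$ for short prefixes, $c(u)=t+1$ for long prefixes, and $c(u)=0$ for non-prefixes under (i), and $c(u)=$ last letter (or a fresh symbol for non-prefixes) under (ii). Your variant of using the short prefix $u$ itself rather than its length $|u|$ as the colour is equivalent, since distinct prefixes of $s$ have distinct lengths; your explicit treatment of the case $t=0$ and the remark on where non-periodicity is actually used are nice touches that the paper leaves implicit.
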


\begin{proof}First suppose $s$ satisfies condition (i) above. Then we  finitely color each non-empty factor $u$ of $s$ as follows:
$$ c(u)= \begin{cases}
|u|, & \text{if $u$ is a prefix of $s$  and $|u|\leq t $}; \\
t+1, & \text{if $u$ is a prefix of $s$ and $|u|\geq t+1$}; \\
0, & \text{otherwise}.
\end{cases}$$
It follows that if $s= U_1U_2\cdots $ is a monochromatic factorization of $s,$ then
each $U_i$ is a prefix and hence $0\leq c(U_i)\leq t$ for each $i\geq 1.$ It follows that $s=U_1^\omega,$ a contradiction. Next suppose $s$ satisfies condition (ii) above and take $b\notin A.$  Then we finitely color each non-empty factor $u$ of $s$ as follows: $c(u)$ is the last symbol of $u$ if $u$ is a prefix of $s,$ and $c(u)=b$ otherwise. Then clearly $s$ does not admit a monochromatic factorization.
\end{proof}

\noindent  Recall that a word $u\in A^\omega$ is called {\it overlap-free} if $u$ does not contain a factor of the form $vva$ where $a$ denotes the initial letter of $v.$ As an immediate corollary we deduce:

\begin{corollary} If  $s\in A^\omega$ is overlap-free, then $s\in {\cal P}$.
\end{corollary}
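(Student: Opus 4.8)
The plan is to deduce the corollary directly from Proposition~\ref{basic} by verifying that every overlap-free word $s$ satisfies condition~(i) with $t=0$ (indeed, condition~(i) with any fixed $t$ would do, but $t=0$ is the cleanest). So the task reduces to a single combinatorial observation about prefixal factorizations of overlap-free words.

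First I would recall that an overlap-free word is in particular aperiodic, hence non-periodic, so the hypothesis of Proposition~\ref{basic} is applicable once condition~(i) is established. Then I would take an arbitrary prefixal factorization $s=U_1U_2U_3\cdots$ and examine consecutive blocks. The key point is that one cannot have $|U_i|\le|U_{i+1}|$: if $U_i$ is a prefix of (or equal to) $U_{i+1}$, then since both $U_i$ and $U_{i+1}$ are prefixes of $s$ and appear consecutively in $s$, the factor $U_iU_{i+1}$ begins with $U_iU_i$, and the letter immediately following this occurrence of $U_iU_i$ in $s$ is the first letter of whatever follows — and since $U_{i+1}$ is at least as long as $U_i$ and is itself a prefix of $s$, the $(|U_i|+1)$-st letter of $U_{i+1}$ equals the first letter of $U_i$ (both being the $(|U_i|+1)$-st letter of $s$ versus the $1$st letter of $s$ — one must be careful here). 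So $s$ contains $U_iU_ia$ with $a$ the initial letter of $U_i$, i.e.\ an overlap, contradicting overlap-freeness. Therefore $|U_i|>|U_{i+1}|$ for every $i\ge1$, forcing the sequence $(|U_n|)_{n\ge1}$ to be strictly decreasing, which is impossible for an infinite sequence of positive integers. Hence an overlap-free word admits \emph{no} prefixal factorization at all, and condition~(i) holds vacuously (with $t=0$).

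Actually, once one sees that there is no prefixal factorization whatsoever, it is cleaner to invoke Proposition~\ref{prop:C} directly rather than Proposition~\ref{basic}, obtaining the stronger conclusion $s\in{\cal P}_2$; this matches the earlier remark in the text that ${\cal P}_2$ includes all square-free words (square-free being a special case of overlap-free). So the proof I would write is: an overlap-free word does not admit a prefixal factorization, because in any such factorization the block lengths would have to strictly decrease, and this is impossible; now apply Proposition~\ref{prop:C}.

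The only delicate step is the verification that $U_i$ being a prefix of $U_{i+1}$ (with $|U_i|\le|U_{i+1}|$) actually produces an overlap in $s$ rather than merely a square. The subtlety is pinning down the letter that follows the square $U_iU_i$: one needs $|U_{i+1}|\ge|U_i|+1$ in the strict case, and a separate (easy) argument when $|U_i|=|U_{i+1}|$, where $U_i=U_{i+1}$ and then the first letter of $U_{i+2}$ (which is the first letter of $s$, hence of $U_i$) supplies the extra letter extending $U_iU_i$ to an overlap — unless the factorization is finite, but a prefixal factorization of an infinite word is infinite. Handling the boundary case $|U_i|=|U_{i+1}|$ uniformly with the strict case is the one place to be careful; everything else is routine.
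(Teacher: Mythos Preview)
Your argument contains a genuine gap in the strict-inequality case $|U_i|<|U_{i+1}|$. You correctly note that $U_iU_{i+1}$ begins with the square $U_iU_i$, but the letter immediately following this square is the $(|U_i|+1)$-st letter of $U_{i+1}$, which (since $U_{i+1}$ is a prefix of $s$) equals $s_{|U_i|+1}$, \emph{not} $s_1$. There is no reason these should coincide, so you have only produced a square, not an overlap. Your equal-length case $|U_i|=|U_{i+1}|$ is fine, but it does not cover everything.

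In fact your overall claim is false: overlap-free words \emph{can} admit prefixal factorizations. The Thue-Morse word $t=abbabaabbaababba\cdots$ is overlap-free and factors as a concatenation of the prefixes $a$, $ab$, $abb$ (for instance $t=abb\cdot ab\cdot a\cdot abb\cdot a\cdot ab\cdot abb\cdot a\cdots$). Here $|U_3|=1<3=|U_4|$, and the factor $U_3U_4=a\cdot abb$ begins with $aa$ followed by $b$; since $aab$ is not an overlap, no contradiction arises. This is also why the paper, immediately after this corollary, places the Thue-Morse word only in ${\cal P}_3$, not ${\cal P}_2$.

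The paper's proof uses condition~(ii) of Proposition~\ref{basic} rather than~(i), and the key trick is to look \emph{backwards} rather than forwards: pick $i\ge 2$ with $|U_i|\le|U_{i+1}|$ and let $a,b$ be the last letters of $U_{i-1},U_i$ respectively. Writing $U_i=V_ib$, the factor $aV_ibV_ib$ sits inside $s$; if $a=b$ this equals $(aV_i)(aV_i)a$, an overlap. Hence $a\neq b$, and the prefixal factorization uses blocks with at least two distinct terminal letters, so condition~(ii) applies. The extra letter needed to promote the square to an overlap comes from the \emph{preceding} block $U_{i-1}$, where its identity is controlled, rather than from inside $U_{i+1}$, where it is not.
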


\begin{proof} Let $s= U_1U_2  \cdots$ be a prefixal factorization. Pick $i\geq 2$ such that $|U_i|\leq |U_{i+1}|.$ Let $a$ and $b$ denote the last letters of $U_{i-1}$ and $U_i$, respectively. Put $U_i=V_ib$ with $V_i\in A^*.$ Then $aV_ibV_ib$ is a factor of $s$. Since $s$ is overlap-free,  it follows that $a\neq b.$ The result now follows from item (ii) of Proposition~\ref{basic}.
\end{proof}

By Proposition \ref{basic}, one derives that for an overlap-free word $s$, we have $s\in{\cal P}_k$ with  $k=$ $\card(\alf s)+1.$ As is well known \cite{LO}, over a binary alphabet $\{a, b\}$ one can construct an infinite overlap-free word by considering the fixed point beginning with $a$ of the morphism $\mu: \{a, b\}^* \rightarrow \{a, b\}^*$, called {\em Thue's morphism}, defined by
$\mu(a)= ab, \mu(b)= ba$. In this way one obtains the famous  {\em Thue-Morse} word:
$  abbabaabbaababba\cdots $. Thus the  Thue-Morse word  belongs to the class ${\cal P}_3$.

\begin{corollary} 
 If an infinite word $s$ has  only finitely many distinct squares, then  $s$  is in ${\cal P}$.
 \end{corollary}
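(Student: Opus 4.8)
The plan is to show that $s$ satisfies hypothesis (i) of Proposition~\ref{basic}, from which $s\in{\cal P}$ follows at once.

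First I would record that a word with only finitely many distinct squares is necessarily non-periodic, so that the standing assumption of Proposition~\ref{basic} is automatically met: if $s=u^{\omega}$ with $u$ primitive, then $u^{2m}$ is a factor of $s$ for every $m\geq 1$, and these are pairwise distinct squares of unbounded length, contradicting the hypothesis. (More generally, every periodic word contains arbitrarily long squares.)

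Next, let $2L$ be an upper bound on the lengths of the finitely many square factors of $s$. I claim that $\liminf_{n\rightarrow\infty}|U_n|\leq L$ for every prefixal factorization $s=U_1U_2\cdots$. Since $(|U_n|)_{n\geq 1}$ is a sequence of positive integers, it cannot be eventually strictly decreasing, so there are infinitely many indices $i$ with $|U_i|\leq|U_{i+1}|$. For any such $i$, both $U_i$ and $U_{i+1}$ are non-empty prefixes of $s$ with $|U_i|\leq|U_{i+1}|$, hence $U_i$ is a prefix of $U_{i+1}$; therefore $U_iU_i$ is a prefix of $U_iU_{i+1}$, which is a factor of $s$ (it occupies the block $U_iU_{i+1}$ inside the factorization $s=U_1\cdots U_{i-1}(U_iU_{i+1})U_{i+2}\cdots$). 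Thus $U_iU_i$ is a square occurring in $s$, so $2|U_i|\leq 2L$, i.e.\ $|U_i|\leq L$. As this holds for infinitely many $i$, the claim follows.

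Applying Proposition~\ref{basic}(i) with $t=L$ then gives $s\in{\cal P}$, which completes the argument. I do not anticipate any genuine obstacle here: the only points requiring a little care are the reduction to the non-periodic case and the extraction of the \emph{uniform} bound $L$ (valid for all prefixal factorizations simultaneously) from the finiteness of the set of squares, both of which are handled above.
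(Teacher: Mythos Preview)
Your argument is correct and follows essentially the same route as the paper: both proofs note that $s$ is non-periodic, find adjacent blocks $U_i,U_{i+1}$ in a prefixal factorization with $|U_i|\leq|U_{i+1}|$ to produce the square $U_i^2$, and thereby verify condition~(i) of Proposition~\ref{basic} with the uniform bound $t=\max\{|v|:v^2\in\Ff^+ s\}$. The only cosmetic difference is that the paper phrases the key step by contradiction while you argue directly.
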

 \begin{proof}Let $n=\max\{|v| \mid  v^2 \in \Ff^+  s \}$. We first observe that $s$ is non-periodic  since it has only finitely many distinct squares. For each prefixal factorization of $s= U_1U_2U_3\cdots$   we have $|U_i|\leq n$  for infinitely many $i$. Indeed, otherwise, there exists  $j$  such that $n< |U_j|\leq |U_{j+1}|$, so that    $U_j^2$ is a factor of $s$, a contradiction. Hence, the result follows by item (i) of Proposition~\ref{basic}.
 \end{proof}
 
\noindent  In a similar way one can prove that an infinite word having only finitely many overlaps is in ${\cal P}$.

\begin{corollary}\label{nonrec} If $s\in A^\omega$ is not recurrent, then $s\in {\cal P}_k$
with $k= |p_s| +2$, where $p_s$ is the shortest non-recurrent prefix of $s$.
\end{corollary}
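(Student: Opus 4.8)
The plan is to deduce this from Proposition~\ref{basic}(i). Write $p=p_s$ and $m=|p|$; since the empty word is trivially recurrent, $m\geq 1$. First I would observe that $s$ is non-periodic, since a periodic word $u^\omega$ is (uniformly) recurrent and hence a non-recurrent word is a fortiori non-periodic --- this is exactly what is needed to apply Proposition~\ref{basic}. Next, if $s$ happens to admit no prefixal factorization at all, then $s\in{\cal P}_2\subseteq{\cal P}_{m+2}$ by Proposition~\ref{prop:C} and we are done; so from now on I may assume $s$ does admit prefixal factorizations.

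The heart of the argument is to show that every prefixal factorization $s=U_1U_2U_3\cdots$ satisfies $\liminf_{n\to\infty}|U_n|\leq m-1$. I would argue by contradiction: if not, there is an index $N$ with $|U_n|\geq m$ for all $n\geq N$, and then each such $U_n$, being a prefix of $s$ of length at least $m$, begins with $p$ (the length-$m$ prefix of $s$). Hence $p$ occurs in $s$ starting at each of the positions $1+\sum_{i<n}|U_i|$ with $n\geq N$, and these positions are pairwise distinct, so $p$ is recurrent. But the minimality of $p_s$ forces the length-$m$ prefix of $s$ to be non-recurrent, a contradiction. This is the one step that genuinely uses the hypothesis, and essentially the only place any care is needed.

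With this claim in hand, $s$ satisfies condition (i) of Proposition~\ref{basic} with $t=m$, so $s\in{\cal P}$; and the colouring exhibited in the proof of that proposition --- colour a prefix $u$ of $s$ by $|u|$ when $|u|\leq m$, by $m+1$ when $|u|\geq m+1$, and colour every non-prefix $0$ --- uses exactly $m+2=|p_s|+2$ colours, so $s\in{\cal P}_{|p_s|+2}$. For completeness one can also check the colouring directly: any monochromatic factorization must consist of prefixes of $s$ (since $U_1$ is always a prefix of $s$, the all-$0$ colour is impossible); if all blocks share a colour in $\{1,\dots,m\}$ then all blocks have the same fixed length and $s$ is periodic, contradicting non-recurrence; and if all blocks have colour $m+1$, hence length $\geq m$, then $p$ recurs, again a contradiction. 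I do not anticipate any serious obstacle: everything reduces to the $\liminf$ estimate above. (In fact $t=m-1$ already works in Proposition~\ref{basic}(i), so one even gets $s\in{\cal P}_{|p_s|+1}$; stating the bound as $|p_s|+2$ is merely a matter of uniformity with Proposition~\ref{basic}.)
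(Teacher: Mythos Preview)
Your argument is correct and follows essentially the same route as the paper: both show that in any prefixal factorization of $s$ all but finitely many blocks have length $<|p_s|$ (since each block of length $\geq |p_s|$ yields an occurrence of the non-recurrent prefix $p_s$), and then invoke item~(i) of Proposition~\ref{basic}. Your additional remark that $t=|p_s|-1$ already suffices, giving the slightly sharper bound $s\in{\cal P}_{|p_s|+1}$, is also correct.
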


\begin{proof}
Since $s$ is non-recurrent, there exists a shortest prefix $ p_s$ which is non-recurrent, i.e., occurring in $s$ only finitely many times. If  $s= U_1U_2 \cdots $  is a prefixal factorization, then for all but finitely many $i$
we have $|U_i| < |p_s|$. The result now follows from item (i) of Proposition~\ref{basic}.
\end{proof}

\begin{proposition}\label{orbit} Let $x\in A^\omega.$ Then  $x \in {\cal P}$ if and only if $ax \in {\cal P}$ for each $a\in A.$
\end{proposition}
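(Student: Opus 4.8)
The plan is to prove, for a fixed letter $a\in A$, the two one‑letter statements $x\in{\cal P}\Rightarrow ax\in{\cal P}$ and $ax\in{\cal P}\ \text{for all }a\in A\ \Rightarrow x\in{\cal P}$ (the second in contrapositive form $x\notin{\cal P}\Rightarrow ax\notin{\cal P}$ for some $a$); together these give the proposition. Both rest on the same correspondence: every factorization $ax=U_1U_2\cdots$ has $U_1=av_1$ with $v_1$ a (possibly empty) prefix of $x$, and deleting the leading $a$ yields the factorization $x=v_1U_2U_3\cdots$ (read $x=U_2U_3\cdots$ if $v_1=\varepsilon$); conversely every factorization of $x$ arises this way. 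The only delicate feature is the single block carrying the extra letter $a$.

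For $x\in{\cal P}\Rightarrow ax\in{\cal P}$: if $ax$ admits no prefixal factorization then $ax\in{\cal P}_2$ by Proposition~\ref{prop:C}, so assume it does (in particular, by Corollary~\ref{nonrec} we may also assume $ax$ is recurrent). Fix a colouring $c:\Ff^+x\to C$ witnessing $x\in{\cal P}$ and define $c'$ on $\Ff^+(ax)$ so that $c'(u)$ records: a bit saying whether $u$ is a prefix of $ax$; the value $c(u)$ when $u\in\Ff^+x$ (a dummy otherwise); and, when $u=av$ is a non-empty prefix of $ax$, the value $c(v)$ if $v\neq\varepsilon$ and a second dummy if $u=a$. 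Given a hypothetical $c'$‑monochromatic factorization $ax=U_1U_2\cdots$, the first block is a prefix of $ax$, so the "prefix" bit is constant $=1$ and hence \emph{every} $U_i$ is a prefix of $ax$; thus $ax=U_1U_2\cdots$ is a prefixal factorization. If the last coordinate equals the "$u=a$" dummy, every $U_i=a$, so $ax=a^{\omega}$ and $x=a^{\omega}$ is purely periodic, impossible. If instead some $U_1=av_1$ with $v_1=\varepsilon$ fails, one splits into: either $U_1=a$ and $x=U_2U_3\cdots$ is $c$‑monochromatic, or $U_1=av_1$ with $v_1\neq\varepsilon$ and $x=v_1U_2U_3\cdots$ is $c$‑monochromatic — both contradicting that $c$ witnesses $x\in{\cal P}$. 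The one configuration the first two coordinates cannot immediately kill is: $U_i=aW_i$ for all $i$ with each $W_i$ a non-empty prefix of $x$, $c$ constant on $\{W_i\}$ and $c(a\cdot)$ constant on $\{aW_i\}$; equivalently $x=W_1aW_2aW_3\cdots$ with these constancy properties.

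Ruling out this last configuration is, I expect, the main obstacle. The key structural fact is that the partial products $P_k=W_1aW_2a\cdots aW_k$ are all prefixes of $x$, that $|P_k|\to\infty$, and that $P_{k+1}=P_k\,a\,W_{k+1}$ with $W_{k+1}$ again a prefix of $x$; so each $P_k$ forces $x$ to coincide with a shift of itself on an initial segment, i.e. $x$ is strongly self-overlapping. The plan is a case split on whether the lengths $|W_i|$ are bounded. If they are bounded, $x$ is a concatenation of finitely many block-types $W_ia$, and combining this with the self-overlap constraints one is driven either to a purely periodic $x$ (impossible) or, after regrouping the blocks, to a $c$‑monochromatic factorization of $x$. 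If the lengths are unbounded, the nested prefixes $P_k$ together with constancy of $c$ on the $W_i$ produce a monochromatic factorization $x=P_{k_0}(aW_{k_0+1})(aW_{k_0+2})\cdots$ for suitable $k_0$. Either way $c$ would not witness $x\in{\cal P}$, a contradiction; hence $c'$ is a valid witness and $ax\in{\cal P}$.

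For the converse I would argue the contrapositive, prepending $a=x_1$ and showing $x_1x\notin{\cal P}$. Given any finite colouring $c$ of $\Ff^+(x_1x)$, restrict it to $\Ff^+x$; if this restriction is already good we contradict $x\notin{\cal P}$, so it has a monochromatic factorization $x=V_1V_2\cdots$. Prolonging it to $x_1x=(x_1V_1)V_2V_3\cdots$ (or, when needed, to $x_1x=(x_1)(V_1)(V_2)\cdots$ or $x_1x=(x_1V_1\cdots V_j)V_{j+1}\cdots$, whose first block is again governed by the "$a$‑heads‑the‑first‑block'' bookkeeping) yields a $c$‑monochromatic factorization of $x_1x$. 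To make the prolongation robust I would actually apply $x\notin{\cal P}$ not to $c\restriction\Ff^+x$ itself but to the refinement $u\mapsto(c(u),c(x_1u))$ of it, so that the colour of the absorbing block $x_1V_1$ is controlled; the residual ambiguity — exactly the same "$a$ leads every block'' phenomenon as above — is the point where care is needed, and it is handled by the identical analysis. Since for each $a$ the word $ax$ lies in ${\cal P}$, the case $a=x_1$ already gives the contradiction, completing the proof.
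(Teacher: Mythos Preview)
Your proof has a genuine gap in precisely the ``problematic configuration'' you yourself isolate: $ax=(aW_1)(aW_2)\cdots$ with each $W_i$ a non-empty prefix of $x$, and $c$ constant on $\{W_i\}$ and on $\{aW_i\}$. Neither branch of your case split closes. In the unbounded case you claim $x=P_{k_0}(aW_{k_0+1})(aW_{k_0+2})\cdots$ is $c$-monochromatic ``for suitable $k_0$'', but you have no control over $c(P_{k_0})$: the common colour $\gamma$ of the $aW_i$ need not be attained by any $P_k$. In the bounded case, a concatenation of finitely many block types $W_ia$ need not be periodic (think Sturmian), and ``regrouping'' produces blocks like $aW_iaW_j$ whose $c$-colour is completely unconstrained by your data. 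The same defect recurs in your converse: recording $(c(u),c(x_1u))$ still leaves the first block $x_1V_1$ with a colour possibly different from the common $c(V_i)$, so the factorization you build is not monochromatic.

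The missing idea is a \emph{conjugation} rather than a deletion. Observe that in your bad configuration each $W_ia$ is also a prefix of $x$ (since $aW_ia$ is a prefix of $ax$), and $x=(W_1a)(W_2a)(W_3a)\cdots$; so the ``right'' colouring of a prefix $u$ of $ax$ is $c(a^{-1}ua)$, not $c(a^{-1}u)$. Concretely: set $c'(u)=c(a^{-1}ua)$ when $u$ is a prefix of $ax$ with $ua\in\Ff^+ax$, $c'(u)=*$ when $u$ is a prefix with $ua\notin\Ff^+ax$, and $c'(u)=\diamondsuit$ otherwise. A $c'$-monochromatic factorization $ax=U_1U_2\cdots$ forces every $U_i$ to be a prefix followed by $a$ (since $U_{i+1}$ begins with $a$), and then $x=(a^{-1}U_1a)(a^{-1}U_2a)\cdots$ is $c$-monochromatic---contradiction, with no residual case. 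The converse is symmetric: colour a prefix $u$ of $x$ ending in $a$ by $(a,\,c_a(aua^{-1}))$; a monochromatic factorization of $x$ then has all blocks ending in the same letter $a$, and $ax=(aU_1a^{-1})(aU_2a^{-1})\cdots$ is $c_a$-monochromatic. This cyclic shift of the distinguished letter through each block is the device your argument lacks.
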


\begin{proof}First suppose $x\in {\cal P}$ and let $a\in A.$ Let $c': \Ff ^+x\rightarrow C'$ be a finite coloring of $\Ff^+x$ relative to which $x$ does not admit a monochromatic factorization. Let $*$ and $\diamondsuit$  be symbols not in $C'$ and set $C=C'\cup \{*, \diamondsuit \}.$ We define a coloring $c: \Ff ^+ax\rightarrow C$ as follows: If $u$ is a prefix of $ax$ and $ua \in \Ff^+ax$, then set $c(u)=c'(a^{-1}ua).$ If $u$ is a prefix of $ax$ and $ua \notin \Ff^+ax$, then set $c(u)=*.$ If $u$ is not a prefix of $ax,$ set $c(u)=\diamondsuit.$ Suppose that a monochromatic factorization of $ax$ exists. Then it is given by $ax=U_1U_2U_3\cdots$, where for each $i\geq 1,$ we have that $U_i$ is a prefix of $ax$ and $U_ia\in \Ff^+ax.$ Thus $c'(a^{-1}U_ia)=c'(a^{-1}U_ja)$ for each $i,j\geq 1.$ But this gives rise to the monochromatic factorization $x=a^{-1}U_1a\cdot a^{-1}U_2a\cdot a^{-1}U_3a \cdots,$ a contradiction. Thus $ax \in {\cal P}.$

Next suppose that $ax \in {\cal P}$ for each $a\in A.$ Then for each $a\in A$ there exists a finite coloring
$c_a:\Ff^+ax \rightarrow C_a$ relative to which $ax$ does not admit a monochromatic factorization.
Set $C=A\times \bigcup _{a\in A }C_a \cup\{*\}$ with  $* \not\in \bigcup _{a\in A }C_a$.  Then we define a coloring $c: \Ff^+x\rightarrow C$ as follows: Let  $u$ be a non-empty factor of $x,$ and let $a\in A$ denote the last symbol of $u.$ If $u$ is a prefix of $x$  then set $c(u)=(a, c_a(aua^{-1})).$ In all other cases set $c(u)=*.$ Thus if $x=U_1U_2U_3\cdots$ is a  monochromatic factorization relative to $c,$ then each $U_i$ is a prefix of $x$ terminating with  the same letter $a\in A,$ and $c_a(aU_ia^{-1})=c_a(aU_ja^{-1})$ for each $i,j\geq 1.$ This implies that
$ax=aU_1a^{-1}\cdot aU_2a^{-1}\cdot aU_3a^{-1}\cdots$ is a monochromatic factorization of $ax,$ a contradiction. Thus $x$ does not admit a monochromatic factorization with respect to the coloring $c,$ and hence $x\in {\cal P}.$
\end{proof}

\noindent Combining Proposition~\ref{prop:C} and Proposition~\ref{orbit} we obtain:

\begin{corollary}\label{lemma:N1}Let $x\in A^\omega$. Suppose there exists a positive integer $n$ such that for each $u\in A^n$ the word $ux$ does not admit a prefixal factorization. Then  $x \in {\cal P}$.
\end{corollary}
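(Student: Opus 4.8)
The plan is to deduce Corollary~\ref{lemma:N1} directly from the two results it is said to combine, namely Proposition~\ref{prop:C} and Proposition~\ref{orbit}, with a short induction on $n$. The hypothesis gives a positive integer $n$ such that $ux$ admits no prefixal factorization for every $u\in A^n$. By Proposition~\ref{prop:C}, every such word $ux$ lies in ${\cal P}_2\subseteq{\cal P}$. So I would first record: for all $u\in A^n$, $ux\in{\cal P}$.

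Next I would climb down from length $n$ to length $0$. The key engine is Proposition~\ref{orbit}, which says $y\in{\cal P}$ if and only if $ay\in{\cal P}$ for every $a\in A$; in particular, if $ay\in{\cal P}$ for all $a\in A$ then $y\in{\cal P}$. Suppose inductively that for some $k$ with $1\le k\le n$ we know $vx\in{\cal P}$ for every $v\in A^{k}$. Fix $w\in A^{k-1}$. For each $a\in A$, the word $aw$ has length $k$, so $(aw)x=a(wx)\in{\cal P}$ by the inductive hypothesis; since this holds for every $a\in A$, Proposition~\ref{orbit} yields $wx\in{\cal P}$. Thus $vx\in{\cal P}$ for every $v\in A^{k-1}$, completing the induction step. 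Starting from the base case $k=n$ established in the previous paragraph and iterating down to $k=0$ (where $A^{0}=\{\varepsilon\}$), we conclude $\varepsilon x = x\in{\cal P}$, as desired.

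I do not anticipate any real obstacle here: the statement is essentially a formal consequence of the two propositions, and the only thing to be careful about is the bookkeeping of the induction, i.e.\ making sure the "only if" direction of Proposition~\ref{orbit} is the one being invoked and that the alphabet $A$ being finite is what makes the coloring in Proposition~\ref{orbit} finite at each step (this is already built into that proposition). One could alternatively phrase the whole argument non-inductively by directly constructing a finite coloring of $\Ff^+x$ that, for each $u\in A^n$, encodes both the length-$\le n$ prefix structure and the "prefix of $s$ vs.\ not" colorings coming from the words $ux$; but the inductive route via Proposition~\ref{orbit} is cleaner and avoids re-deriving the coloring bookkeeping already done in its proof. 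A minor remark worth including is that this also gives a bound on the number of colors needed, obtained by unwinding the $n$-fold application of the color-set enlargement in Proposition~\ref{orbit} starting from the $2$ colors of Proposition~\ref{prop:C}, though the precise bound is not needed for the statement.
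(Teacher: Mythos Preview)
Your argument is correct and is exactly the intended proof: the paper gives no explicit proof, merely stating that the corollary follows by combining Proposition~\ref{prop:C} with Proposition~\ref{orbit}, and your downward induction on the length of the prepended word is precisely the way to effect that combination. One small slip: the direction of Proposition~\ref{orbit} you invoke (``if $ay\in{\cal P}$ for all $a\in A$ then $y\in{\cal P}$'') is the \emph{if} direction, not the \emph{only if} direction.
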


\noindent As a special case of the above we have:

\begin{corollary} \label{cor:N1} Let $x\in A^\omega.$ Suppose there exists a subset $A'\subseteq A$ with $\card(A')\geq 2,$ such that for each $b\in A'$ the word $x$ begins with an infinite number of prefixes of the form $Ub$ with $U$ a palindrome. Then  for each $a\in A$ we have that $ax$ does not admit a prefixal factorization and hence $x\in {\cal P}$.
\end{corollary}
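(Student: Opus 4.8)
The plan is to show directly that for any $a\in A$, the word $ax$ has only finitely many unbordered prefixes, and then invoke Lemma~\ref{lemma:preffac} together with Corollary~\ref{lemma:N1} (or rather Proposition~\ref{orbit} via Proposition~\ref{prop:C}). Actually, the cleanest route is to apply Corollary~\ref{lemma:N1} with $n=1$: it suffices to prove that $ax$ admits no prefixal factorization for every single letter $a\in A$. So fix $a\in A$ and suppose, for contradiction, that $ax=U_1U_2\cdots$ is a prefixal factorization, so each $U_i$ is a non-empty prefix of $ax$; in particular each $U_i$ begins with the letter $a$.

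The key idea is to exploit the two distinct letters $b_1,b_2\in A'$ guaranteed by the hypothesis. Since $x$ begins with infinitely many prefixes of the form $U b_1$ with $U$ a palindrome, and likewise infinitely many of the form $Vb_2$ with $V$ a palindrome, the word $ax$ begins with infinitely many prefixes of the form $aUb_1$ and infinitely many of the form $aVb_2$, where $U,V$ are palindromes. First I would argue that such prefixes $aWb$ (with $W$ a palindrome, $b$ a letter) are unbordered \emph{provided} $b\neq a$: a non-empty border $z$ of $aWb$ is both a prefix and a suffix, so $z$ starts with $a$ and ends with $b$; if additionally $|z|\ge 2$ one gets information from the palindromic structure of the interior. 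The cleanest observation: if $aWb$ had a proper border $z$, then reversing, since $(aWb)^{\sim}=bW^{\sim}a=bWa$, the word $z^{\sim}$ is a border of $bWa$; playing prefix/suffix against the palindrome $W$ forces the first and last letters to coincide, i.e. $a=b$. Hence whenever $b_1\neq a$ we already get infinitely many unbordered prefixes of $ax$, so by Lemma~\ref{lemma:preffac} it admits no prefixal factorization, and similarly if $b_2\neq a$. Since $b_1\neq b_2$, at least one of them differs from $a$, and we are done.

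Let me re-examine the claim that $aWb$ with $W$ a palindrome and $b\ne a$ is unbordered, since this is the heart of the matter. Suppose $z$ is a proper border, $0<|z|<|aWb|=|W|+2$. Write $aWb=z\alpha=\beta z$ for non-empty $\alpha,\beta$. If $|z|=1$ then $z=a$ (prefix) and $z=b$ (suffix), contradiction. If $|z|\ge 2$, write $z=as$ as a prefix of $aWb$, so $s$ is a prefix of $Wb$; and $z=tb$ as a suffix, so $t$ is a suffix of $aW$, i.e. $t^{\sim}$ is a prefix of $(aW)^{\sim}=W^{\sim}a=Wa$. Now $z=as=tb$ has the same length on both sides; the key point is to show the letter before the terminal $b$ in $z$ equals $a$ (forcing a shorter border, and ultimately $a=b$). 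This requires a short induction on $|z|$, or better, the standard fact that a word of the form $a W b$ with $W$ a palindrome and $a\ne b$ is unbordered — which is essentially the statement that such words are the ``central words plus distinct bispecial extensions'' that are well known to be unbordered in the Sturmian/episturmian literature. I would prove it by a minimal-counterexample argument: take a shortest border $z$; shortest borders are unbordered; $z$ is simultaneously a prefix and suffix of the palindrome-framed word, and comparing $z$ with $z^{\sim}$ (which is a border of the reversal $bWa$) yields $z$ overlapping $W$ in a way that makes $W$ dictate $a=b$.

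The main obstacle, then, is exactly this combinatorial lemma: \emph{if $W$ is a palindrome and $a,b$ are letters with $a\ne b$, then $aWb$ is unbordered}. Everything else — passing from ``infinitely many unbordered prefixes'' to ``no prefixal factorization'' (Lemma~\ref{lemma:preffac}), and from ``$ux$ has no prefixal factorization for all short $u$'' to ``$x\in{\cal P}$'' (Corollary~\ref{lemma:N1}, applied with $n=1$) — is already available in the excerpt. One subtlety to handle carefully is the degenerate case $W=\varepsilon$: then $aWb=ab$ with $a\ne b$ is trivially unbordered, so the argument still goes through. I would therefore structure the final proof as: (1) state and prove the unbordered-palindrome-frame lemma; (2) deduce that for whichever of $b_1,b_2$ is $\ne a$, the word $ax$ has infinitely many unbordered prefixes; (3) conclude via Lemma~\ref{lemma:preffac} that $ax$ has no prefixal factorization for every $a\in A$; (4) apply Corollary~\ref{lemma:N1}.
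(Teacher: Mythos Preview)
Your approach is essentially identical to the paper's: pick $b\in A'$ with $b\neq a$, observe that $ax$ then has infinitely many prefixes $aUb$ with $U$ a palindrome, argue these are unbordered, and conclude via Lemma~\ref{lemma:preffac} and Corollary~\ref{lemma:N1} with $n=1$. The paper simply asserts the unborderedness step without proof, whereas you correctly isolate it as the only nontrivial point.

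One remark: the ``palindrome-frame'' lemma is more immediate than your discussion suggests, and no minimal-counterexample or reversal trick is needed. Write $W=w_1\cdots w_n$ and suppose $z$ is a proper border of $aWb$ with $|z|=k$. The case $k=1$ gives $a=b$ at once. For $2\le k\le n+1$, comparing the first letter of the length-$k$ suffix with the last letter of the length-$k$ prefix gives $a=w_{n-k+2}$ and $w_{k-1}=b$; since $W$ is a palindrome, $w_{n-k+2}=w_{k-1}$, whence $a=b$. This two-line argument replaces the induction you sketch.
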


\begin{proof}We note that for each letter $a\in A,$ the word $ax$ does not admit a prefixal factorization. In fact, by hypothesis, for each $a\in A,$ there exists  $b\neq a$ such that $ax$ begins with  an infinite number of distinct prefixes of the form $aUb$ with $U$ a palindrome. It follows that $ax$ begins with arbitrarily long unbordered prefixes and hence, by Lemma~\ref{lemma:preffac}, does not admit a prefixal factorization. The result now follows from Corollary~\ref{lemma:N1} taking $n=1.$
\end{proof}

\begin{remark}\label{rem:eli}  An infinite word $s$ over the alphabet $A$ is called {\em standard episturmian} if it is closed under reversal and every left special factor of $s$ is a prefix of $s$. It is well known that every non-periodic standard episturmian word satisfies the hypothesis of Corollary~\ref{cor:N1} (see, for instance \cite{DJP,JP}). Hence every non-periodic standard episturmian word is in ${\cal P}$ (a different and longer proof of this latter result is in \cite{adleplz}). It can be shown that any suffix of a standard episturmian word is also in ${\cal P}$ (see Section \ref{Sec:sw}). Combined with Proposition~\ref{orbit} it follows that each word which is  in the same shift orbit of a standard episturmian word belongs to ${\cal P}.$
\end{remark}

\begin{remark}\label{rem:pdw} The {\em period-doubling} word
 $$x=0100010101000100\cdots$$
 is  defined as the fixed point $\tau^{\omega}(0)$ of the morphism $\tau$ given by $0\mapsto 01,$ $1\mapsto 00$ (see, for instance, \cite{AS}).
 The word $x$ begins with  an infinite number of distinct prefixes of the form $U0$ and $U1$ with $U$ a palindrome. In fact,
 letting $a\in \{0, 1\}$, we see that  if $Ua$ is a prefix of $x$ with $U$ a palindrome, then $\tau(U)0$ is a palindromic prefix of $x$ immediately followed by ${\bar a}$. Thus for $a\in \{0, 1\}$, $ax$ does not admit a prefixal factorization. By Corollary \ref{lemma:N1}, it follows
 that $x\in {\cal P}$.
\end{remark}

\section{Invariance properties of ${\cal P}$}\label{sec:ipp}

The following proposition describes a fundamental  invariance property of ${\cal P}$  having many applications to  the coloring problem.
\begin{proposition}\label{invariance} Let $s$ and $t$ be infinite words and  $h$ be a morphism such that
$t= h(s)$.  If $t \in {\cal P}$, then $s \in {\cal P}$.
\end{proposition}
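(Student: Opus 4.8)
The statement is: if $t = h(s)$ for a morphism $h$, and $t \in \mathcal{P}$, then $s \in \mathcal{P}$. The idea is to take a good finite coloring $c$ of $\Ff^+ t$ — one relative to which $t$ admits no monochromatic factorization — and use it to manufacture a good finite coloring $c'$ of $\Ff^+ s$. The natural guess is to send a non-empty factor $u$ of $s$ to the color $c(h(u))$; but $h(u)$ could be empty (if $h$ erases $u$), and more seriously, a factorization of $s$ pushes forward under $h$ to a factorization of $t$ only into the blocks $h(U_i)$, some of which may be empty, so monochromaticity downstairs does not obviously transfer. I would handle this by enriching the coloring with a little extra bookkeeping, as in the proofs of Proposition~\ref{orbit}.

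First I would dispose of trivial cases: if $h(a) = \varepsilon$ for the first letter $a$ of $s$, or more generally if $h$ is "essentially erasing" in a way that makes $h(s)$ fail to be infinite, then $t = h(s)$ would not be an infinite word, contradicting the hypothesis; so we may assume $h(s)$ is genuinely infinite, which forces $h(u) \neq \varepsilon$ for every sufficiently long factor $u$ of $s$, and in particular $h$ does not erase every letter occurring in $s$. Then, given the good coloring $c : \Ff^+ t \to C$, pick a symbol $\ast \notin C$ and define $c' : \Ff^+ s \to C \cup \{\ast\}$ by $c'(u) = c(h(u))$ when $h(u) \neq \varepsilon$ and $c'(u) = \ast$ otherwise. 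Now suppose, for contradiction, that $s = U_1 U_2 U_3 \cdots$ is monochromatic for $c'$. If the common color is $\ast$, then every $U_i$ is erased by $h$, so $h(s) = \varepsilon$, contradicting that $t$ is infinite. So the common color lies in $C$, meaning $h(U_i) \neq \varepsilon$ for all $i$ and $c(h(U_1)) = c(h(U_2)) = \cdots$. But then $t = h(s) = h(U_1) h(U_2) h(U_3) \cdots$ is a factorization of $t$ into non-empty blocks all of the same $c$-color — a monochromatic factorization of $t$, contradicting the choice of $c$. Hence $s$ admits no monochromatic factorization for $c'$, so $s \in \mathcal{P}$.

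The main point to get right — and the step I expect to require the most care — is verifying that each $h(U_i)$ really is a non-empty \emph{factor} of $t$ (so that $c(h(U_i))$ is defined): since $U_i$ is a factor of $s$, $h(U_i)$ is a factor of $h(s) = t$, and non-emptiness is exactly what the monochromatic-color-in-$C$ assumption guarantees. One should also be slightly careful that the factorization $t = \prod_i h(U_i)$ genuinely exhausts $t$ and does not secretly terminate, but this is automatic from $s = \prod_i U_i$ together with $h$ being a monoid morphism and $t = h(s)$ being infinite. No assumption that $h$ is non-erasing, injective, or uniform is needed; the single symbol $\ast$ absorbs all the erased factors and the contradiction in the all-$\ast$ case uses only that $t$ is infinite.
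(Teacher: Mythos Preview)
Your proof is correct and follows essentially the same approach as the paper: pull back the coloring via $h$, adding a single extra color for factors that $h$ erases, and observe that a monochromatic factorization of $s$ would push forward to one of $t$ (the all-erased case being impossible since $t$ is infinite). Your write-up is in fact a bit more explicit than the paper's about why the extra-color case cannot occur.
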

\begin{proof}  By hypothesis, there exists a finite coloring $c_t: \Ff^+ t \rightarrow C$ such that  no factorization $ t = V_1V_2  \cdots$
is monochromatic.
Let $e$ be a symbol not in $C$.  We  consider  the coloring map $c_s: \Ff^+ s\rightarrow C\cup\{e\}$ defined as follows:  for any  $U\in \Ff^+ s$, we set $c_s(U)= c_t(h(U))$ if $h(U)\neq \varepsilon$ and $c_s(U)= e$ otherwise.  It follows that no factorization of $s$ is monochromatic. In fact,  if $s = U_1U_2 \cdots $ is a monochromatic factorization (with respect to $c_s$), since $t= h(s)= h(U_1)h(U_2)\cdots, $ one has $c_s(U_i)\neq e$ for each $i$ and hence $h(U_1)h(U_2)\cdots $ would be a monochromatic factorization of $t$ (with respect to $c_t$), a contradiction.\end{proof}

\begin{remark}\label{rem:lcolo}  Following the proof of the preceding proposition, if $t\in {\cal P}_k$, then $s\in {\cal P}_{k+1}$. If the morphism $h$ is non-erasing, then $s\in {\cal P}_k$.
\end{remark}

\begin{remark}\label{rem:bin}  If ${\cal P}$ contains all binary non-periodic words, then it contains all non-periodic words over any finite alphabet $A.$ Indeed, if $x\in A^\omega$ is non-periodic, then  there exists $a\in A$ such that $h_a(x)\in \{0,1\}^\omega$ is non-periodic, where $h_a:A^*\rightarrow \{0,1\}^*$ is the morphism  defined as follows:  for each $t\in A,$ we set  $h_a(t)=1$ if $t=a$ and $h_a(t)=0$ if $t\neq a.$
\end{remark}

\noindent  Some applications of Proposition \ref{invariance} are given by the two following propositions:

\begin{proposition} Let  $t \in A^\omega$. For each $a \in A$, let $B_a$ be a finite set such that $B_a$ and $B_b$ are disjoint whenever $a\neq b$. For each $a\in A$, let  $x(a)$ be any infinite word over the alphabet  $B_a$  and $s$ be the word obtained from $t $ by replacing for each $a\in A$, the subsequence of $a$'s  in $t $ by $x(a)$. If  $t \in {\cal P}$, then
 $s \in {\cal P}$.
\end{proposition}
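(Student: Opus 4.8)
The plan is to realize $s$ as a morphic preimage of $t$ and then invoke Proposition~\ref{invariance}. The key observation is that, although the operation "replace the subsequence of $a$'s by $x(a)$" spreads each letter of $t$ out over potentially many positions, it can be inverted letterwise: reading $s$ one letter at a time, each symbol belongs to exactly one set $B_a$ (here the disjointness hypothesis on the $B_a$ is essential), so the map $h:\bigl(\bigcup_{a\in A}B_a\bigr)^*\to A^*$ sending every letter of $B_a$ to $a$ is a well-defined non-erasing morphism. I claim $h(s)=t$. Indeed, if we write $s$ by interleaving the words $x(a)$ according to the positions of the $a$'s in $t$, then applying $h$ collapses each block coming from $x(a)$ back to a string of $a$'s of the appropriate length, and the interleaving pattern is exactly that of $t$; so $h(s)$ reproduces $t$ letter by letter.

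The steps, in order, would be: (1) set $B=\bigcup_{a\in A}B_a$ and, using disjointness, define $h:B^*\to A^*$ by $h(b)=a$ for the unique $a$ with $b\in B_a$; note $h$ is non-erasing. (2) Make precise the claim $s\in B^\omega$ and $h(s)=t$, by describing the positions: if $t=t_1t_2\cdots$ and, for each $a$, the occurrences of $a$ in $t$ are at positions $p_1^{(a)}<p_2^{(a)}<\cdots$, then position $p_k^{(a)}$ of $s$ carries the $k$-th letter of $x(a)$; applying $h$ to that letter returns $a=t_{p_k^{(a)}}$, and these positions partition $\nats_+$, so $h(s)=t$. (3) Since $t\in\mathcal P$ by hypothesis, Proposition~\ref{invariance} immediately yields $s\in\mathcal P$. (One could add, via Remark~\ref{rem:lcolo}, that $t\in\mathcal P_k$ gives $s\in\mathcal P_k$ because $h$ is non-erasing, though the statement only asks for $\mathcal P$.)

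The only genuinely delicate point is the bookkeeping in step (2): one must be careful that the "subsequence of $a$'s" is infinite for every $a$ that occurs infinitely often in $t$ — which it is, since otherwise $x(a)$, being infinite, could not fit — and conversely that if some letter $a$ occurs only finitely (or not at all) in $t$, the substitution simply does nothing for that $a$ and $x(a)$ is irrelevant; implicitly the construction presumes each $x(a)$ is actually used, i.e.\ each $a$ occurs infinitely often in $t$, and under that reading the positions $\{p_k^{(a)}:a\in A,\ k\ge 1\}$ genuinely exhaust $\nats_+$. Making this partition explicit (or simply observing that $h$ applied to the $n$-th letter of $s$ returns the letter of $t$ governing that position) is routine but is the heart of the verification that $h(s)=t$. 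Everything else is a direct appeal to the already-established invariance property.
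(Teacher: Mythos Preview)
Your proposal is correct and follows exactly the paper's approach: define $h:B^*\to A^*$ by $h(b)=a$ for the unique $a$ with $b\in B_a$, observe $h(s)=t$, and invoke Proposition~\ref{invariance}. The paper's proof is just a terser version of yours, omitting the positional bookkeeping and the remark on non-erasingness that you spell out.
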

\begin{proof}
In fact, let  $B= \bigcup_{a\in A}B_a$ and $h$ be the morphism of $B^*$ to $A^*$ defined as follows: for  $b\in B_a$, we set  $h(b)= a$
for each $a\in A$.  Then $h(s)=t$ and the result follows from Proposition~\ref{invariance}.
\end{proof}

\begin{example} Let  $s$  be  obtained from the Thue-Morse  word $abbabaabbaababba \cdots$  by replacing the subsequence of $a$'s  by the Fibonacci word  $01001010010\cdots$ over  the alphabet $\{0, 1\}$, and the subsequence of $b$'s by the period-doubling word $232223232322 \cdots$ over  the alphabet $\{2, 3\}$. Since the Thue-Morse word is in ${\cal P}$, by the preceding lemma, the word
$s= 0231200221031230\cdots$ is in ${\cal P}$.
\end{example}

\begin{proposition}\label{prop:prop20} Let $z\in \Sh (x^{(1)},x^{(2)}, \ldots ,x^{(k)})$ with $x^{(i)}\in A_i^{\omega}$, $1\leq i \leq k$ and $A_i\cap A_j=\emptyset$ for $i\neq j$. If there exists $i$, $1\leq i \leq k$, such that $x^{(i)}\in {\cal P}$, then $z\in {\cal P}$.
\end{proposition}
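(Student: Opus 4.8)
The plan is to reduce the statement to the invariance property of Proposition~\ref{invariance} by exhibiting a (possibly erasing) morphism that sends $z$ onto $x^{(i)}$. Since the alphabets $A_1,\ldots,A_k$ are pairwise disjoint, the natural candidate is the projection onto $A_i$: setting $B=\bigcup_{j=1}^k A_j$, define the morphism $h\colon B^*\rightarrow A_i^*$ by $h(b)=b$ for $b\in A_i$ and $h(b)=\varepsilon$ for $b\in B\setminus A_i$.

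The key step is then to check that $h(z)=x^{(i)}$. By the definition of the shuffle operator, $z$ admits a factorization $z=\prod_{n=1}^{\infty}U_n^{(1)}U_n^{(2)}\cdots U_n^{(k)}$ with each $U_n^{(j)}\in A_j^*$ and with $x^{(j)}=\prod_{n=1}^{\infty}U_n^{(j)}$ for every $j$. Applying $h$, and using that $h$ erases exactly the letters outside $A_i$ together with the fact that $U_n^{(j)}\in A_j^*$ and $A_j\cap A_i=\emptyset$ for $j\neq i$, each block $U_n^{(j)}$ with $j\neq i$ is deleted while each block $U_n^{(i)}$ is left untouched. Hence $h(z)=\prod_{n=1}^{\infty}U_n^{(i)}=x^{(i)}$; in particular $h(z)$ is an infinite word, since $x^{(i)}$ is.

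Since $x^{(i)}\in{\cal P}$ by hypothesis, Proposition~\ref{invariance} applied with $s=z$, $t=x^{(i)}$, and the morphism $h$ immediately yields $z\in{\cal P}$, which is the desired conclusion.

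I do not anticipate a genuine obstacle here; the only point needing a little care is the verification that the projection morphism really maps $z$ to $x^{(i)}$, and this is precisely where the disjointness of the alphabets $A_j$ is used in an essential way. (Note that $h$ is in general erasing, so by Remark~\ref{rem:lcolo} one only obtains $z\in{\cal P}_{k'+1}$ from $x^{(i)}\in{\cal P}_{k'}$; but for membership in ${\cal P}$ this loss is immaterial.)
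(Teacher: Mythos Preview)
Your proof is correct and follows essentially the same approach as the paper: both define the projection morphism $h$ onto $A_i$ (erasing all letters outside $A_i$), observe that $h(z)=x^{(i)}$ by the disjointness of the alphabets, and then invoke Proposition~\ref{invariance}. Your version simply spells out in more detail the verification that $h(z)=x^{(i)}$, which the paper states in one line.
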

\begin{proof} Let us suppose that  $x^{(i)}\in {\cal P}$ for a suitable $i$.  The word $z$ is over  the alphabet $A= \bigcup_{i=1}^k A_i$.
We consider the morphism $h: A^*\rightarrow A_i^*$ defined by  $h(a)= a$ if $a\in A_i$ and $h(a)= \varepsilon$, otherwise.
From the definition of $z$ one has $h(z)= x^{(i)}$. Since $x^{(i)}\in {\cal P}$, by Proposition~ \ref{invariance}, it follows $z\in {\cal P}$.
\end{proof}

 Let $t\in A^\omega$ be an infinite word having a  recurrent prefix $u\in A^+$. Let $\mathcal{R}_u(t)$ denote the set of all {\em return words} to $u$ in $t,$ i.e., the set of all $v\in A^+$ such that  $vu$ is a factor of $t$ which begins and ends with $u$ and $|vu|_u=2$; $vu$ is called a {\em complete return} to $u$.
 The  prefix $u$ is uniformly recurrent if and only if  $\mathcal{R}_u(t)$ is finite. A recurrent  word $t$ is uniformly recurrent if and only if $\mathcal{R}_u(t)$ is finite for all non-empty prefixes $u$ of $t$.

 If $u\in A^+$ is a uniformly recurrent  prefix of $t$,  one can induce  a linear ordering on the set $\mathcal{R}_u(t)$  as follows: for $v, v' \in \mathcal{R}_u(t)$ we set
 $v<v'$ if
 the first occurrence of $v$ in $t$ precedes that of $v'.$
Let $A'=\{1,2,\ldots ,\card(\mathcal{R}_u(t))\},$
and let $\sigma :A'\rightarrow \mathcal{R}_u(t)$ be the unique order-preserving bijection which induces an isomorphism, still denoted by $\sigma$, of $A'^*$ and $(\mathcal{R}_u(t))^*$.
The sequence $t$ may be written uniquely as a concatenation
$t=v_1v_2v_3\cdots $ with $v_i\in \mathcal{R}_u(t).$

F.  Durand  \cite{Du} defines the {\it derived word} of
$t$ with respect to $u$ as
$$\mathcal D_u(t)=\sigma ^{-1}(v_1)\sigma ^{-1}(v_2)\sigma ^{-1}(v_3)\cdots $$
regarded as an infinite word over  the alphabet $A'.$

\begin{corollary}\label{cor: cprw}Let $u\in A^+$ be a uniformly recurrent prefix of a  word $t\in  A^\omega$.  Then   $t \in {\cal P}$ if and only if $\mathcal{D}_u(t) \in {\cal P}$.
\end{corollary}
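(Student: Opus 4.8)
The plan is to establish both implications using Proposition~\ref{invariance}, exploiting the fact that there is a morphism $\sigma$ relating $\mathcal{D}_u(t)$ and the factorization of $t$ into return words. Writing $t = v_1 v_2 v_3 \cdots$ with $v_i \in \mathcal{R}_u(t)$ and $\mathcal{D}_u(t) = \sigma^{-1}(v_1)\sigma^{-1}(v_2)\cdots \in A'^\omega$, we have by construction $\sigma(\mathcal{D}_u(t)) = t$, where $\sigma$ is the (non-erasing) morphism extending the order-preserving bijection $A' \to \mathcal{R}_u(t)$.

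For the implication ``$\mathcal{D}_u(t) \in {\cal P} \implies t \in {\cal P}$'', I would apply Proposition~\ref{invariance} directly with $s = \mathcal{D}_u(t)$, $h = \sigma$, so that $h(s) = t$; since $t \in {\cal P}$ is the hypothesis there and $\mathcal{D}_u(t)$ is the source word, this gives $\mathcal{D}_u(t) \in {\cal P}$ — wait, that is the wrong direction. Let me restate: Proposition~\ref{invariance} says if $t = h(s)$ and $t \in {\cal P}$ then $s \in {\cal P}$. So with $s = \mathcal{D}_u(t)$ and $h = \sigma$ we get: if $t \in {\cal P}$ then $\mathcal{D}_u(t) \in {\cal P}$. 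That handles the forward direction of the corollary for free (and by Remark~\ref{rem:lcolo}, since $\sigma$ is non-erasing, no color is lost).

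The harder direction is ``$\mathcal{D}_u(t) \in {\cal P} \implies t \in {\cal P}$'', which is \emph{not} an instance of Proposition~\ref{invariance} as stated, since $\sigma$ goes the wrong way. Here the plan is to build a coloring of $\Ff^+ t$ directly from a good coloring $c': \Ff^+ \mathcal{D}_u(t) \to C$ of $\mathcal{D}_u(t)$. The key combinatorial observation is that $u$ is a uniformly recurrent prefix of $t$, so every sufficiently long factor of $t$ determines a unique ``central'' decomposition relative to the occurrences of $u$: given a factor $w$ of $t$ long enough to contain at least one occurrence of $u$, one can isolate the factor $w' = \sigma(W)$ sitting between the first and last occurrence of $u$ in $w$, where $W \in A'^*$ is a factor of $\mathcal{D}_u(t)$, together with a bounded-length prefix and suffix of $w$ (a proper suffix of some return word on the left, a proper prefix of some return word plus an occurrence of $u$ on the right). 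I would color $w$ by the triple consisting of this boundary data together with $c'(W)$ (and use a separate color for the finitely many short factors containing no occurrence of $u$). Then, given a factorization $t = U_1 U_2 U_3 \cdots$ that is monochromatic for this coloring, after discarding finitely many initial blocks one checks that each $U_i$ begins and ends with $u$ in a compatible way, so the $U_i$ regroup the return word factorization $t = v_1 v_2 \cdots$ into blocks $U_i = \sigma(W_i)$, yielding a factorization $\mathcal{D}_u(t) = W_1 W_2 W_3 \cdots$ with all $c'(W_i)$ equal — contradicting the choice of $c'$.

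The main obstacle is the boundary bookkeeping: making precise how a monochromatic factorization of $t$ forces alignment with the occurrences of $u$, and ruling out degenerate cases (e.g. blocks $U_i$ that fail to contain an occurrence of $u$, which is where uniform recurrence and the finiteness of $\mathcal{R}_u(t)$ are essential — finitely many short factors can be colored apart, and a monochromatic factorization cannot consist only of short blocks since $t$ is not purely periodic; actually one must argue that in a monochromatic factorization all but finitely many blocks are ``long'' and $u$-aligned). Once the alignment is established, the passage from the $t$-factorization to the $\mathcal{D}_u(t)$-factorization via $\sigma^{-1}$ is routine. I would also remark that the resulting bound on the number of colors for $t$ depends on $\card(\mathcal{R}_u(t))$ and the lengths of the return words, consistent with the ``$k$ depends on the derived word'' phrasing used elsewhere in the paper.
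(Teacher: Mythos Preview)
Your forward direction is exactly the paper's: since $t=\sigma(\mathcal{D}_u(t))$, Proposition~\ref{invariance} gives $t\in{\cal P}\Rightarrow\mathcal{D}_u(t)\in{\cal P}$.

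For the converse the paper takes a much simpler route than you do, and your sketch has a genuine gap. The paper does \emph{not} track two-sided boundary data for arbitrary factors. Instead it uses the prefix/non-prefix dichotomy: non-prefixes of $t$ receive a single color $-1$, short prefixes are colored by their length, and a long prefix $z$ receives the color $c'(\sigma^{-1}(z))$ when $zu\in\Ff^+t$ (and a dummy color $0$ otherwise). The point is that any prefix of $t$ of length $\geq |u|$ automatically begins with $u$, so the left boundary disappears for free; and in a monochromatic prefixal factorization each $U_{i+1}$ begins with $u$, forcing $U_iu\in\Ff^+t$, so each $U_i$ is already a concatenation of return words and $\mathcal{D}_u(t)=\sigma^{-1}(U_1)\sigma^{-1}(U_2)\cdots$ is $c'$-monochromatic. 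No boundary bookkeeping at all.

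In your scheme you color a long factor $w$ by $(\alpha,\gamma,c'(W))$, where $\sigma(W)$ sits between the first and last occurrence of $u$ in $w$. In a monochromatic factorization you do get $\alpha=\varepsilon$ (since $U_1$ is a prefix of $t$), but then each block has the form $U_i=\sigma(W_i)\,\gamma$ with $\gamma$ beginning in $u$ and $\gamma\neq\varepsilon$; you do \emph{not} get $U_i=\sigma(W_i)$ as you assert. Because every $U_{i+1}$ begins with $u$, the word $\gamma$ happens to lie in $\sigma(A'^+)$, say $\gamma=\sigma(Z)$, and one obtains $\mathcal{D}_u(t)=(W_1Z)(W_2Z)(W_3Z)\cdots$. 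But your color records $c'(W_i)$, not $c'(W_iZ)$, so constancy of the former does not yield a $c'$-monochromatic factorization of $\mathcal{D}_u(t)$. The repair is easy (fold $Z$ into the color via $\gamma$, or---better---drop the right boundary and color by whether $\alpha^{-1}w\in\sigma(A'^+)$, which after $\alpha=\varepsilon$ is exactly the paper's test $zu\in\Ff^+t$), but as written the argument does not close. Also, ``after discarding finitely many initial blocks'' is off: in a monochromatic factorization every block carries the same color, so there is nothing to discard.
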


\begin{proof} One direction is an immediate consequence of Proposition~\ref{invariance}. In fact, since  $t$ is the morphic image under $\sigma$ of  $\mathcal{D}_u(t)$, it follows that if  $t \in {\cal P}$, then $\mathcal{D}_u(t) \in {\cal P}$.
Conversely, suppose  $\mathcal{D}_u(t) \in {\cal P}$.
More precisely, there exists a finite coloring $c':  \Ff ^+\mathcal{D}_u(t) \rightarrow C'$ with respect to which no factorization of $\mathcal{D}_u(t) $ is monochromatic. Without loss of generality we can assume that
$C'\cap \{-1,0,1,2,\ldots, |u|-1\}=\emptyset.$ We now define a finite coloring $c: \Ff ^+t \rightarrow C' \cup \{-1,0,1,2,\ldots, |u|\}$ as follows: for $z\in \Ff ^+t,$  set
$$ c(z)= \begin{cases}
|z|, & \text{if  $z$ is a prefix of $t$ with $|z|< |u|$};\\
c'(\sigma^{-1}(z)), & \text{if $z$ is a prefix of $t$ with $|z|\geq |u|$ and $zu\in  \Ff ^+t $};\\
0, & \text{if $z$ is a prefix of $t$ with $|z|\geq |u|$ and $zu\notin  \Ff ^+t $};\\
-1, & \text{if $z$ is not a prefix of $t$}.
\end{cases}$$
 We note that if $z\in \Ff ^+t$ begins with  $u$ and $zu \in \Ff ^+t$, then $z$ is a concatenation of  return words  to $u$ and hence is in the image of $\sigma.$
Now, since $t$ is not periodic, a monochromatic factorization of $t$ corresponds to a factorization of $t=U_1U_2U_3\cdots$ in which each $U_i$ is a prefix of $t$ beginning with $u.$ Hence for each $i\geq 1,$ we have  $U_i=\sigma (V_i)$ for some factor $V_i$ of   $\mathcal{D}_u(t) $ and $c(U_i)=c'(V_i).$ This gives the monochromatic factorization $\mathcal{D}_u(t) =V_1V_2V_3\cdots,$ a contradiction.
\end{proof}

Since a word $t$ is uniformly recurrent  if and only if  the set $\mathcal{R}_u(t)$ is finite for any non-empty prefix $u$ of $t$,
from the preceding result one obtains:

\begin{corollary}\label{derived}Let $u\in A^+$ be any prefix of a uniformly recurrent word $t\in  A^\omega.$   Then  $t \in {\cal P}$ if and only if $\mathcal{D}_u(t) \in {\cal P}$. \end{corollary}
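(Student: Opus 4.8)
The plan is to deduce Corollary~\ref{derived} directly from the preceding Corollary~\ref{cor: cprw}. The two statements differ only in the hypothesis imposed on $u$: Corollary~\ref{cor: cprw} requires that the prefix $u$ be \emph{uniformly recurrent} in $t$, whereas here we assume only that the whole word $t$ is uniformly recurrent and that $u$ is an arbitrary non-empty prefix of $t$. Hence the single point to verify is that, under the hypothesis of Corollary~\ref{derived}, the prefix $u$ is itself uniformly recurrent (in particular that $\mathcal{D}_u(t)$ is well defined); once this is established, Corollary~\ref{cor: cprw} applies word for word and yields the conclusion.

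To verify this point I would argue as follows. Since $t$ is uniformly recurrent, by definition every factor of $t$ is uniformly recurrent; in particular the prefix $u$ is uniformly recurrent, and so, by the characterization recorded just before the definition of the derived word in Section~\ref{sec:ipp}, the set $\mathcal{R}_u(t)$ of return words to $u$ in $t$ is finite. Consequently the order-preserving bijection $\sigma$ and the factorization $t=v_1v_2v_3\cdots$ into complete returns to $u$ both exist, so that the derived word $\mathcal{D}_u(t)$ is a well-defined infinite word over the finite alphabet $A'=\{1,\dots,\card(\mathcal{R}_u(t))\}$ and $t=\sigma(\mathcal{D}_u(t))$.

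With $u$ now known to be a uniformly recurrent prefix of $t$, Corollary~\ref{cor: cprw} gives immediately that $t\in{\cal P}$ if and only if $\mathcal{D}_u(t)\in{\cal P}$, which is precisely the assertion of Corollary~\ref{derived}. I do not anticipate any genuine obstacle here: all the substance lies in Corollary~\ref{cor: cprw} --- whose forward implication is an instance of the invariance Proposition~\ref{invariance} applied to $t=\sigma(\mathcal{D}_u(t))$, and whose converse lifts a good finite coloring of $\mathcal{D}_u(t)$ to one of $t$ by singling out prefixes of $t$, coloring those that begin with $u$ through the $\sigma$-preimage, and reserving extra colors for the short prefixes and for the non-prefixes --- whereas the present corollary is merely a repackaging that uses the elementary fact that uniform recurrence of $t$ is inherited by every prefix of $t$. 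The only thing meriting a comment is the degenerate possibility that $u$ fails to be recurrent, which uniform recurrence of $t$ rules out at once.
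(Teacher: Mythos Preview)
Your proposal is correct and follows exactly the paper's own approach: the paper simply notes that since $t$ is uniformly recurrent, $\mathcal{R}_u(t)$ is finite for every non-empty prefix $u$, whence Corollary~\ref{cor: cprw} applies directly. Your additional remarks about the content of Corollary~\ref{cor: cprw} and the well-definedness of $\mathcal{D}_u(t)$ are accurate but more than the paper itself spells out.
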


\section{Non-uniformly recurrent words}\label{sec:nur}

 We shall now prove that the coloring problem has a positive answer for any infinite word which is not uniformly recurrent (cf. Theorem~\ref{thm:notunire}).
We need some preparatory lemmas.

Let $Z$  be any symbol, and let $W$ consist of $Z$  together with all infinite words which take on finitely many values. Define a map $$S: W\rightarrow W$$  as follows:
If $x$ is an infinite word  and  $a$ the first symbol of $x$, then $S(x)$  is equal to ${\cal D}_a(x)$  if $a$  is uniformly recurrent in $x$, and equal to $Z$ otherwise. Moreover, we set  $S(Z)=Z$. From Lemma \ref{lemma:B} one has that if $x$ has a prefixal factorization, then $S(x) \neq Z$.

\begin{lemma}\label{lemma D} Suppose $x$ and $S(x)$ are infinite words. If $S(x)$ is in ${\cal P}$, then $x$ is in ${\cal P}$.
\end{lemma}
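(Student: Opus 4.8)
The plan is to recognize this statement as, essentially, one direction of Corollary~\ref{cor: cprw} applied to a single-letter prefix. First I would unwind the definition of $S$. The hypothesis that $S(x)$ is an infinite word means $S(x)\neq Z$, and by the definition of $S$ this happens precisely when the first letter $a$ of $x$ is uniformly recurrent in $x$; in that case $S(x)={\cal D}_a(x)$. Note that $a$, being the first symbol of $x$, is in particular a prefix of $x$, and by the above it is a uniformly recurrent factor, so $a$ is a uniformly recurrent prefix of $x$ in the sense of Section~\ref{sec:ipp}.

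Having established this, I would simply invoke Corollary~\ref{cor: cprw} with $t=x$ and $u=a$: it asserts that $x\in{\cal P}$ if and only if ${\cal D}_a(x)\in{\cal P}$. Since by hypothesis $S(x)={\cal D}_a(x)\in{\cal P}$, the relevant direction of the equivalence yields $x\in{\cal P}$, which is exactly the desired conclusion. Only the implication ``${\cal D}_a(x)\in{\cal P}\Rightarrow x\in{\cal P}$'' is used, and its proof (reproduced in Corollary~\ref{cor: cprw}) builds an explicit finite coloring of $\Ff^+x$ from a good coloring of $\Ff^+{\cal D}_a(x)$ by distinguishing prefixes from non-prefixes of $x$ and transporting colors through the return-word isomorphism $\sigma$; there is no need to repeat it here.

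The only point requiring care is the bookkeeping in unwinding the definition of $S$: one must check carefully that ``$S(x)$ is an infinite word'' is equivalent to ``$a$ is uniformly recurrent in $x$'', so that $a$ genuinely qualifies as a uniformly recurrent prefix and Corollary~\ref{cor: cprw} is applicable with $S(x)={\cal D}_a(x)$. Beyond this routine verification there is no real obstacle, since the substantive work has already been carried out in Section~\ref{sec:ipp}.
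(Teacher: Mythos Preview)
Your proposal is correct and follows essentially the same approach as the paper: unwind the definition of $S$ to see that $S(x)={\cal D}_a(x)$ with $a$ the uniformly recurrent first letter of $x$, then apply Corollary~\ref{cor: cprw} with $u=a$. The paper's proof is just a two-sentence version of exactly this argument.
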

\begin{proof} Since $S(x)$ is an infinite word, $S(x)= {\cal D}_a(x)$ where $a$, the first letter of $x$, is uniformly recurrent.  As ${\cal R}_a(x)$ is finite and $S(x) \in {\cal P}$,  by Corollary~\ref{cor: cprw}, it follows that $x\in {\cal P}$.
\end{proof}

\begin{lemma}\label{lemma:A} If $x$ is not uniformly recurrent, then there exists $n\geq  1$  such that
$S^n(x)=Z$.
\end{lemma}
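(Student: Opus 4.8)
I want to show that iterating $S$ starting from a non-uniformly-recurrent word $x$ reaches $Z$ in finitely many steps. The natural quantity to track is the length of the shortest non-recurrent prefix. Recall (from the Preliminaries) that $x$ is not uniformly recurrent iff some prefix of $x$ is not uniformly recurrent; let $p_x$ denote the shortest such prefix and set $\ell(x)=|p_x|$. If $\ell(x)=1$, then the first letter $a$ of $x$ is not uniformly recurrent in $x$, so by definition $S(x)=Z$ and we are done with $n=1$. So assume $\ell(x)\geq 2$. The first letter $a$ of $x$ is then uniformly recurrent (it is a strictly shorter prefix than $p_x$), so $S(x)={\cal D}_a(x)$ is an infinite word over the alphabet ${\cal R}_a(x)$ of return words to $a$. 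The crux is a strict-decrease statement: I claim $S(x)$ is again not uniformly recurrent and $\ell(S(x))<\ell(x)$. Given this, the sequence $\ell(x)>\ell(S(x))>\ell(S^2(x))>\cdots$ of positive integers must eventually hit $1$ (at which point the next application of $S$ yields $Z$), unless some earlier $S^k(x)$ already equals $Z$; either way $S^n(x)=Z$ for some $n$.

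**The key step: $\ell$ strictly decreases.** I would argue as follows. Write $p_x = a w$ where $w$ may be empty; since $a$ is uniformly recurrent but $p_x$ is not, we have $|aw|\geq 2$, i.e. $w\neq\varepsilon$ is allowed to have length $|p_x|-1$. The idea is to translate "non-(uniform-)recurrence of $p_x$ in $x$" into "non-(uniform-)recurrence of a prefix of length $<\ell(x)$ in ${\cal D}_a(x)$." Concretely, occurrences of $p_x=aw$ in $x$ correspond to certain occurrences of $a$ in $x$, hence to positions in the derived word $\mathcal{D}_a(x)$; a prefix $U$ of $\mathcal{D}_a(x)$ with $\sigma(U)$ long enough to "see past" $w$ determines exactly where the next occurrence of $p_x$ sits. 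One shows that there is a prefix $q$ of $\mathcal{D}_a(x)$ with $|q|\leq |w| = \ell(x)-1$ whose occurrences in $\mathcal{D}_a(x)$ are in bijection (via $\sigma$ and the block structure) with the occurrences of $p_x$ in $x$ past the first few; since the latter are either finite in number, or infinite but not syndetic, the same holds for $q$ in $\mathcal{D}_a(x)$. Hence $\mathcal{D}_a(x)$ has a non-uniformly-recurrent prefix of length $\leq \ell(x)-1$, so $\ell(S(x)) \leq \ell(x)-1 < \ell(x)$, and in particular $S(x)$ is not uniformly recurrent (so the induction can continue and $S^2(x)$ is still an infinite word, etc.).

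**Main obstacle.** The delicate point is the precise correspondence between prefixes of $x$ and prefixes of $\mathcal{D}_a(x)$ and the bookkeeping of lengths: a prefix of $\mathcal{D}_a(x)$ of block-length $m$ maps under $\sigma$ to a prefix of $x$ whose ordinary length is the sum of the lengths of the first $m$ return words, which can be much larger than $m$. So one must be careful to extract a prefix $q$ of $\mathcal{D}_a(x)$ that is genuinely \emph{short} (length $\le \ell(x)-1$) yet still detects failure of uniform recurrence. The clean way is: since $p_x=aw$ is the \emph{shortest} non-recurrent (resp. non-uniformly-recurrent) prefix, $aw'$ is (uniformly) recurrent for every proper prefix $w'$ of $w$; using that every occurrence of $a$ extends to an occurrence of $aw'$ but the occurrences of $aw=p_x$ are sparse, one identifies the obstruction inside a window of $|w|$ return words, giving a prefix of $\mathcal{D}_a(x)$ of length at most $|w|=\ell(x)-1$ that is not uniformly recurrent. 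I would also double-check the degenerate cases ($w$ a single letter, or $\mathcal{R}_a(x)$ a singleton — but the latter forces $x=a^\omega$, contradicting non-recurrence of $p_x$), and the base case $\ell(x)=1$ as above.

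Putting these together: define $\ell(S^k(x))$ as long as $S^k(x)$ is an infinite word; it strictly decreases in $\mathbb{N}$, so it cannot stay above $1$ forever. The first time it equals $1$ — say at $S^{n-1}(x)$ — the first letter of $S^{n-1}(x)$ is not uniformly recurrent in it, hence $S^n(x)=Z$. (And if at some stage $S^k(x)=Z$ already, then $n=k$ works since $S(Z)=Z$.) This proves the lemma.
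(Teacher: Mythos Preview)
Your descent strategy --- tracking $\ell(x)$, the length of the shortest non-uniformly-recurrent prefix, and arguing it strictly drops under $S$ --- is genuinely different from the paper's. The paper goes upward rather than downward: by induction on $n$, invoking Durand's composition formula $\mathcal{D}_1(\mathcal{D}_u(x))=\mathcal{D}_w(x)$ with $|w|>|u|$ (\cite[Proposition~2.6]{Du}), it shows that whenever $S^n(x)\neq Z$ one has $S^n(x)=\mathcal{D}_u(x)$ for some prefix $u$ of $x$ with $|u|\geq n$. Since $\mathcal{D}_u(x)$ is defined only when $u$ is uniformly recurrent, this forces $|u|<|p|$ (with $p$ the shortest non-UR prefix), hence $n<|p|$. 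That is a two-line argument once Durand's lemma is available; your route avoids the external citation but requires a genuine combinatorial step.

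That step --- $\ell(\mathcal{D}_a(x))\leq \ell(x)-1$ --- is in fact true, but your justification has a real gap. Two of your assertions are wrong as stated: the claimed bijection between occurrences of a short prefix $q$ of $\mathcal{D}_a(x)$ and occurrences of $p_x$ in $x$ is in general only an inclusion (an occurrence of $p_x=aw$ at some position $n_i$ does not force any particular block pattern of length $|w|$ at position $i$ of the derived word), and ``every occurrence of $a$ extends to an occurrence of $aw'$'' conflates uniform recurrence of $aw'$ with $a$ having a unique right extension, which it need not have. A clean fix uses instead the standard correspondence: for any nonempty prefix $q$ of $\mathcal{D}_a(x)$, $q$ is uniformly recurrent in $\mathcal{D}_a(x)$ if and only if $\sigma(q)a$ is uniformly recurrent in $x$. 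Take $q$ to be the shortest prefix of $\mathcal{D}_a(x)$ with $|\sigma(q)|\geq |p_x|$; then $\sigma(q)a$ has the non-UR word $p_x$ as a prefix, so $q$ is not UR. Writing $q'$ for the prefix of length $|q|-1$, one has $|q|-1=|q'|\leq |\sigma(q')|\leq |p_x|-1$, giving $|q|\leq |p_x|-1$ unless all these inequalities are equalities; in that boundary case $v_1=\cdots=v_{|p_x|-1}=a$, so $\sigma(q')a=a^{|p_x|}=p_x$ and $q'$ itself is already a non-UR prefix of length $|p_x|-1$. Either way $\ell(\mathcal{D}_a(x))\leq \ell(x)-1$, and your descent goes through.
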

\begin{proof}
Let us  first prove that for any $n\geq 1$ if $S^n(x)\neq Z$, then $S^n(x)= {\cal D}_u(x)$ for a suitable prefix of $x$ of length $|u|\geq n$.
The proof is by induction on the integer $n$.  For the base of the induction, we observe that  if $S(x)\neq Z$, then the first letter $a$ of $x$ is uniformly recurrent and $S(x)= {\cal D}_a(x)$. Let us suppose $n>1$ and $S^n(x)\neq Z$. One has
$$  S^n(x) =  {\cal D}_{1}(S^{n-1}(x)),$$
where   the first letter $1$ of  $S^{n-1}(x)$ is  uniformly recurrent in $S^{n-1}(x)$.
By induction, we have $S^{n-1}(x)=  {\cal D}_u(x)$ with $u$ the prefix of $x$ of length $|u|\geq n-1$. Hence, one has  $S^n(x) =  {\cal D}_{1}( {\cal D}_u(x))$.  From a general result on derived sequences  (see \cite[Proposition 2.6]{Du}) one has that
${\cal D}_{1}( {\cal D}_u(x))=  {\cal D}_w(x)$ where $w$ is a prefix of $x$ of length $|w|>|u|$.

Let $p$ be the shortest prefix of $x$ which is not uniformly recurrent. Since ${\cal D}_w(x)$ is not defined for  $|w|\geq |p|$, it follows that there must exist an integer $n\geq 1$ for which $S^n(x)= Z$.
\end{proof}

\begin{theorem}\label{thm:notunire} Let $x$ be a non-uniformly recurrent infinite word. Then $x$ is in ${\cal P}$.
\end{theorem}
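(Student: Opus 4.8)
The plan is to combine the two lemmas just proved (Lemma~\ref{lemma D} and Lemma~\ref{lemma:A}) with a short induction on the number of iterations of the map $S$ needed to reach the sink $Z$. The key observation is that $Z$ itself is \emph{not} an infinite word, so it is never a member of ${\cal P}$ in any meaningful sense; but the last infinite word appearing in the orbit $x, S(x), S^2(x),\ldots$ before $Z$ is killed has a very special feature — the letter being eliminated is uniformly recurrent, hence that word \emph{does} have a prefixal factorization available to exploit.

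First I would invoke Lemma~\ref{lemma:A} to fix the least integer $m\geq 1$ with $S^m(x)=Z$, and note that by minimality $S^{m-1}(x)$ is an infinite word (with the convention $S^0(x)=x$ when $m=1$). Next I would argue that $S^{m-1}(x)$ belongs to ${\cal P}$: since $S\bigl(S^{m-1}(x)\bigr)=Z$, the first letter $a$ of $S^{m-1}(x)$ is \emph{not} uniformly recurrent in $S^{m-1}(x)$, so by the contrapositive of the remark preceding Lemma~\ref{lemma D} (i.e.\ the consequence of Lemma~\ref{lemma:B}), the word $S^{m-1}(x)$ admits no prefixal factorization. Proposition~\ref{prop:C} then gives $S^{m-1}(x)\in {\cal P}_2\subseteq {\cal P}$.

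Finally I would propagate membership in ${\cal P}$ back down the orbit: for each $j$ from $m-1$ down to $1$, the words $S^{j-1}(x)$ and $S^{j}(x)=S\bigl(S^{j-1}(x)\bigr)$ are both infinite (by minimality of $m$, both indices are $\leq m-1$), so Lemma~\ref{lemma D} applied to $S^{j-1}(x)$ yields that $S^{j}(x)\in {\cal P}$ implies $S^{j-1}(x)\in {\cal P}$. A finite descending induction then delivers $x=S^0(x)\in {\cal P}$. If $x$ itself is not recurrent the argument degenerates to $m=1$ and we simply get $x\in {\cal P}_2$ directly from Proposition~\ref{prop:C} after observing $x$ has no prefixal factorization (this also recovers Corollary~\ref{nonrec}).

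I do not anticipate a genuine obstacle here: all the real work has been front-loaded into Lemma~\ref{lemma D} (which packages Corollary~\ref{cor: cprw}) and Lemma~\ref{lemma:A} (which packages Durand's iterated-derivation result). The only point requiring a little care is the bookkeeping at the endpoint of the induction — making sure that the last infinite word in the chain is handled by Proposition~\ref{prop:C} rather than by Lemma~\ref{lemma D}, and that we never attempt to apply Lemma~\ref{lemma D} to pass through $Z$. Once the indices are pinned down by the minimality of $m$, the descent is purely formal.
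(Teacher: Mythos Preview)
Your main argument is correct and essentially identical to the paper's: take the least $m$ with $S^m(x)=Z$ (Lemma~\ref{lemma:A}), observe via Lemma~\ref{lemma:B} that $S^{m-1}(x)$ has no prefixal factorization and hence lies in ${\cal P}_2$ (Proposition~\ref{prop:C}), then descend through Lemma~\ref{lemma D}.

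One caveat: your closing parenthetical is wrong. Non-recurrence of $x$ does \emph{not} force $m=1$, nor does it force $x$ to lack a prefixal factorization. For instance, $x=aaba^\omega$ is non-recurrent, yet its first letter $a$ is uniformly recurrent, so $S(x)={\cal D}_a(x)\neq Z$ and $m>1$. Lemma~\ref{lemma:B} only gives ``first letter not uniformly recurrent $\Rightarrow$ no prefixal factorization''; non-recurrence of the whole word is a weaker hypothesis. So this route does not directly recover Corollary~\ref{nonrec} with two colors, and you should drop that remark.
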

\begin{proof} By Lemma \ref{lemma:A},  there exists a least positive integer $n \geq 1$ such that $S^n(x)=Z$. Then by Lemma~\ref{lemma:B}, $S^{n-1}(x)$ is an infinite word not admitting a prefixal factorization. By Proposition~\ref{prop:C}, $S^{n-1}(x)$ is in ${\cal P}$. Finally, by iteration of Lemma~\ref{lemma D}, we deduce that $x$ is in ${\cal P}$.
\end{proof}

\begin{proposition}\label{prop:numcol} Let   $x$ be a non-uniformly recurrent word and $p$ be the shortest prefix which is not uniformly recurrent in $x$. Then $x \in {\cal P}_k$ with
$k \leq  |p|+2$.
\end{proposition}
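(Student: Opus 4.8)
The plan is to trace through the proof of Theorem~\ref{thm:notunire} and keep careful track of how many colors each step consumes. First I would invoke Lemma~\ref{lemma:A} to fix the least positive integer $n\geq 1$ with $S^n(x)=Z$. By Lemma~\ref{lemma:B} (in the form noted just before Lemma~\ref{lemma D}), the word $y:=S^{n-1}(x)$ admits no prefixal factorization, so by Proposition~\ref{prop:C} we have $y\in{\cal P}_2$. The task is then to control the blow-up in the number of colors as we climb back down from $y=S^{n-1}(x)$ to $x$ via the $n-1$ applications of Corollary~\ref{cor: cprw} that are hidden inside Lemma~\ref{lemma D}.

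The key observation is that one should \emph{not} apply Corollary~\ref{cor: cprw} naively $n-1$ times, since each application of that corollary (as written) adds $|u|+1$ new colors, where $u$ is the relevant prefix, and the prefixes grow; this would give a bound depending on the lengths of intermediate derived words rather than on $|p|$. Instead, by the chain rule for derived words (\cite[Proposition 2.6]{Du}) used in the proof of Lemma~\ref{lemma:A}, we have $S^{n-1}(x)={\cal D}_u(x)$ for a single prefix $u$ of $x$ with $|u|\geq n-1$, and moreover $|u|<|p|$ since ${\cal D}_u(x)$ is defined only for prefixes strictly shorter than the shortest non-uniformly-recurrent prefix $p$. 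So I would collapse the $n-1$ steps into a single application of Corollary~\ref{cor: cprw} with this prefix $u$: we have $y={\cal D}_u(x)\in{\cal P}_2$, and inspecting the coloring constructed in the proof of Corollary~\ref{cor: cprw}, the color set for $t=x$ is $C'\cup\{-1,0,1,\ldots,|u|\}$ where $C'$ is the color set for ${\cal D}_u(x)$. Thus $\card(C') = 2$ contributes, together with the $|u|+2$ values $\{-1,0,1,\ldots,|u|\}$, for a total of at most $|u|+4$ colors. A small refinement is needed to reach exactly $|p|+2$: since $|u|\leq |p|-1$ this naive count gives $|p|+3$, so I would note that the two colors of $C'$ for the ${\cal P}_2$-coloring of ${\cal D}_u(x)$ can be taken to be the prefix/non-prefix pair, and the ``$-1$'' (non-prefix) and ``$0$'' (prefix with $zu\notin\Ff^+t$) colors can be merged with the non-prefix color of $C'$ — or more cleanly, observe that the non-prefix case already only needs one color, so the honest count is: one color for non-prefixes of $x$, plus $|u|$ colors for short proper prefixes (lengths $1,\ldots,|u|-1$) — actually lengths $0$ through $|u|-1$ for prefixes shorter than $|u|$ — plus the pullback of the $2$-coloring of ${\cal D}_u(x)$ restricted to prefixes of length $\geq|u|$, giving $1+|u|+2$, and absorbing the "$zu\notin\Ff^+t$" prefix-color into the length-$|u|$ slot yields $|u|+2\leq |p|+1\leq |p|+2$.

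Concretely, I would just re-run the coloring of Corollary~\ref{cor: cprw}'s proof verbatim with $t=x$, $u$ the prefix from Lemma~\ref{lemma:A}, and $c'$ a $2$-coloring of ${\cal D}_u(x)$ witnessing ${\cal D}_u(x)\in{\cal P}_2$, then count: the colors used are at most $\{-1,0,1,\ldots,|u|-1\}$ together with $\{c'(v):v\in\Ff^+{\cal D}_u(x)\}$, because a prefix $z$ of $x$ with $|z|\geq|u|$ and $zu\in\Ff^+x$ gets color $c'(\sigma^{-1}(z))\in C'$, a prefix with $|z|\geq|u|$ and $zu\notin\Ff^+x$ gets $0$ which is already listed, and $|z|<|u|$ prefixes use $\{0,1,\ldots,|u|-1\}$. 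That is $|u|+1+2 = |u|+3$ at worst, and since $|u|\leq|p|-1$ this is $\leq|p|+2$. This gives the claimed bound.

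The main obstacle is the bookkeeping: one must verify the claim $|u|<|p|$ (which follows because ${\cal D}_w(x)$ is undefined for $|w|\geq|p|$, exactly as invoked at the end of the proof of Lemma~\ref{lemma:A}), confirm that collapsing the tower $S^{n-1}$ into a single ${\cal D}_u$ is legitimate (the chain-rule Proposition 2.6 of \cite{Du}, already used above), and make sure the constant ($+2$ rather than $+3$ or $+4$) is tight by carefully reusing/merging the trivial non-prefix and degenerate-prefix colors with one of the two colors pulled back from ${\cal D}_u(x)$. None of this requires new ideas beyond what is already in Section~\ref{sec:ipp} and Section~\ref{sec:nur}; it is purely a matter of doing the constant-chasing cleanly.
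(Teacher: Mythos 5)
Your proof is correct and follows essentially the same route as the paper's: collapse the tower $S^{n-1}$ into a single derived word $\mathcal{D}_u(x)$ with $|u|<|p|$, observe that it lies in ${\cal P}_2$ because its first letter is not uniformly recurrent (so it has no prefixal factorization), and pull the $2$-coloring back through the construction in Corollary~\ref{cor: cprw}, counting $\card(C')+|u|+1=|u|+3\le|p|+2$ colors. The only differences are cosmetic: the paper treats the case $n=1$ separately and states the final color set directly as $C'\cup\{-1,0,1,\ldots,|u|-1\}$, whereas you arrive at the same count after some back-and-forth over the constant.
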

\begin{proof}
Since $x$ is not uniformly recurrent, by Lemma \ref{lemma:A},  there exists an integer $n\geq 1$, such that $S^n(x)=Z$. If $n=1$, then $|p|=1$ and by Lemma \ref{lemma:B},   $x$ does not have  a prefixal factorization, so that $x\in {\cal P}_2$. Let us suppose $n>1$. One has  $S^{n-1}(x)= D_u(x)$ for a suitable prefix $u$ of $x$ such that $|u| <|p|$. As  $S^n(x)=Z$,  the first letter of $S^{n-1}(x)$ is not uniformly recurrent
in $S^{n-1}(x)$. Thus $D_u(x)\in {\cal P}$ with a set $C'$ of colors having only two elements.
 From the proof of Corollary~\ref{cor: cprw},   $x$ is in ${\cal P}$ with a set  of colors
 $C= C' \cup \{-1,0, 1, 2, \ldots, |u|-1\}$. Hence, $\card(C) = \card(C') + |u|+1\leq  |p| +2$.
\end{proof}

\begin{example}\label{ex:luca}  Let $W_i$, $i\geq 1$, be the sequence of finite words recursively defined as follows: $W_1= ab$, $W_k = W_1\cdots W_{k-1}a$ for all $k>1$. Thus $W_2= aba$, $W_3= ababaa$, $W_4= ababaababaaa$,  etc.
Let $L\in \{a, b\}^{\omega}$ be the infinite word
 $$ L= W_1W_2 \cdots W_n \cdots.$$
 By construction, the word $L$ is recurrent. Moreover, any $W_k$, $k\geq 1$,  begins with  $a$, so that $W_k$ is a prefix of $L$. Since $W_k$ terminates with $ba^{k-1}$, one has that neither  $b$ nor  $ab$ occurs in $L$ with bounded gaps. Hence the prefix $p=ab$ is not uniformly recurrent.

 The  return words (see the discussion following Proposition \ref{prop:prop20}) of $a$ in $L$ are $ab$ and $a$, i.e., ${\cal R}_a = \{ab, a \}$. If $\sigma: \{1,2\}\rightarrow \{ab, a \}$, the derived
 sequence ${\cal D}_a(L)$ is given by
 $${\cal D}_a(L) = 1121122112211222\cdots$$
 The first letter $1$ of ${\cal D}_a(L)$ is not uniformly recurrent in ${\cal D}_a(L)$, so that $S^2(L)= Z$ and   ${\cal D}_a(L)$ does not admit a prefixal factorization. Thus the coloring problem has a positive solution for ${\cal D}_a(L)$  using only two colors.
 Letting  $c'$ denote this coloring, one has for any factor $z$ of ${\cal D}_a(L)$,  $c'(z)= W$  if $z$ is a prefix of ${\cal D}_a(L)$ and  $c'(z)=B$, otherwise.

 By using the construction given by Corollary~\ref{derived}, we deduce that  $L \in {\cal P}_4$.
 Indeed, the coloring map $c$ is defined as follows: for any $z\in L$, $c(z)= -1$ if $z$ is not a prefix of $L$,
 $c(z)= 0$ if $z$ is a prefix of $L$ and $za \not\in \Ff L$, $c(z)= c'(\sigma^{-1}(z))$ if  $z$ is a prefix of $L$ and $za\in \Ff L$. The set of colors is then $\{-1, 0, W, B\}$.
 \end{example}

 Since a non-recurrent word is non-uniformly recurrent, one has by Proposition~\ref{prop:numcol}, that a non-recurrent word $x$ is in ${\cal P}_k$  with $k \leq |p|+2$, where $p$ is the shortest non-uniformly recurrent prefix of $x$. This number is less than or equal to that given by Corollary~\ref{nonrec}, since $p$ is shorter than or equal to the shortest non-recurrent prefix.  In any case this bound can be very large; however, as we shall see shortly,  in the case of non-recurrent words   under suitable hypotheses
 the number of colors can be reduced to $4$ (cf. Proposition~\ref{prop:four}).

Let $s$ be an infinite word over the alphabet $A$. For any integer $n\geq 0$ we let
$s_{[n]}$ denote  the prefix of $s$ of length $n$. Moreover, for any letter $a\in A$ we set
$$ f'_a(s) = \liminf_{n\rightarrow \infty} \frac{|s_{[n]}|_a}{n}, \ \  f''_a(s)=  \limsup_{n\rightarrow \infty}\frac{{|s_{[n]}}|_a}{n}.$$
 If $f'_a(s)=f''_a(s)$,  then the limit  $  \lim_{n\rightarrow \infty}\frac{{|s_{[n]}}|_a}{n}$ exists, is denoted
 $f_a(s)$, and is called the {\em frequency of the letter} $a$ in $s$.
The following lemma will be useful in the following.

\begin{lemma}\label{lemma:alpha} Let $s\in A^{\omega}$ be an infinite word having a factorization
$$s = V_1V_2\cdots V_n \cdots, $$
where each $V_i$, $i\geq 1$, is a non-empty factor of $s$ of length $|V_i|\leq M$, with $M$ any fixed
positive integer. Let us define for all $a\in A$ and $i\geq 1$
$$f_a(V_i) = \frac{|V_i|_a}{|V_i|},$$
and
$m_a= \min\{f_a(V_i) \mid i\geq 1\}, \  M_a =\max\{f_a(V_i) \mid i\geq 1\}.$
For any $a\in A$
one has
$$m_a\leq  f'_a(s)\leq f''_a(s)\leq M_a.$$
\end{lemma}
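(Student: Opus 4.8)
The plan is to estimate the prefix frequencies of a letter $a$ by comparing the prefix $s_{[n]}$ with the block decomposition $s=V_1V_2\cdots$. First I would fix $a\in A$ and a length $n$. Since $|V_i|\le M$ for all $i$, there is a unique index $k=k(n)$ such that $V_1\cdots V_k$ is a prefix of $s$ of length $\le n$ while $V_1\cdots V_{k+1}$ has length $> n$; concretely, writing $\ell_j=|V_1\cdots V_j|$, take $k$ maximal with $\ell_k\le n$, so that $n-M < \ell_k \le n \le \ell_{k+1}$. Then $s_{[n]} = V_1\cdots V_k\, P$, where $P$ is a (possibly empty) proper prefix of $V_{k+1}$, and hence
\[
\sum_{i=1}^{k}|V_i|_a \ \le\ |s_{[n]}|_a\ \le\ \sum_{i=1}^{k}|V_i|_a + |V_{k+1}|\ \le\ \sum_{i=1}^{k}|V_i|_a + M.
\]

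Next I would bound the sum $\sum_{i=1}^k |V_i|_a$ from below and above using $m_a$ and $M_a$. By definition of $m_a$ and $M_a$ we have $m_a |V_i| \le |V_i|_a \le M_a |V_i|$ for every $i$, so summing over $i=1,\dots,k$ gives $m_a\, \ell_k \le \sum_{i=1}^k |V_i|_a \le M_a\, \ell_k$. Combining with the displayed two-sided estimate on $|s_{[n]}|_a$ and dividing by $n$ yields
\[
\frac{m_a\,\ell_k}{n}\ \le\ \frac{|s_{[n]}|_a}{n}\ \le\ \frac{M_a\,\ell_k + M}{n}.
\]
Now I use $n-M < \ell_k \le n$. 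For the lower bound, $\dfrac{\ell_k}{n} > \dfrac{n-M}{n} = 1 - \dfrac{M}{n}$, so $\dfrac{|s_{[n]}|_a}{n} > m_a\bigl(1-\tfrac{M}{n}\bigr)$. For the upper bound, $\ell_k \le n$ gives $\dfrac{M_a \ell_k + M}{n} \le M_a + \dfrac{M}{n}$, so $\dfrac{|s_{[n]}|_a}{n} \le M_a + \dfrac{M}{n}$. (If $m_a$ could be negative this step would fail, but $m_a\ge 0$ since it is a minimum of nonnegative quantities, and in fact $m_a\le 1$, so $m_a(1-M/n)\ge m_a - M/n$ as well.)

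Finally I would let $n\to\infty$. Since $M$ is fixed, $M/n\to 0$, so taking $\liminf$ of the lower bound gives $f'_a(s) = \liminf_n \tfrac{|s_{[n]}|_a}{n} \ge m_a$, and taking $\limsup$ of the upper bound gives $f''_a(s) = \limsup_n \tfrac{|s_{[n]}|_a}{n} \le M_a$; the middle inequality $f'_a(s)\le f''_a(s)$ is immediate from $\liminf \le \limsup$. This gives $m_a \le f'_a(s) \le f''_a(s) \le M_a$, as required. I do not anticipate a serious obstacle here: the only point requiring care is the bookkeeping with the boundary block $V_{k+1}$ and the observation that $m_a \ge 0$, so that the factor $(1-M/n)$ multiplying $m_a$ does not spoil the limit; both are routine.
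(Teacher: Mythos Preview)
Your proof is correct and follows essentially the same approach as the paper: decompose $s_{[n]}$ into the full blocks $V_1\cdots V_k$ plus a partial block of $V_{k+1}$, bound $\sum_{i\le k}|V_i|_a$ between $m_a\ell_k$ and $M_a\ell_k$, absorb the boundary contribution using $|V_{k+1}|\le M$, and let $n\to\infty$. The only cosmetic differences are in bookkeeping (the paper keeps the partial block $V'_{k+1}$ explicit rather than bounding it immediately by $M$, obtaining $|s_{[n]}|_a \le M_a n + (1-M_a)M$ for the upper estimate), but the substance is identical.
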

\begin{proof} For a sufficiently large $n$ we can write for a suitable $k$
$$s_{[n]} = V_1\cdots V_kV'_{k+1},$$
with $V'_{k+1}$ a prefix of $V_{k+1}$ and $n= \sum_{i=1}^k|V_i| + |V'_{k+1}|$.
Thus as $|V'_{k+1}|\leq |V_{k+1}|\leq M$, one has
$$|s_{[n]}|_a= \sum_{i=1}^k|V_i|_a+|V'_{k+1}|_a\geq m_a \sum_{i=1}^k|V_i|\geq m_a(n-M).$$
From this it trivially follows that $f'_a(s)\geq m_a$. In a similar way one has
$$|s_{[n]}|_a\leq M_a \sum_{i=1}^k|V_i|+ |V'_{k+1}|_a \leq M_a(n-|V'_{k+1}|)+ |V'_{k+1}|.$$
Since $|V'_{k+1}|\leq M$, one derives
$$|s_{[n]}|_a \leq M_an +(1-M_a)M,$$
so that $f''_a(s) \leq M_a$. From this the result follows.
\end{proof}

 \begin{proposition}\label{prop:four} Let $s\in A^{\omega}$ be an infinite word such that there exists a letter $a\in A$ for which either $f'_a(s) < f''_a(s)$ or  $f'_a(s) = f''_a(s) = f_a(s)$ is an irrational number.  If there exists a positive integer $M$ such that for any prefixal factorization $s= U_1U_2\cdots U_n\cdots $ one has $|U_i|<M$, $i\geq 1$, then $s \in {\cal P}_4$.
\end{proposition}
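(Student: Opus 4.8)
The plan is to construct, directly, a single $4$-colouring of $\Ff^+ s$ admitting no monochromatic factorization. Observe first that $s$ cannot be periodic: if $s=u^\omega$ then the frequency $f_a(s)=|u|_a/|u|$ exists and is rational, contrary to both clauses of the hypothesis. The colouring will use one colour, say $\Delta$, for every non-empty factor of $s$ that is \emph{not} a prefix of $s$; one colour $\Gamma$ for every prefix of $s$ of length $\geq M$; and two colours $A$ and $B$ for the prefixes of $s$ of length $<M$, the prefix $u$ getting colour $A$ if $|u|_a/|u|\leq\theta$ and colour $B$ if $|u|_a/|u|>\theta$, where $\theta$ is a real threshold chosen below. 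Since $s$ has only finitely many prefixes of length $<M$, only finitely many rationals occur among the ratios $|u|_a/|u|$ for such prefixes.

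The threshold is chosen according to which clause of the hypothesis holds: if $f'_a(s)<f''_a(s)$, fix any real $\theta$ with $f'_a(s)<\theta<f''_a(s)$; if instead $f_a(s)$ exists and is irrational, put $\theta=f_a(s)$. The crucial point, which I would isolate as a lemma, is that for \emph{every} prefixal factorization $s=U_1U_2\cdots$ some piece $U_i$ satisfies $|U_i|_a/|U_i|<\theta$ and some piece $U_j$ satisfies $|U_j|_a/|U_j|>\theta$. To see this, note that by the standing hypothesis each $|U_k|<M$, so Lemma~\ref{lemma:alpha} applies (with bound $M$) to the factorization $s=U_1U_2\cdots$ and yields $m_a\leq f'_a(s)\leq f''_a(s)\leq M_a$, where $m_a$ and $M_a$ denote the smallest and largest of the finitely many values $|U_k|_a/|U_k|$. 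In the first case $m_a\leq f'_a(s)<\theta<f''_a(s)\leq M_a$ directly. In the second case $m_a\leq f_a(s)=\theta\leq M_a$, and since $m_a,M_a\in\rats$ while $\theta\notin\rats$, both inequalities are strict. In either case the pieces realising $m_a$ and $M_a$ are the required $U_i$ and $U_j$.

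Granting this, I would finish by ruling out a monochromatic factorization $s=U_1U_2\cdots$, arguing by cases on the common colour $\gamma$. If $\gamma=\Delta$ then no $U_i$ is a prefix of $s$, contradicting that $U_1$ is a prefix of $s$; hence every $U_i$ is a prefix of $s$ and the factorization is prefixal, so by hypothesis every $|U_i|<M$. Then no $U_i$ can carry colour $\Gamma$ (reserved for prefixes of length $\geq M$), so $\gamma\in\{A,B\}$. But $\gamma=A$ forces $|U_i|_a/|U_i|\leq\theta$ for all $i$, and $\gamma=B$ forces $|U_i|_a/|U_i|>\theta$ for all $i$; either statement contradicts the crucial point above. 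Hence $s\in{\cal P}_4$.

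The only step requiring genuine care is the crucial point: one must verify that Lemma~\ref{lemma:alpha} is literally applicable (the pieces are factors of $s$ of length $<M\leq M$, and only finitely many of them occur, so $m_a$ and $M_a$ are well defined) and, in the irrational case, invoke $\theta\notin\rats$ to promote the weak inequalities coming from Lemma~\ref{lemma:alpha} to strict ones. Everything else is bookkeeping. (In fact the colour $\Gamma$ can be merged into $A$, since a prefixal factorization never uses a piece of length $\geq M$, which would show $s\in{\cal P}_3$; I keep four colours for clarity and to match the statement.)
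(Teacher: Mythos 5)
Your proof is correct and follows essentially the same route as the paper: a four-colouring separating non-prefixes, prefixes of length $\geq M$, and short prefixes split by an $a$-frequency threshold, with Lemma~\ref{lemma:alpha} supplying the contradiction; the only (immaterial) difference is that the paper uses the threshold $f'_a(s)$ itself rather than a point strictly between $f'_a(s)$ and $f''_a(s)$. Your parenthetical observation that the colour for long prefixes can be merged away, yielding ${\cal P}_3$, is also sound, since the standing hypothesis bounds the pieces of any prefixal factorization regardless of the colouring.
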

\begin{proof} For any non-empty factor $V$ of $s$ we set $f_a(V)= |V|_a/|V|$. We consider the
coloring map $c: \Ff^+(s)\rightarrow \{0, 1, 2, 3\}$ defined as follows. For any $V\in \Ff^+(s)$: $$ c(V)= \begin{cases}
                 0,   & \mbox{if  $V$ is not a prefix of $s$};\\
                 1,   & \mbox{if  $V$ is a prefix of $s$ such that $|V|<M$ and $f_a(V) > f'_a(s)$};\\
                 2,   & \mbox{if $V$ is  a prefix of $s$ such that $|V|<M$ and $f_a(V) \leq  f'_a(s)$}; \\
                 3, & \mbox{if $V$ is  a prefix of $s$ and $|V|\geq M$.}
                 \end{cases}$$
 Let $s= V_1V_2\cdots V_n \cdots$ be any factorization of $s$ in non-empty factors and suppose that the factorization is monochromatic, i.e., for all $i\geq 1$, $c(V_i) = x\in \{0, 1, 2, 3\}$.
 One has that   $x \neq 0$ since $V_1$ is a prefix of $s$. If $x=3$, then all $V_i$, $i\geq 1$, are prefixes of $s$ of length greater than or equal to $M$, which contradicts the assumption made on $s$.  If $x=1$, then one has that $m_a > f'_a(s)$. By Lemma~\ref{lemma:alpha}, $f'_a(s)\geq m_a$, so we reach a contradiction. Let $x=2$. If $f'_a(s) < f''_a(s)$, then one has $M_a\leq f'_a(s) < f''_a(s)\leq M_a$ a contradiction.

 Let us suppose  $f'_a(s) = f''_a(s)= f_a(s)$ is an irrational number.  If  $f_a(V) \leq  f'_a(s)$, then, as  $f_a(V)$ is a rational number, one must have  $f_a(V) <  f'_a(s)$. Hence, it follows $M_a < f_a(s) \leq M_a$ a contradiction.
\end{proof}

\begin{corollary}Let $s\in A^{\omega}$ be a non-recurrent infinite word such that there exists a letter $a\in A$ for which either $f'_a(s) < f''_a(s)$ or  $f'_a(s) = f''_a(s) = f_a(s)$ is an irrational number. Then $s \in {\cal P}_4$.
\end{corollary}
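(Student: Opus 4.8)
The plan is to imitate the proof of Proposition~\ref{prop:four}, using non-recurrence to supply a substitute for the uniform bound $M$ occurring in its hypothesis. If $s$ admits no prefixal factorization, then $s\in{\cal P}_2\subseteq{\cal P}_4$ by Proposition~\ref{prop:C}, so we may assume $s$ has a prefixal factorization; then its first letter is uniformly recurrent by Lemma~\ref{lemma:B}. Let $p$ be the shortest non-recurrent prefix of $s$; thus $p$ occurs in $s$ only finitely many times, and $|p|\ge 2$. As in the reasoning behind Corollary~\ref{nonrec}, any prefixal factorization $s=U_1U_2\cdots$ has all but finitely many blocks of length $<|p|$: a block $U_i$ with $|U_i|\ge|p|$ is a prefix of $s$, hence begins with $p$, hence produces an occurrence of $p$ in $s$, and distinct blocks sit at distinct positions.

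Next I would define a coloring $c:\Ff^+s\to\{0,1,2,3\}$, with $f_a(V):=|V|_a/|V|$, by setting $c(V)=0$ if $V$ is not a prefix of $s$; $c(V)=1$ if $V$ is a prefix of $s$ with $|V|<|p|$ and $f_a(V)>f'_a(s)$; $c(V)=2$ if $V$ is a prefix of $s$ with $|V|<|p|$ and $f_a(V)\le f'_a(s)$; and $c(V)=3$ if $V$ is a prefix of $s$ with $|V|\ge|p|$. I then claim no factorization $s=V_1V_2\cdots$ is monochromatic. The common color cannot be $0$, since $V_1$ is a prefix of $s$. It cannot be $3$: then each $V_i$ is a prefix of length $\ge|p|$, so $s=V_1V_2\cdots$ is a prefixal factorization with infinitely many blocks of length $\ge|p|$, contrary to the previous paragraph. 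If the color is $1$, then $m_a:=\min_i f_a(V_i)>f'_a(s)$ (the minimum is attained, since the $f_a(V_i)$ run over a finite set of rationals, the blocks having bounded length $<|p|$), contradicting Lemma~\ref{lemma:alpha}, which gives $f'_a(s)\ge m_a$. If the color is $2$, then $M_a:=\max_i f_a(V_i)\le f'_a(s)$, so Lemma~\ref{lemma:alpha} yields $f''_a(s)\le M_a\le f'_a(s)$, hence $f'_a(s)=f''_a(s)$; if the hypothesis alternative $f'_a(s)<f''_a(s)$ holds this is already absurd, and otherwise $f'_a(s)=f''_a(s)=f_a(s)$ is irrational while each $f_a(V_i)$ is rational, so $f_a(V_i)<f_a(s)$, whence $M_a<f_a(s)=f''_a(s)\le M_a$, again absurd. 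Therefore $s$ has no monochromatic factorization, and $s\in{\cal P}_4$.

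The step I would be most careful about — and the reason the result is not a literal instance of Proposition~\ref{prop:four} — is that non-recurrence does \emph{not} by itself provide a uniform bound on the block lengths of all prefixal factorizations: for example $010^{\omega}$ is non-recurrent, yet admits prefixal factorizations $(010^{k})\,0\,0\,0\cdots$ with arbitrarily long first block. The point that must be made precisely is that the weaker statement ``only finitely many blocks of any prefixal factorization have length $\ge|p|$'' still suffices to exclude the colour $3$, since a monochromatic colour-$3$ factorization would have \emph{infinitely} many such blocks. The remaining bookkeeping — that the frequency hypothesis is invoked exactly once, in the colour-$2$ case, and that in the irrational case one must upgrade $f_a(V_i)\le f_a(s)$ to the strict inequality $f_a(V_i)<f_a(s)$ before applying Lemma~\ref{lemma:alpha} — is routine, as is the observation that the minima and maxima above are attained because the blocks of colours $1$ and $2$ have bounded length.
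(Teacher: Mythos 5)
Your proof is correct, and it follows the same strategy as the paper: reduce to the four\-/colour scheme of Proposition~\ref{prop:four} with the threshold $M=|p|$ given by the shortest non-recurrent prefix. The paper's own proof is a one-line reduction: it asserts that for \emph{any} prefixal factorization $s=U_1U_2\cdots$ one has $|U_i|<|p_s|$ for \emph{all} $i$, ``since otherwise any $U_i$ has $p_s$ as a prefix and $p_s$ is recurrent,'' and then literally invokes Proposition~\ref{prop:four}. As you observe, that assertion is too strong: a single long block yields only one extra occurrence of $p_s$ and contradicts nothing, and your example $010^{\omega}$ (with $p_s=01$ and factorization $(010^{k})\,0\,0\cdots$) shows the uniform bound genuinely fails, so the corollary is not a literal instance of Proposition~\ref{prop:four}. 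What non-recurrence actually gives is exactly what Corollary~\ref{nonrec} states --- all but finitely many blocks satisfy $|U_i|<|p_s|$ --- and your rerun of the colouring argument correctly checks that this weaker fact still kills colour $3$ (a monochromatic colour-$3$ factorization would have infinitely many long prefix blocks), while colours $1$ and $2$ are handled verbatim as in Proposition~\ref{prop:four} via Lemma~\ref{lemma:alpha}, the blocks there being of bounded length $<|p|$ so that $m_a$ and $M_a$ are attained. So your proposal not only proves the statement but repairs an imprecision in the paper's own justification; the case split at the start (no prefixal factorization $\Rightarrow s\in{\cal P}_2$) and the appeal to Lemma~\ref{lemma:B} are harmless but dispensable, since your colouring excludes a monochromatic factorization directly in all cases.
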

\begin{proof} Let $p_s$ be the shortest non-recurrent prefix  and set $M= |p_s|$. For any prefixal factorization $s= U_1U_2\cdots U_n\cdots $ one must have  $|U_i|<M$, $i\geq 1$. Indeed, otherwise any $U_i$ has  $p_s$ as a prefix and $p_s$ is recurrent, a contradiction. From the preceding proposition the result follows.
\end{proof}

 \begin{example}  Let $s$ be the infinite word $s= baabaaf$, where $f$ is the Fibonacci word. The word $s$  is not recurrent and the shortest prefix which is not recurrent is $p_s= baabaaa$. One has that
 $\lim_{n\rightarrow \infty} |s_{[n]}|_a/n  = g-1$, where $g= \frac{1+ \sqrt 5  }{2}$ is the {\em golden ratio}.
 \end{example}

\section{Sturmian words}\label{Sec:sw}

A word $s\in \{a,b\}^{\omega}$ is called {\it Sturmian} if  it is aperiodic and {\em balanced}, i.e., for all factors $u$ and $v$ of $s$ such that $|u|=|v|$ one has
$$ | |u|_x-|v|_x| \leq 1, \ x\in \{a,b\}.$$

\begin{definition} We say that a Sturmian word is of type $a$ (resp.,  $b$) if it contains the factor $aa$  (resp., $bb$).
\end{definition}
\noindent Clearly a Sturmian word is either of type $a$ or of type $b$, but not both.

 Alternatively, a binary infinite word $s$ is Sturmian if  $s$ has a unique left (or equivalently right) special factor  of length $n$  for each  integer $n\geq 0$. In terms of factor complexity, this is equivalent to saying  that  $\lambda_s(n)= n+1$ for $n\geq 0$. As a consequence one derives that  a Sturmian word $s$ is {\em closed under reversal}, i.e., if $u$ is a factor of $s$, then so is its reversal $u^{\sim}$ (see, for instance, \cite[Chap. 2]{LO2}).

 A Sturmian word $s$ is called {\em standard} (or {\em characteristic})  if  all its prefixes are left special factors of $s$.
  As is well known (see, for instance, \cite[Chap. 2]{LO2}), for any Sturmian word $s$ there exists a standard Sturmian word $t$ such that
 $\Ff s = \Ff t$.

\begin{definition} Let $s\in \{a,b\}^{\omega}$ be a Sturmian word, $w$ a non-empty factor  of $s$, and  $z\in \{a, b\}$. We say $w$ is {\em rich} in the letter $z$ if there exists a factor $v$ of $s$ such that $|v|=|w|$ and $|w|_z>|v|_z$.
\end{definition}
From the aperiodicity and the balance property of a Sturmian word,  one easily derives that any non-empty factor $w$ of a Sturmian word $s$ is  rich either in the letter $a$ or in the letter $b$,  but not in both  letters.
Thus one can introduce, for any given Sturmian word $s$,
a map $$r_s :  \Ff ^+ s  \rightarrow \{a, b\}$$ defined as follows: for any non-empty factor $w$ of $s$, we set
$r_s(w)= z\in \{a, b\}$ if  $w$ is rich in the letter $z$. Clearly, $r_s(w)=r_s(w^{\sim})$ for any $w\in \Ff ^+ s$.

For any letter $z\in \{a, b\}$ let
${\bar z}$ be  the complementary letter of $z$, i.e., ${\bar a}= b$ and ${\bar b}= a$.
\begin{lemma}\label{lemma:lastletter} Let $w$ be a non-empty right special (resp., left special) factor
of a Sturmian word $s$. Then $r_s(w)$ is equal to the first letter of $w$ (resp.,  $r_s(w)$ is equal to the last letter of $w$).
\end{lemma}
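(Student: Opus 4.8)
The statement concerns a non-empty right special (resp.\ left special) factor $w$ of a Sturmian word $s$, and asserts that $r_s(w)$ equals the first letter of $w$ in the right special case (and, by the reversal symmetry $r_s(w)=r_s(w^\sim)$, the last letter in the left special case). Since $s$ is Sturmian, for each length $n$ there is a \emph{unique} right special factor, and it is a suffix of the unique left special factor of that length; moreover the unique left special factor is a prefix of the (standard) characteristic word having the same set of factors. The plan is to work with the characteristic word $t$ with $\Ff t = \Ff s$, let $u$ be the left special prefix of $t$ of length $n=|w|$, and exploit the well-known fact that $u$ is a palindrome (left special factors of a Sturmian word that are prefixes of the characteristic word are palindromes; equivalently $w$, being the unique right special factor of length $n$, equals $u^\sim=u$ only up to the palindrome structure — more precisely $w$ is the reversal of the left special factor). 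Because $r_s(w)=r_s(w^\sim)$, it suffices to prove the left special version: if $v$ is a non-empty left special factor of $s$, then $r_s(v)$ is the last letter of $v$.

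\textbf{Key steps.} First I would reduce to the characteristic word and take $v=t_{[n]}$, the length-$n$ prefix of the characteristic word $t$, which is left special. Let $x$ be its last letter; I want to show $v$ is rich in $x$, i.e.\ there is another factor $v'$ with $|v'|=n$ and $|v|_x>|v'|_x$. Since $v$ is left special, both $av$ and $bv$ occur in $s$; in particular $\bar x v$ occurs. Consider the length-$n$ factor $v'$ obtained by "shifting": $\bar x v = \bar x (v\text{ without its last letter})\,x$, and let $v'$ be the length-$n$ prefix of $\bar x v$, namely $v' = \bar x\cdot(v\text{ with last letter deleted})$. Then $v'$ is a factor of $s$ of length $n$, $v$ ends in $x$ while $v'$ ends in the letter just before the last letter of $v$ — this alone is not yet enough, so the real comparison must be made via the balance property. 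The cleaner route: among all length-$n$ factors, the right special one $w$ (the reversal of $v$) is characterized as the one whose both extensions $wa,wb$ occur; I would instead directly count occurrences. Here is the count I would carry out: by the three-gap / balance structure, for each length $n$ exactly one length-$n$ factor is rich (the "heavy" one for the majority letter) and it is precisely the unique right/left special factor; then identify its extreme letter. Concretely, the characteristic word's prefix $v$ of length $n$ is rich in the letter $z$ where $z$ is the last letter of $v$ because $v$ is a palindrome and its first letter is determined by which of $av,bv$ is "more extendable", and the balance inequality $||v|_z - |v'|_z|\le 1$ forces $|v|_z = |v'|_z + 1$ for the appropriate competitor $v'$.

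\textbf{Main obstacle.} The crux is producing, for the candidate letter $x$ (the last letter of the left special $v$, equivalently the first letter of the right special $w$), an explicit competing factor $v'$ of the same length with strictly fewer occurrences of $x$ — and ensuring the inequality is strict rather than an equality. I expect this to hinge on the interplay between left specialness (both $av$ and $bv$ are factors) and the balance property applied to the words $av$ and $bv$ of length $n+1$: exactly one of them, say $xv$ prolonged appropriately versus $\bar x v$, must carry the extra copy of $x$, and chasing which one does so — using that $v$ is a prefix of the characteristic word and hence a palindrome, so its last letter equals its first letter reversed under the standard recursive structure — pins down $x$. The routine balance-counting bookkeeping I would not belabor; the conceptual step is choosing $v'$ to be (a length-$n$ window inside) the "other" one-letter extension, and invoking $||v|_x-|v'|_x|\le 1$ together with $|v|_x\ne|v'|_x$ (which follows since $v$ and $v'$ differ in a controlled way forced by specialness) to conclude $|v|_x = |v'|_x+1$, i.e.\ $r_s(v)=x$. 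Passing back from $v$ to $w$ via $r_s(w)=r_s(w^\sim)$ and the fact that the right special $w$ of length $n$ is the reversal of the left special $v$ of length $n$ then completes the proof.
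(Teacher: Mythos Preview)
Your proposal actually contains the complete argument, but you fail to recognise it and then wander off into unnecessary territory. In the left-special version you write $v=v''x$ (so $x$ is the last letter of $v$), observe that $\bar x v$ is a factor since $v$ is left special, and set $v'=\bar x\,v''$, the length-$|v|$ prefix of $\bar x v$. At this point you say ``this alone is not yet enough'' --- but it \emph{is} enough: simply count the letter $x$. We have $|v|_x=|v''|_x+1$ while $|v'|_x=|v''|_x$, so $|v|_x>|v'|_x$ and hence $r_s(v)=x$. Done. The balance property, the characteristic word, palindromicity, and the three-gap structure play no role whatsoever.

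The paper's proof is exactly this two-line shift argument, phrased for the right-special case: write $w=zw'$; since $w$ is right special, $w\bar z=zw'\bar z$ is a factor, hence so is $v=w'\bar z$; then $|v|=|w|$ and $|w|_z=|w'|_z+1>|w'|_z=|v|_z$, so $r_s(w)=z$. The left-special case follows by reversal. Your detour through the characteristic word and palindromic prefixes is not wrong in spirit, but it introduces machinery that is never used and leaves the actual verification (``routine balance-counting bookkeeping I would not belabor'') unperformed --- when in fact that bookkeeping is a single subtraction and constitutes the entire proof.
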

\begin{proof} Write $w=zw'$ with $z\in \{a, b\}$ and $w'\in \{a, b\}^*$. Since $w$ is a right special factor of $s$,  one has that $v=w'{\bar z}$ is a factor of $s$. Thus  $|w|= |v|$ and $|w|_z >|v|_z$, whence
$r_s(w)= z $. Similarly, if $w$ is left special,  one deduces that $r_s(w)$ is equal to the last letter of $w$.
\end{proof}

\subsection{Preliminary Lemmas}

In this section we develop some new combinatorial properties of Sturmian words which will be used to show that the coloring problem has a positive answer for Sturmian words.

\begin{lemma}\label{lemma:cp} Let $s$ be a non-periodic infinite  word  having a prefixal factorization
$$ s = U_1\cdots U_n \cdots.$$
If  $c$ is the first letter of $s$, then $U_1\not\in c^+$.
\end{lemma}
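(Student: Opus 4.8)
The plan is to argue by contradiction. Suppose $U_1\in c^+$, say $U_1=c^k$ with $k\ge 1$, and aim to conclude that $s=c^\omega$, contradicting the hypothesis that $s$ is non-periodic. The starting observations are elementary: since $U_1=c^k$ is a prefix of $s$, the word $s$ begins with $c^k$, and since by definition every block $U_i$ of a prefixal factorization is a non-empty prefix of $s$, each $U_i$ begins with the letter $c$. The point is to exploit the first place where $s$ departs from $c^\omega$ and show that it cannot be accommodated by a block that is itself a prefix of $s$.

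Concretely, as $s$ is non-periodic we have $s\neq c^\omega$, so there is a least index $m+1$ with $s_{m+1}\neq c$; since $s$ begins with $c^k$ this forces $m\ge k\ge 1$, and $s_1=\cdots=s_m=c$. Now track the position $m+1$ through the factorization: set $p_0=0$ and $p_i=|U_1|+\cdots+|U_i|$, and let $j$ be the least index with $p_j\ge m+1$ (this exists because $p_i\to\infty$). Because $p_1=k\le m$, we get $j\ge 2$, hence $p_{j-1}\ge p_1=k\ge 1$, while minimality of $j$ gives $p_{j-1}\le m$. The block $U_j$ occupies exactly positions $p_{j-1}+1,\dots,p_j$ of $s$, so position $m+1$ lies inside $U_j$ at internal index $m+1-p_{j-1}$, and from $1\le p_{j-1}\le m< m+1\le p_j$ one checks $1\le m+1-p_{j-1}\le m$ and $m+1-p_{j-1}\le |U_j|$. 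Hence the $(m+1-p_{j-1})$-th letter of $U_j$ equals $s_{m+1}\neq c$. On the other hand $U_j$ is a prefix of $s$, so its $(m+1-p_{j-1})$-th letter is $s_{m+1-p_{j-1}}$, which is $c$ because $m+1-p_{j-1}\le m$. This contradiction proves $U_1\notin c^+$.

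I do not expect a real obstacle: the argument is short and uses nothing beyond the definition of a prefixal factorization and non-periodicity (in particular nothing Sturmian, consistent with the general statement). The only step needing a little care is the position bookkeeping, namely verifying that the internal index $m+1-p_{j-1}$ is a legitimate index into $U_j$ (i.e.\ $1\le m+1-p_{j-1}\le |U_j|$); this is precisely where one must invoke $p_{j-1}\ge k\ge 1$ (so that $j\ge 2$ and $m+1-p_{j-1}\le m$) together with $m+1\le p_j$ (so that $m+1-p_{j-1}\le |U_j|$).
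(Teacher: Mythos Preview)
Your proof is correct and follows essentially the same contradiction strategy as the paper: both locate the first place where the all-$c$ pattern breaks and exploit the prefixal property to derive a contradiction. The only cosmetic difference is that the paper singles out the first block $U_j\notin c^+$ and uses the word equation $U_1\cdots U_{j-1}U_j=U_j\xi$ (with $U_1\cdots U_{j-1}=c^q$) to force $U_j\in c^+$, whereas you track the first position $m+1$ with $s_{m+1}\neq c$ and compare letters directly; the underlying idea is the same.
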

\begin{proof} Suppose that $U_1=c^p$, with $p\geq 1$.  Since $s$ is non-periodic and  $U_i$ is a prefix of $s$ for  $i\geq1$, there exists a minimal integer $j$
such that  $\card(\alf U_j) >1$.  Since $U_j$ is a prefix of $s$, we have   $U_1\cdots U_{j-1}U_j = U_j \xi$, with $\xi \in A^*$.  As $U_1\cdots U_{j-1} = c^q$ for a suitable $q\geq p$, it follows that
$\xi= c^q$ and $U_j\in c^+$, a contradiction.
\end{proof}

\begin{lemma}\label{theorem:primo} Let $s$ be a Sturmian word such that
$$ s= U_1U_2 \cdots U_n \cdots,$$
where  the $U_i$'s are non-empty factors of $s$. If $r_s(U_i)= r_s(U_j)$ for all  $i, j \geq 1$, then
the sequence $|U_i|$, $i\geq 1$,  is unbounded.
\end{lemma}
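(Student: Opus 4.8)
The plan is to argue by contradiction: suppose $s = U_1U_2\cdots$ is a factorization of a Sturmian word into non-empty factors, all rich in the same letter — say, without loss of generality, rich in $a$ — and suppose moreover that the lengths $|U_i|$ are bounded, say $|U_i| \le M$ for all $i$. We want to derive a contradiction with the balance property (or equivalently with the existence of irrational letter frequencies in a Sturmian word). The key quantitative input will be Lemma~\ref{lemma:alpha}: since each $U_i$ is a factor of $s$ of length at most $M$, the frequency $f_a(s)$ of the letter $a$ in $s$ (which exists and is irrational, a standard fact about Sturmian words) satisfies $m_a \le f_a(s) \le M_a$, where $m_a = \min_i f_a(U_i)$ and $M_a = \max_i f_a(U_i)$.

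**The core estimate.** The heart of the matter is to translate ``$U_i$ is rich in $a$'' into a lower bound on $f_a(U_i) = |U_i|_a/|U_i|$ that is uniformly bounded away from — in fact, strictly above in the right sense — the Sturmian frequency $f_a(s)$. For a Sturmian word with frequency $\alpha = f_a(s)$, the balance property says that for any factor $w$, $|w|_a \in \{\lfloor |w|\alpha\rfloor, \lceil |w|\alpha\rceil\}$, and $w$ is rich in $a$ precisely when $|w|_a = \lceil |w|\alpha \rceil$ (and, since $\alpha$ is irrational, $|w|\alpha$ is never an integer, so this equals $\lfloor |w|\alpha\rfloor + 1$). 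Thus for each $i$, writing $n_i = |U_i|$, we have $|U_i|_a = \lfloor n_i\alpha\rfloor + 1 > n_i\alpha$, so $f_a(U_i) > \alpha = f_a(s)$ for every $i$. Consequently $m_a = \min_i f_a(U_i) > \alpha$: here the finiteness of the set $\{f_a(U_i) : i \ge 1\}$ — which holds because all $n_i \le M$, so only finitely many rational values occur — is exactly what upgrades the strict inequalities $f_a(U_i) > \alpha$ into a strict inequality for the minimum. But then $\alpha \ge m_a > \alpha$ by Lemma~\ref{lemma:alpha}, a contradiction.

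**Remaining details and the main obstacle.** Two supporting facts need to be invoked cleanly: first, that a Sturmian word has a well-defined irrational letter frequency $\alpha$ (standard — it follows from uniform recurrence plus the balance property, or can be cited from \cite{LO2}); and second, the characterization of ``rich in $a$'' in terms of $|w|_a = \lfloor |w|\alpha\rfloor + 1$. This second point is where I expect the main (though still routine) work to lie: one must check that for a Sturmian word, the two values $|w|_a$ can take over all factors $w$ of a fixed length $n$ are exactly $\lfloor n\alpha\rfloor$ and $\lfloor n\alpha\rfloor + 1$, and that a factor achieves the larger value iff it is rich in $a$ in the sense of the definition preceding Lemma~\ref{lemma:lastletter}. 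This is immediate from the balance inequality $||u|_a - |v|_a| \le 1$ together with the fact that $n\alpha$ lies strictly between consecutive integers; the richer factors realize the surplus of one extra $a$. The only subtlety is making sure $\alpha$ really is irrational — but this is precisely the content of aperiodicity for a balanced word, since a rational frequency would force (together with balance) eventual periodicity. With these in hand, the contradiction is as above, and the lemma follows.
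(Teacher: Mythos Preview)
Your proposal is correct and follows essentially the same approach as the paper: argue by contradiction, use the boundedness of the $|U_i|$ to reduce to finitely many values of $f_a(U_i)$, show each $f_a(U_i)>f_a(s)$ using richness plus irrationality of the frequency, and then derive the contradiction $f_a(s)\ge m_a>f_a(s)$ via Lemma~\ref{lemma:alpha}. The only cosmetic difference is that the paper obtains $|U_i|_a>|U_i|\,f_a(s)$ by citing the inequality $|V|f_x(s)-1<|V|_x$ from \cite[Proposition~2.1.10]{LO2} together with $|U_i|_a=|V_i|_a+1$, whereas you phrase the same step via the two-value characterization $|w|_a\in\{\lfloor n\alpha\rfloor,\lfloor n\alpha\rfloor+1\}$; these are equivalent formulations of the same standard fact.
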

\begin{proof} Suppose to the contrary that  for some positive integer $M$ we have that  $|U_i|\leq M$ for each $i\geq 1$. This
implies that the number of distinct $U_i$'s in the sequence $(U_i)_{i\geq 1}$ is finite, say $t$.
Let $ r_s(U_i)=x\in \{a, b\}$ for all $i\geq1$ and set for every  $i\geq 1$:
$$ f_x(U_i) = \frac{|U_i|_x}{|U_i|}.$$
Thus $\{f_x(U_i) \mid i\geq 1\}$ is a finite set of at most $t$ rational numbers. We set $m_x= \min\{f_x(U_i) \mid i\geq 1\}$.

 As is well known \cite [Proposition  2.1.11] {LO2},
 the frequency $f_x(s)$ of the letter $x$ in $s$ exists and is an irrational number:
 $$f'_x(s)= f''_x(s)=  f_x(s)=\lim_{n\rightarrow\infty} \frac{|s_{[n]}|_x}{n}.$$

Let us now prove that  $m_x >f_x(s)$. From \cite[Proposition 2.1.10] {LO2} one derives that for
all $V\in \Ff s$
$$  |V|f_x(s) -1 < |V|_x <   |V|f_x(s)+1.$$
Now $U_i$ is rich in the letter $x$  for all $i\geq 1$,  so that there exists $V_i \in \Ff s$ such that
$|U_i|= |V_i|$ and $|U_i|_x>|V_i|_x$.
From the preceding inequality one has
$$|U_i|_x = |V_i|_x+1>  |V_i|f_x(s)=  |U_i|f_x(s),$$
so that for all $i\geq 1$, $f_x(U_i)> f_x(s)$, hence $m_x >f_x(s)$.
By Lemma~\ref{lemma:alpha},   one would have $f_x(s)\geq m_x$
a contradiction.
\end{proof}

In the following we shall consider the Sturmian morphism $R_a$, that we simply denote $R$,
defined as follows:
\begin{equation}\label{morf:erre}
R(a)= a \  \mbox{and} \  R(b)=ba.
\end{equation}

For any finite or infinite word $w$, let $\Pre w$  denote  the set of all  prefixes of $w$.

\begin{lemma}\label{lemma:onetwo}
 Let $s$ be a Sturmian word and  $t\in \{a,b\}^{\omega}$ such that  $R(t)=s$. If either
 \begin{itemize}

\item [1)] the first letter of $t$ (or, equivalently, of $s$) is $b$

\vspace{2mm}

  or

  \vspace{2 mm}

\item[ 2)] the Sturmian word $s$ admits a prefixal factorization:
$$s = U_1\cdots U_n\cdots,$$
where each $U_i$, $i\geq 1$, terminates with the letter $a$ and  $r_s(U_i)=r_s(U_j)$ for all $i,j \geq 1$,
\end{itemize}
then $t$ is also Sturmian.
\end{lemma}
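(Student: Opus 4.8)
The plan is to show that $t$ is balanced and aperiodic, since these two properties characterize Sturmian words. Aperiodicity of $t$ is immediate: if $t$ were ultimately periodic then $s = R(t)$ would be ultimately periodic as well (applying a morphism to an ultimately periodic word yields an ultimately periodic word), contradicting the fact that $s$ is Sturmian, hence aperiodic. So the whole content lies in proving that $t$ is balanced, and the main obstacle is to translate the balance of $s$ together with the hypothesis of case 1) or case 2) into balance of $t$.

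First I would analyze the structure of $R$. Since $R(a)=a$ and $R(b)=ba$, the image $R(t)$ is obtained by reading $t$ and inserting an $a$ immediately before each occurrence of $b$; equivalently, the occurrences of $b$ in $s=R(t)$ are in bijection with the occurrences of $b$ in $t$, and each $b$ of $s$ is immediately preceded by an $a$. Consequently $s$ contains no factor $bb$, so $s$ is necessarily a Sturmian word of type $a$. Given a factor $w$ of $t$, the word $R(w)$ is \emph{almost} a factor of $s$: more precisely, if $w$ occurs in $t$ and $w$ is not a prefix of $t$, then $w$ is preceded in $t$ by some letter, and tracking how $R$ acts one sees that $R(w)$, possibly after prepending or stripping a single leading $a$, is a factor of $s$. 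The key quantitative relation is that for any finite word $u\in\{a,b\}^*$ one has $|R(u)| = |u| + |u|_b$ and $|R(u)|_b = |u|_b$, hence $|R(u)|_a = |u|_a + |u|_b$.

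Now suppose, for contradiction, that $t$ is not balanced. Then there exist factors $u,v$ of $t$ with $|u|=|v|=n$ and $|u|_b \geq |v|_b + 2$. The standard refinement (as in the classical theory of balanced words) lets us take $u = x p x$ and $v = y p y$ with $p$ a common factor, $x = b$, $y = a$: that is, $t$ contains $bpb$ and $apa$ of the same length with $|bpb|_b = |apa|_b + 2$. Applying $R$ and using the length formulas above, $R(bpb) = ba\,R(p)\,ba$ and $R(apa) = a\,R(p)\,a$; these have lengths $|R(p)|+4$ and $|R(p)|+2$ respectively, so they are not of equal length, but $b R(p) b$ (length $|R(p)|+2$, occurring in $s$ up to a leading $a$ adjustment since every $b$ in $s$ is preceded by $a$) and $a R(p)' a$ of the same length can be extracted to exhibit an imbalance in $s$ — this is the delicate bookkeeping step. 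The point of the two hypotheses is exactly to guarantee that such $u,v$ can be chosen \emph{inside} $t$ rather than only as prefixes: in case 1) the first letter of $t$ is $b$, which controls the ambiguity of the preimage at the very beginning (the one place where "prepend a leading $a$" could fail); in case 2), the prefixal factorization of $s$ into blocks $U_i$ each ending in $a$ and all with the same richness $r_s(U_i)$ forces, via Lemma~\ref{theorem:primo}, the block lengths to be unbounded, and the fact that each $U_i$ ends in $a$ means each $U_i$ lies in the image of $R$ (a word ending in $a$ and containing no $bb$ decomposes uniquely over $\{a, ba\}$), so $t$ inherits a genuine factorization $t = R^{-1}(U_1)R^{-1}(U_2)\cdots$ whose blocks are honest factors of $t$; any imbalance in $t$ then appears within a long enough concatenation of these blocks and pushes back to an imbalance in $s$.

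The hard part will be the careful case analysis at the boundary — making rigorous the claim that a would-be imbalance $bpb$ versus $apa$ in $t$ really does produce two equal-length factors of $s$ with $b$-counts differing by at least $2$, keeping precise track of the single leading/trailing $a$ that $R$ inserts or omits. I expect to handle this by working with the two explicit occurrences: choose occurrences of $bpb$ and $apa$ in $t$ at positions $i$ and $j$, look at the corresponding factors of $s$ delimited by the images of these occurrences, and observe that $s[R\text{-image of } i \ldots] $ starts with $b$ (preceded by an inserted $a$) while $s[R\text{-image of } j\ldots]$ starts with $a$; aligning the two windows of the same length $|R(p)| + 2$ immediately after these initial letters yields factors of $s$ whose $b$-counts are $|R(p)|_b + 1$ and $|R(p)|_b - 1$ respectively, the desired contradiction with the balance of $s$. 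In case 2) the unboundedness from Lemma~\ref{theorem:primo} is what rules out the degenerate situation where all the relevant preimage blocks are too short to contain an imbalance.
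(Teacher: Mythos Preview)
Your overall strategy coincides with the paper's: reduce to balance, invoke the standard fact that unbalance of $t$ produces a word $p$ with $apa,\,bpb\in\Ff t$, push these through $R$, and use the hypothesis (1) or (2) to manufacture an extra leading $a$ in front of $R(apa)$ so as to contradict the balance of $s$.

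However, your final bookkeeping is off. Comparing windows of length $|R(p)|+2$ cannot give a contradiction: every length-$(|R(p)|+2)$ factor of $baR(p)ba$ has exactly $|R(p)|_b+1$ letters $b$, while $aR(p)a$ has $|R(p)|_b$; the difference is $1$, not $2$, so balance of $s$ is not violated. The correct comparison is at length $|R(p)|+3$: one needs $aaR(p)a$ versus $baR(p)b$, whose $b$-counts are $|R(p)|_b$ and $|R(p)|_b+2$. The factor $baR(p)b$ is automatic (it sits inside $R(bpb)=baR(p)ba$), but $aaR(p)a$ requires an extra $a$ preceding $R(apa)=aR(p)a$ in $s$. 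This is exactly where the hypotheses enter, and the paper's argument is sharper than your sketch: if $apa$ is \emph{not} a prefix of $t$, then it is preceded in $t$ by some letter $c$, and since $R(c)$ ends in $a$ one gets $aaR(p)a\in\Ff s$ directly. Case 1) forces $apa\notin\Pre t$ because $t$ begins with $b$. In case 2), if $apa\in\Pre t$ then $aR(p)a\in\Pre s$; now Lemma~\ref{theorem:primo} gives some $U_i$ with $i\geq 2$ and $|U_i|>|aR(p)a|$, and since $U_{i-1}$ ends in $a$ one obtains $aaR(p)a\in\Ff s$. Your description of case 2) (``any imbalance appears within a long enough concatenation of blocks'') is in the right spirit but does not isolate this step; the point is not that the $R^{-1}(U_i)$ are long, but that the prefix $aR(p)a$ of $s$ recurs inside some later $U_i$ and there inherits a preceding $a$ from $U_{i-1}$.
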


\begin{proof} Let us prove that in both  cases $t$ is balanced. Suppose to the contrary that $t$ is unbalanced.
Then (see \cite[Proposition 2.1.3]{LO2}) there would exist $v$ such that
$$ava, bvb \in \Ff t.$$
Thus
$$aR(v)a, \ baR(v)ba \in \Ff s.$$
If $ava \not\in \Pre t$, then  $t= \lambda ava \mu$, with $\lambda\in \{a, b\}^+$ and $\mu\in  \{a,b\}^{\omega}$. Therefore $R(t)= R(\lambda)R(ava)R(\mu)$. Since the last letter of $R(\lambda)$ is $a$, it follows that  $aaR(v)a \in \Ff s$. As $baR(v)b\in \Ff s$, we reach a contradiction with the balance
property of $s$. In  case 1), $t$ begins with the letter $b$, so that $ava \not\in \Pre t$ and then $t$ is balanced.
In  case 2) suppose that $ava\in \Pre t$. This implies that $aR(v)a \in \Pre s$. From Lemma \ref{theorem:primo}, in the prefixal factorization of  $s$ there exists an integer $i>1$ such that $|U_i|> |aR(v)a|$.
Since $U_{i-1}$ terminates with $a$ and $U_{i-1}U_i \in\Ff s $, it follows that $aaR(v)a \in \Ff s$ and this
again contradicts  the balance property of $s$. Hence, $t$ is balanced.

Trivially, in both cases $t$ is aperiodic, so that $t$ is Sturmian.
\end{proof}
Let us remark that, in general, without any additional  hypothesis, if $s=R(t)$, then $t$ need not  be Sturmian. For instance, if $f$ is the Fibonacci word $f= abaababaaba\cdots$, then $af$ is also a Sturmian word. However, it is readily verified that in this case
the word $t$ such that $R(t)=s$ is not balanced, so that $t$ is not Sturmian.

\subsection{Main results}

\begin{theorem}\label{theorem:basic1} Let $s$ be a Sturmian word  having a prefixal factorization
$$ s = U_1U_2 \cdots.$$
Then there exist integers $i,j \geq 1$ such that
$r_s(U_i)\neq r_s(U_j)$.
\end{theorem}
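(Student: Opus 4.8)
The plan is to proceed by contradiction. Suppose the theorem fails and choose, among all pairs consisting of a Sturmian word together with a prefixal factorization of it whose blocks all take the same value under $r_s$, one — say $s=U_1U_2\cdots$ with $r_s(U_i)=x\in\{a,b\}$ for every $i$ — for which $|U_1|$ is minimal. Since $s$ is Sturmian it is non-periodic, so Lemma~\ref{lemma:cp} gives that $U_1$ is not a power of the first letter $c$ of $s$, and Lemma~\ref{theorem:primo} gives that the lengths $|U_i|$ are unbounded. Applying the exchange of letters $a\leftrightarrow b$ if necessary (which sends such a counterexample to one of the opposite type without changing $|U_1|$ or any of these features), we may and do assume that $s$ is of type $a$, i.e. $bb\notin\Ff s$; then $s=R(t)$ for a unique $t\in\{a,b\}^\omega$, where $R$ is the morphism of \eqref{morf:erre}.

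First I would record an elementary consequence of $bb\notin\Ff s$: if some block $U_i$ ends with $b$, then the letter of $s$ immediately after this $b$ is $a$ and it is the first letter of $U_{i+1}$; since $U_{i+1}$ is a prefix of $s$, the word $s$ begins with $a$. Contrapositively, if $s$ begins with $b$ then every $U_i$ ends with $a$.

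The core of the proof is the case in which every $U_i$ terminates with $a$. Here the factorization can be pulled back through $R$. Indeed, $\{a,ba\}$ is a prefix code, and the prefixes of $s=R(t)$ that end with $a$ are exactly the words $R(p)$ with $p$ a prefix of $t$ (a prefix of $s$ ending elsewhere ends inside a block $R(b)=ba$, hence in $b$); thus each $U_i=R(V_i)$ for a unique prefix $V_i$ of $t$, and injectivity of $R$ turns $s=R(t)=\prod_iR(V_i)$ into the prefixal factorization $t=V_1V_2\cdots$. By item (2) of Lemma~\ref{lemma:onetwo}, whose hypotheses are precisely what we have, $t$ is again Sturmian. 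Next one checks that $R$ preserves richness: if $\beta=f_b(t)$ then $f_b(s)=\beta/(1+\beta)$, and for every non-empty $V\in\Ff t$ one has $|R(V)|=|V|+|V|_b$, $|R(V)|_b=|V|_b$, and — using the estimate $|V|\beta-1<|V|_b<|V|\beta+1$ that holds in every Sturmian word and was already invoked in the proof of Lemma~\ref{theorem:primo} — a short computation gives $\floor{|R(V)|f_b(s)}=\floor{|V|\beta}$; since $r_s(w)=b$ iff $|w|_b=\floor{|w|f_b(s)}+1$ (and likewise in $t$), this yields $r_t(V)=r_s(R(V))$ for all $V$, and in particular $r_t(V_i)=x$ for every $i$. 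Finally $|U_1|=|V_1|+|V_1|_b$, and Lemma~\ref{lemma:cp} applied to $t$ with its factorization $t=V_1V_2\cdots$ shows that $V_1$ is not a power of the first letter of $t$, so $|V_1|_b\ge1$ and $|V_1|<|U_1|$ — contradicting the minimality of $|U_1|$. By the dichotomy of the previous paragraph this also settles the case where $s$ begins with $b$.

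There remains the case where $s$ begins with $a$ and at least one $U_i$ ends with $b$; I expect this to be the main obstacle. The plan is to reduce it to the previous case by first establishing that the blocks of a constant-richness prefixal factorization of a Sturmian word cannot mix endings — they all end with the same letter — and then, when that common letter is $b$ (so $s$ begins with $a$ by the dichotomy), running the analogous descent through the mirror morphism $a\mapsto a$, $b\mapsto ab$: this is a legitimate desubstitution of $s$ because $bb\notin\Ff s$ and $s$ does not begin with $b$, its images ending in $b$ are precisely the images of prefixes ending in $b$, and the length bookkeeping together with the mirror analogue of Lemma~\ref{lemma:onetwo} gives the same strict decrease of $|U_1|$. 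The delicate point is the no-mixing statement, which I would prove directly from the balance property together with the density-versus-frequency argument already used in Lemma~\ref{theorem:primo} and Lemma~\ref{lemma:alpha}: if blocks ending in $a$ and blocks ending in $b$ both occurred, controlling the partial letter densities of $s$ along the factorization against the irrational frequencies $f_a(s),f_b(s)$ would force a contradiction of exactly the type exploited there. Pinning down this no-mixing fact (and the mirror analogue of Lemma~\ref{lemma:onetwo}) is the crux of the argument; everything else is the routine descent above.
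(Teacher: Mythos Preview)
Your overall architecture coincides with the paper's: pick a counterexample with $|U_1|$ minimal, reduce to type~$a$, and descend through one of the two Sturmian morphisms $R:a\mapsto a,\ b\mapsto ba$ or $L:a\mapsto a,\ b\mapsto ab$ to produce a smaller counterexample. Your treatment of the case in which every $U_i$ ends with $a$ is essentially the paper's Case~2 (Lemma~\ref{prop:basic}); the richness-preservation argument there is slightly different in form (the paper builds an explicit comparison factor $F_i=R(V'_i)$ rather than going through $\lfloor |w|f_b(s)\rfloor$), but the content is the same.

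The gap is in the remaining case. You stake the argument on a ``no-mixing'' claim --- that in a constant-richness prefixal factorization all $U_i$ end with the same letter --- and you acknowledge this as the crux while offering only a vague density sketch. This is both stronger than what is needed and not what the paper proves. The paper's Lemma~\ref{prop:twocases} establishes the weaker dichotomy: either every $U_i$ ends with $a$, or every $U_ia$ is a \emph{prefix} of $s$. The proof is short and purely combinatorial: once some $U_i$ with $i\ge2$ ends in $b$, the preceding block supplies a left $a$, forcing $r_s(U_i)=b$; then for every $j$ one gets $U_jb\notin\Ff s$ (else $U_j$, beginning with $a$, would be rich in $a$), hence $U_ja\in\Pre s$. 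The case $i=1$ is handled with Lemma~\ref{theorem:primo}. Crucially, ``$U_ia\in\Pre s$'' is exactly the synchronization condition for the code $\{a,ab\}$ that makes $U_i\in L(\{a,b\}^*)$, so the $L$-descent goes through without ever knowing the last letter of $U_i$. Moreover, because $L$ is a \emph{standard} Sturmian morphism, the preimage $t$ with $s=L(t)$ is automatically Sturmian, so no ``mirror analogue of Lemma~\ref{lemma:onetwo}'' is required. Replace your no-mixing step by this dichotomy and the proof closes immediately.
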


The key idea of the proof consists of  showing that if one supposes to the contrary  that  a Sturmian word $s$ has a factorization $s= U_1U_2\cdots$ in equally rich prefixes, then
there would exist another Sturmian word $t$ which can be factorized in equally rich prefixes $t= V_1V_2\cdots $ but with $|V_1|<|U_1|$. In this way one reaches a contradiction. We need the following two lemmas.

\begin{lemma}\label{prop:twocases} Let $s$ be a Sturmian word of type $a$ having a prefixal factorization
$$ s = U_1\cdots U_n \cdots,$$
 such that $r_s(U_i)=r_s(U_j)$ for all $i,j\geq 1$.
Then one of the  following two properties holds:
\begin{itemize}
\item[i)] All $U_i$, $i \geq 1$, terminate with the letter $a$.
\item[ii)] All  $U_ia$, $i\geq 1$, are prefixes of $s$.
\end{itemize}
\end{lemma}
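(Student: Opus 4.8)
The plan is to analyze the interaction between a prefixal factorization $s=U_1U_2\cdots$ of a type-$a$ Sturmian word $s$ (with all $U_i$ equally rich) and the last letters of the blocks $U_i$. First I would recall two basic facts about a Sturmian word $s$ of type $a$: it does \emph{not} contain the factor $bb$, and, since it is closed under reversal and has a unique right special factor of each length, its set of factors is rather rigid. In particular, because $s$ is of type $a$, the letter $b$ is always isolated, so every occurrence of $b$ in $s$ is both preceded and followed by $a$ (except possibly a leading $b$, but here the first letter of $s$ equals the first letter of $U_1$, which by Lemma~\ref{lemma:cp} is not a power of a single letter and in fact $s$ begins with $a$ in the interesting case — I would dispose of the $b$-initial case separately using part~1 of Lemma~\ref{lemma:onetwo} if needed).

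The core dichotomy I would set up is: either some $U_i$ ends with $b$, or none does. If none does, we are in case i) and there is nothing more to prove. So suppose some block ends with $b$; I must then show we are forced into case ii), i.e. that $U_ia\in\Pre s$ for \emph{every} $i\ge 1$. The key step is this: if $U_i$ ends with $b$, then because $s$ is of type $a$ the letter following this occurrence of $U_i$ in the factorization (namely the first letter of $U_{i+1}$, which is the first letter of $s$) together with the isolated-$b$ property forces the first letter of $s$ to be $a$; moreover $U_ia$ occurs in $s$ (read off from $U_iU_{i+1}$), and since $U_i$ is a prefix of $s$, the word $U_ia$ is a prefix of $s$ provided $U_i$ is not right special — but $U_i$ \emph{cannot} be right special with its given richness, by Lemma~\ref{lemma:lastletter}: a right special factor is rich in its first letter, whereas a factor ending in $b$ in a type-$a$ word, being forced to look like $a\cdots ab$, is rich in $b$ by a counting/balance argument, a contradiction unless things line up. So $U_ib$ is not a factor, hence $U_ia\in\Pre s$. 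Then I would propagate this: once one block ending in $b$ forces the prefix $U_ia$ of $s$, and using that $U_1,\dots$ are all prefixes of $s$ of possibly varying lengths together with the unboundedness from Lemma~\ref{theorem:primo}, every $U_j$ is a prefix of a sufficiently long $U_i$, and the "next letter after $U_j$ inside $s$" is always $a$ whenever $U_j$ ends in $b$; combined with the observation that if even one block ends in $b$ then \emph{all} blocks must (otherwise we would have two blocks with different last letters, but that is allowed — so instead I would argue directly that a block ending in $a$ trivially has $U_ja\in\Pre s$ or we are already done) we land in case ii).

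The main obstacle I anticipate is the bookkeeping that shows $U_ia\in\Pre s$ \emph{uniformly} for all $i$, not just for the particular indices where $U_i$ ends in $b$: one has to rule out the possibility that $s$ has a prefix of the form $U_iaa\cdots$ for some $i$ and a prefix $U_ja\cdots$ with $U_jaa\notin\Pre s$ for another $j$ whose block ends in $a$ — essentially one must show the blocks ending in $a$ are automatically handled. Here I would invoke Lemma~\ref{theorem:primo} to get an arbitrarily long block $U_N$; every $U_j$ is then a prefix of $U_N$, and whether $U_ja$ is a prefix of $s$ is determined by whether the letter at position $|U_j|+1$ of $U_N$ is $a$. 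Using the isolated-$b$ property one last time: if $U_j$ ends in $b$ then that next letter must be $a$; if $U_j$ ends in $a$, then either the next letter is $a$ (fine, $U_ja\in\Pre s$) or it is $b$ and then $U_j b\in\Pre s$ — but then the \emph{previous} letter of $s$ before this $b$ is $a$ (the last letter of $U_j$), consistent, so this case is the genuinely possible one and it is exactly the case where the factorization's $j$-th cut falls right before an isolated $b$; I would then show that if this happens for \emph{every} $j$ we are in case ii) after shifting perspective (replacing each $U_i$ by $U_ia$ absorbs the $b$'s appropriately), while if it fails for some $j$ we must already be in case i). A clean way to package this: show the set $\{|U_i|:i\ge1\}$, being unbounded, forces a single long prefix $s_{[N]}$ whose letter at each cut position is constant in "type", and conclude by the type-$a$ structure. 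That uniformity argument is where the real work lies; everything else is direct application of the balance property and Lemmas~\ref{lemma:lastletter}, \ref{lemma:cp}, \ref{theorem:primo}.
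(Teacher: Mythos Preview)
Your proposal has a genuine gap in the ``mixed'' case where some $U_i$ ends in $b$ and some $U_j$ ends in $a$. For blocks ending in $b$ you are fine (indeed, much more simply than you argue: if $U_i$ ends in $b$ then $U_ib$ contains $bb\notin\Ff s$, so $U_i$ is not right special and $U_ia\in\Pre s$ immediately). But for a block $U_j$ ending in $a$ you must show $U_jb\notin\Pre s$, and nothing in your outline accomplishes this. Your claim that ``a factor ending in $b$ in a type-$a$ word \ldots is rich in $b$'' is false in general (e.g.\ $aab$ is a prefix of the Sturmian word ${\cal S}^2(f)$, $f$ Fibonacci, yet $r_s(aab)=a$ since $bab\in\Ff s$), and in any case it would not help with blocks ending in $a$. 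Your final paragraph openly leaves this ``uniformity argument'' unresolved, and the suggested fixes (``replacing each $U_i$ by $U_ia$'', or reading off the next letter inside a long $U_N$) do not lead anywhere: the letter at position $|U_j|+1$ of $s$ can perfectly well be $b$ when $U_j$ ends in $a$, absent further information.

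The missing idea is that the \emph{equal-richness} hypothesis must be used to pin down the common value of $r_s(U_i)$, and this value then rules out the bad extension for \emph{every} $j$. The paper does exactly this. If the least index $i$ with $U_i$ ending in $b$ satisfies $i\ge2$, then $U_{i-1}$ ends in $a$, so $aU_i\in\Ff s$; comparing $U_i$ with the factor obtained by dropping the final $b$ and prepending $a$ gives $r_s(U_i)=b$, hence $r_s(U_j)=b$ for all $j$. Now for any $j$, if $U_jb$ were a factor then (since $U_j$ begins with $a$) the shift $a^{-1}U_jb$ would witness $r_s(U_j)=a$, a contradiction; thus $U_jb\notin\Ff s$ and $U_ja\in\Pre s$. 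The remaining case $i=1$ is handled by a dual argument: either every $U_j$ ends in $b$ (done, as above), or the first $j\ge2$ with $U_j$ ending in $a$ has $U_{j-1}$ ending in $b$, forcing $r_s(U_j)=a$; this in turn forces all later $U_k$ to end in $a$, and then Lemma~\ref{theorem:primo} produces some long $U_k$ with $k>j$ and $aU_k\in\Ff s$, whence $aU_1\in\Ff s$---contradicting $r_s(U_1)=a$ together with $U_1$ ending in $b$. You should reorganize your argument around determining the common richness value from the local configuration of last letters; the type-$a$ property alone is not enough.
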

\begin{proof} The result is clear if $s$ begins with $b$ as in this case each $U_i$ must terminate with $a.$ So suppose $s$ begins with  $a$ and some $U_i$ terminates with  $b.$ Let ${\cal I}$ denote the set of $j\geq 1$ such that $U_j$ terminates with $b,$ and let $i$ be the least element of ${\cal I}.$ In case $i\geq 2$ we have $aU_i \in \Ff^+s$, and hence $r_s(U_i)=b.$ Thus for all $j\geq 1,$ we have $U_jb \notin \Ff^+s$ and hence for all $j\geq 1$, $U_ja \in \Pre s.$  So suppose $i=1.$ If there exists $j\geq 2$ with $j\notin {\cal I}$ and $j-1\in {\cal I},$ then $r_s(U_j)=a.$ This implies that $aU_1 \notin \Ff^+s$ and that $U_k$ terminates with $a$ whenever $k\geq j.$  But by Lemma  \ref{theorem:primo}, $U_1$ is a prefix of some $U_k$ with $k>j,$ and hence $aU_1 \in \Ff^+s,$ a contradiction.
Thus in case $i=1,$ every $j\geq 1,$ belongs to ${\cal I},$ from which it follows that for every $j\geq 1,$  $U_jb \notin \Ff^+s$ and hence for all $j\geq 1,$ $U_ja \in \Pre s.$ \end{proof}

\begin{lemma}\label{prop:basic} Let $s$ be a Sturmian word  having a prefixal factorization
$$ s = U_1\cdots U_n \cdots,$$
 such that $r_s(U_i)=r_s(U_j)$ for all $i,j\geq 1$. Then there exists a Sturmian word $t$ having a prefixal factorization
$$t = V_1\cdots V_n \cdots,$$
such that  $r_t(V_i)=r_t(V_j)$ for all $i,j\geq 1$, and
$|V_1|<|U_1|$.
\end{lemma}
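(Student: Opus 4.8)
The strategy is to reduce the ``size'' of the leading block of a prefixal factorization of a Sturmian word into equally rich prefixes, producing a genuinely smaller such factorization of another Sturmian word; iterating this will contradict the well-ordering of the natural numbers, which is exactly how Theorem~\ref{theorem:basic1} will later invoke Lemma~\ref{prop:basic}. I will split the argument according to the type of $s$ and the first letter of $s$. If $s$ begins with $b$, then every $U_i$ terminates with $a$ (since $b$ is not followed by $bb$ in the relevant position, or more simply because each $U_i$ is a prefix beginning with $b$ and the word is balanced so it must end with $a$ when long enough; the short cases are handled directly), so hypothesis~2) of Lemma~\ref{lemma:onetwo} applies and the word $t$ with $R(t)=s$ is Sturmian. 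If $s$ begins with $a$, then by Lemma~\ref{prop:twocases} either all $U_i$ terminate with $a$ — again case~2) of Lemma~\ref{lemma:onetwo} — or all $U_ia$ are prefixes of $s$, which I will handle separately.

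\textbf{The case where $t=R^{-1}(s)$ is Sturmian.} Here $R(t)=s$ with $t$ Sturmian. The plan is to push the factorization $s=U_1U_2\cdots$ through $R^{-1}$. Since each $U_i$ is a prefix of $s$ ending in $a$ (in the subcase under consideration), and $R(a)=a$, $R(b)=ba$, one checks that $U_i = R(W_i)$ for a unique $W_i\in\{a,b\}^+$, and that $W_i$ is a prefix of $t$; moreover $W_1W_2\cdots = t$. Thus $t=W_1W_2\cdots$ is a prefixal factorization of $t$. To see the blocks stay equally rich, I use that $R$ changes the richness letter in a controlled way: a factor $w$ of $t$ is rich in $x$ iff $R(w)$ is rich in a corresponding letter of $s$ (one direction of this is essentially Lemma~\ref{lemma:lastletter} combined with the balance argument in Lemma~\ref{lemma:onetwo}); since all $U_i$ have the same $r_s$, all $W_i$ have the same $r_t$. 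Finally $|W_1| < |R(W_1)| = |U_1|$ because $R$ strictly increases length on any word containing a $b$, and $W_1$ must contain a $b$ (otherwise $U_1\in a^+$, contradicting Lemma~\ref{lemma:cp}, as $a$ is the first letter here — and in the $b$-first case $U_1$ visibly contains a $b$). So $t$ and $V_i:=W_i$ do the job.

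\textbf{The remaining case: $s$ begins with $a$ and every $U_ia$ is a prefix of $s$.} Here $s$ is of type $a$ (if it were of type $b$, Lemma~\ref{prop:twocases}'s dichotomy still applies, but type $b$ forces the ``terminate with $a$'' branch for long blocks, reducing to the previous case; I will confirm type $a$ is the only situation not already covered). The idea is that $U_i a \in \Pre s$ for all $i$ lets me re-bracket: write $U_i a = a\,U_i'$ is not quite it — rather, I consider the factorization of $s$ obtained by shifting the cut points, namely the complete-return-type decomposition relative to the prefix, or alternatively apply Lemma~\ref{lemma:onetwo} after first transforming $s$ by a conjugate morphism $R_b$ (with $R_b(b)=b$, $R_b(a)=ab$) adapted to type $a$. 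The cleanest route: since each $U_ia$ is a prefix, each $U_i \in a\{a,b\}^*$ and $U_i a \in \Pre s$, so $s$ has the alternative prefixal factorization with blocks $U_i' := a^{-1}(U_i a)= a^{-1}U_i a$? This requires $s$ to begin with $a U_i a$, i.e. $aU_i \in \Ff s$; but $r_s(U_i)$ is the common value, and if that value is $a$ then $aU_i\notin\Ff s$ giving a contradiction as in Lemma~\ref{prop:twocases}, while if it is $b$ then $aU_i\in\Ff s$ and the shift works, yielding blocks ending in $a$ — back to the first case on a conjugate word. I will organize this so that every branch terminates in the already-treated ``ends-with-$a$, apply $R^{-1}$'' situation, possibly after one application of Proposition~\ref{orbit}-type orbit manipulation or a conjugacy.

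\textbf{Main obstacle.} The technical heart is verifying that richness is preserved under $R^{-1}$: that $r_s(R(w))$ determines and is determined by $r_t(w)$, uniformly across the blocks. This needs the quantitative frequency estimates (as in the proof of Lemma~\ref{theorem:primo}) together with the explicit effect of $R$ on lengths and letter counts, $|R(w)| = |w| + |w|_b$ and $|R(w)|_a = |w|_a + |w|_b = |w|$, $|R(w)|_b = |w|_b$; the balance bookkeeping showing ``$w$ rich in $x$ $\iff$ $R(w)$ rich in the appropriate letter'' is where the real care is required, and it is essentially the same balance argument already carried out in Lemma~\ref{lemma:onetwo}, here run in both directions. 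The second obstacle is a careful case bookkeeping to ensure the ``$U_ia\in\Pre s$'' alternative genuinely reduces to the main case rather than spawning a new one; I expect a short conjugation argument to close it.
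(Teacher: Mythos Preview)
Your treatment of the ``all $U_i$ end in $a$'' branch via $R^{-1}$ is essentially the paper's Case~2, and you are right that the heart of the matter is the richness bookkeeping under $R$: the paper carries this out explicitly from $|R(w)|_a=|w|$ and $|R(w)|_b=|w|_b$, obtaining $r_t(V_i)=r_s(U_i)$ for every $i$ (not merely that the $r_t(V_i)$ agree among themselves).

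The genuine gap is your handling of the other branch, where $s$ begins with $a$ and every $U_ia$ is a prefix of $s$. Your proposed morphism $R_b$ (with $R_b(b)=b$, $R_b(a)=ab$) has image contained in $\{b,ab\}^*$, which never contains the factor $aa$, so a type-$a$ Sturmian word cannot be written as $R_b(t)$; this route is dead. Your conjugation idea is also miswritten: the blocks $a^{-1}U_ia$ are prefixes of $s'=a^{-1}s$, not of $s$, so you do not get an ``alternative prefixal factorization of $s$'', and the claim that $r_s(U_i)=a$ forces $aU_i\notin\Ff s$ (hence a contradiction) is unjustified. A corrected conjugation argument does go through---pass to $s'$, note that $s'=V_1'V_2'\cdots$ with $V_i'=a^{-1}U_ia$ is a prefixal factorization of the Sturmian word $s'$ into equally-rich blocks ending in $a$, then apply your first case to $s'$---but this is not what you wrote. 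The paper instead handles this branch directly and symmetrically using the morphism $L$ given by $L(a)=a$, $L(b)=ab$: since $s$ is of type $a$ and begins with $a$, it factors uniquely over $\{a,ab\}$, so $s=L(t)$; here $t$ is Sturmian automatically because $L$ is a standard Sturmian morphism (no analogue of Lemma~\ref{lemma:onetwo} is needed), the hypothesis $U_ia\in\Pre s$ is precisely what makes each $U_i$ lie in $\{a,ab\}^*$, and the same length--count computation yields $r_t(V_i)=r_s(U_i)$ and $|V_1|<|U_1|$.
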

\begin{proof} We can suppose without loss of generality that $s$ is a Sturmian word of type $a$.
From Lemma~\ref{prop:twocases} either all  $U_i$, $i\geq 1$, terminate with  the letter $a$
or for all $i\geq 1$, $U_ia \in \Pre s$. We  consider two cases:

\vspace{2 mm}

\noindent
Case 1.  For all $i\geq 1$, $U_ia \in \Pre s$.

We can suppose that $s$ begins with the letter $a$. Indeed, otherwise, if the first letter of $s$ is $b$, then all $U_i$, $i\geq 1$, begin with the letter $b$ and, as $s$ is of type $a$, they must  terminate with the letter $a$. Thus the case that the first letter of $s$ is $b$ will be considered when we will analyze  Case 2.

We consider the injective endomorphism $L_a$ of $\{a, b\}^*$,  simply denoted $L$,  defined by
$$L(a)=a \ \mbox{and} \ \  L(b)=ab. $$ Since $s$ is of type $a$, the first letter of $s$ is $a$,  and $X=\{a, ab\}$ is a code having a finite deciphering delay \cite{BP}, the word $s$ can be uniquely factored as a product of  elements of $X$. Thus there exists a unique word $t \in \{a,b\}^{\omega}$ such that  $s = L(t)$. The following holds:
\begin{itemize}
\item[1.] The word $t$ is a Sturmian word.
\item[2.] For any $i\geq 1$ there exists a non-empty prefix $V_i$ of $t$ such that $L(V_i)= U_i$.
\item[3.] The word $t$ can be factorized as $ t= V_1\cdots V_n \cdots.$
\item[4.] $|V_1|<|U_1|$.
\item[5.] For all $i,j \geq 1$ one has  $r_t(V_i)= r_t(V_j)$.
\end{itemize}
Point 1. This is a consequence of the fact that $L$ is a standard Sturmian morphism \cite[Corollary 2.3.3]{LO2}.

\vspace{2 mm}

\noindent
Point 2.  For any $i\geq 1$, since $U_ia\in \Pre s$  and any pair $(c, a)$ with $c\in \{a, b\}$ is synchronizing for $X^{\infty}=X^*\cup X^{\omega}$  \cite{BP}, one has   $U_i\in X^*$, so that there exists $V_i\in \Pre t$ such that
$L(V_i)= U_i$.

\vspace{2 mm}

\noindent
Point 3. One has $L(V_1\cdots V_n\cdots) = U_1\cdots U_n \cdots= s = L(t)$. Thus
$t= V_1\cdots V_n \cdots$.

\vspace{2 mm}

\noindent
Point 4. By Lemma \ref{lemma:cp}, $U_1$ is not a power of $a$,  so that in $U_1$ there must be at least one occurrence of the letter $b$. This implies that $|V_1|<|U_1|$.

\vspace{2 mm}

\noindent
Point 5. We shall prove that  $r_t(V_i)= r_s(U_i)$ for all $i\geq 1$. From this  equality, one has that  $r_t(V_i)= r_t(V_j)$ for all $i,j \geq 1$.

Since $t$ is a Sturmian word, there exists $V'_i\in \Ff t$ such that
$$ |V_i|= |V'_i| \ \mbox{and} \ \mbox{either} \ |V_i|_a> |V'_i|_a \ \mbox{or} \  |V_i|_a< |V'_i|_a .$$
In the first case $r_t(V_i)= a$ and in the second case $r_t(V_i)= b$. Let us set
$$ F_i = L(V'_i).$$
Since $U_i = L(V_i)$, from the definition of the morphism $L$ one has
\begin{equation}\label{eq:uno}
|F_i|_a = |V'_i|_a+|V'_i|_b = |V'_i|, \ |F_i|_b= |V'_i|_b.
\end{equation}
\begin{equation}\label{eq:due}
|U_i|_a = |V_i|_a+|V_i|_b = |V_i|, \ |U_i|_b= |V_i|_b.
\end{equation}

Let us first consider the case $r_t(V_i)= a$, i.e., $|V_i|_a= |V'_i|_a+1$ and $|V_i|_b= |V'_i|_b-1$.
From the preceding equations one has
$$ |F_i|= |U_i|+1.$$
Moreover, from the definition of $L$ one has that  $F_i$ begins with the letter $a$. Hence,
$|a^{-1}F_i|= |U_i|$ and $|a^{-1}F_i|_a= |F_i|_a-1= |U_i|_a-1$. Thus $|U_i|_a >|a^{-1}F_i|_a$.
Since $a^{-1}F_i \in \Ff s$, one has
$$r_s(U_i)= r_t(V_i)= a .$$
Let us now consider the case $r_t(V_i)= b$, i.e., $|V_i|_a= |V'_i|_a-1$ and $|V_i|_b= |V'_i|_b+1$.
From (\ref{eq:uno}) and (\ref{eq:due}) one derives
$$|U_i|= |F_i|+1,$$
and $|U_i|_b>|F_i|_b$. Now $F_ia$ is a factor of $s$. Indeed, $F_i= L(V'_i)$ and for any $c\in \{a, b\}$
such that $V'_ic \in \Ff t$ one has $L(V'_ic)= F_iL(c)$. Since for any letter $c$, $L(c)$ begins with the letter $a$ it follows that $F_ia \in \Ff s$. Since $|F_ia| = |U_i|$ and $|U_i|_b >|F_i|_b = |F_ia|_b$, one has that $U_i$ is rich in $b$. Hence, $r_s(U_i)= r_t(V_i)=b$.

\vspace{2 mm}

\noindent
Case 2. All  $U_i$, $i\geq 1$, terminate with the letter $a$.

We consider the injective endomorphism  $R_a$ of $\{a, b\}^*$, simply denoted $R$,  defined in (\ref{morf:erre}).
 Since  $s$ is of type $a$ and $X=\{a, ba\}$ is a  prefix code, the word $s$ can be uniquely factored as a product of  elements of $X$. Thus there exists a unique word $t \in \{a,b\}^{\omega}$ such that  $s = R(t)$. The following holds:

\begin{itemize}
\item[1.] The word $t$ is a Sturmian word.
\item[2.] For any $i\geq 1$ there exists a non-empty prefix $V_i$ of $t$ such that $R(V_i)= U_i$.
\item[3.] The word $t$ can be factored as $ t= V_1\cdots V_n \cdots.$
\item[4.] $|V_1|<|U_1|$.
\item[5.] For all $i,j \geq 1$ one has $r_t(V_i)= r_t(V_j)$.
\end{itemize}

\noindent
Point 1. From Lemma \ref{lemma:onetwo}, since $R(t)=s$ it follows that $t$ is Sturmian.

\vspace{2 mm}

\noindent
Point 2.  For any $i\geq 1$, since $U_i$ terminates with the letter $a$  and any pair $(a, c)$ with $c\in \{a, b\}$ is synchronizing for $X^{\infty}$, one has  that $U_i\in X^*$, so that there exists $V_i\in \Pre t$ such that
$R(V_i)= U_i$.

\vspace{2 mm}

\noindent
Point 3. One has $R(V_1\cdots V_n\cdots) = U_1\cdots U_n \cdots= s = R(t)$. Thus
$t= V_1\cdots V_n \cdots$.

\vspace{2 mm}

\noindent
Point 4. By Lemma~\ref{lemma:cp}, $U_1$ is not a power of the first letter  $c$ of $s$, so that in $U_1$ there must be at least one occurrence of the letter $\bar c$. This implies that $|V_1|<|U_1|$.

\vspace{2 mm}

\noindent
Point 5. We shall prove that for all $i\geq 1$, $r_t(V_i)= r_s(U_i)$. From this  one has that $r_t(V_i)= r_t(V_j)$ for all $i,j \geq 1$.

Since $t$ is a Sturmian word, there exists $V'_i\in \Ff t$ such that
$$ |V_i|= |V'_i| \ \mbox{and} \ \mbox{either} \ |V_i|_a> |V'_i|_a \ \mbox{or} \  |V_i|_a< |V'_i|_a .$$
In the first case $r_t(V_i)= a$ and in the second case $r_t(V_i)= b$. Let us set
$$ F_i = R(V'_i).$$
Since $U_i = R(V_i)$, from the definition of the morphism $R$ one has that equations (\ref{eq:uno}) and
(\ref{eq:due}) are satisfied.

Let us first consider the case $r_t(V_i)= a$, i.e., $|V_i|_a= |V'_i|_a+1$ and $|V_i|_b= |V'_i|_b-1$.
From the preceding equations one has
$$ |F_i|= |U_i|+1.$$
From the definition of the morphism $R$ one has that $F_i= R(V'_i)$ terminates with the letter $a$.
Hence, $|F_ia^{-1}|= |U_i|$ and $|F_ia^{-1}|_a=  |F_i|_a-1= |U_i|_a-1$. Thus $|U_i|_a=|F_ia^{-1}|_a+1$, so that $U_i$ is rich in $a$ and $r_s(U_i)= r_t(V_i)=a$.

Let us now suppose that  $r_t(V_i)= b$, i.e., $|V_i|_a= |V'_i|_a-1$ and $|V_i|_b= |V'_i|_b+1$.
From (\ref{eq:uno}) and (\ref{eq:due}) one derives
$$|U_i|= |F_i|+1,$$
and $|U_i|_b>|F_i|_b$.
We prove that $aF_i \in \Ff s$. Indeed,  $F_i= R(V'_i)$ and for any $c\in \{a, b\}$
such that $cV'_i \in \Ff t$ one has $R(c)R(V'_i)= R(c)F_i$. Note that such a letter $c$  always  exists, as $t$ is recurrent. Since $R(c)$ terminates with the letter $a$ for every  letter $c$, it follows that $aF_i \in \Ff s$. Since $|aF_i| = |U_i|$ and $|U_i|_b >|aF_i|_b = |F_i|_b$, one has that $U_i$ is rich in $b$. Hence, $r_s(U_i)= r_t(V_i)=b$.
\end{proof}

\vspace{3 mm}

\noindent
({\em Proof of Theorem \ref{theorem:basic1}}) Let $s$ be a Sturmian word and suppose that $s$ admits a prefixal factorization
$s = U_1U_2 \cdots,$
where   $r_s(U_i)=r_s(U_j)$ for all $i,j \geq 1$.
Among all Sturmian words having this property we can always consider a Sturmian word $s$ such that
$|U_1|$ is minimal.  By Lemma~\ref{prop:basic},  there exists a Sturmian word $t$ such that $t$ admits a prefixal factorization
$t = V_1\cdots V_n \cdots,$
where  $r_t(V_i)=r_t(V_j)$ for all $i,j\geq 1$, and
$|V_1|<|U_1|$, which  contradicts the minimality of the length of $U_1$. $\Box$

\vspace{3 mm}

\noindent The following is an immediate consequence of Theorem~\ref{theorem:basic1}:

\begin{corollary}\label{cor:twenty-one} Let $s$ be a Sturmian word. Then $s\in {\cal P}_3$.
\end{corollary}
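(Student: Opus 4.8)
The plan is to use the map $r_s$ to manufacture an explicit $3$-coloring of $\Ff^+ s$. Given a Sturmian word $s$, I would color each non-empty factor $w$ by
\[
c(w)=\begin{cases}
r_s(w), & \text{if $w$ is a prefix of $s$};\\
0, & \text{if $w$ is not a prefix of $s$},
\end{cases}
\]
where $0$ is a fresh symbol not in $\{a,b\}$, so that $c:\Ff^+ s\to\{0,a,b\}$ uses exactly three colors. It then suffices to show no factorization of $s$ is monochromatic relative to $c$.

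The argument splits into the obvious cases according to the monochromatic color. If $s=U_1U_2\cdots$ is a factorization with $c(U_i)=0$ for all $i$, then $U_1$ is not a prefix of $s$, contradicting the fact that the first block of any factorization is always a prefix; so the color $0$ is impossible. The only remaining possibility is that every $U_i$ is a prefix of $s$ — i.e.\ $s=U_1U_2\cdots$ is a \emph{prefixal} factorization — and $r_s(U_i)=r_s(U_j)$ for all $i,j\ge 1$. But this is exactly the situation excluded by Theorem~\ref{theorem:basic1}, which guarantees the existence of $i,j$ with $r_s(U_i)\ne r_s(U_j)$. Hence no monochromatic factorization of $s$ exists, and $s\in{\cal P}_3$.

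Since all the real work has already been carried out in Theorem~\ref{theorem:basic1} (and in the supporting lemmas on richness and on the codings by $L$ and $R$), the proof of the corollary is essentially a two-line bookkeeping argument; there is no genuine obstacle left. The only point requiring a word of care is that the coloring is well-defined: every non-empty factor $w$ of a Sturmian word is rich in exactly one of the two letters (this follows from aperiodicity and the balance property, as noted just after the definition of richness), so $r_s(w)$ is unambiguously an element of $\{a,b\}$, and the three color classes — non-prefixes, $a$-rich prefixes, $b$-rich prefixes — are genuine and exhaust $\Ff^+ s$.
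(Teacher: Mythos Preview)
Your proof is correct and is essentially identical to the paper's own argument: the paper defines the same $3$-coloring $c(U)=r_s(U)$ for prefixes and $c(U)=0$ otherwise, and then invokes Theorem~\ref{theorem:basic1} in one line. Your version simply spells out the case split and the well-definedness of $r_s$ in more detail.
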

\begin{proof} Put $c(U)=r_s(U)$ if $U$ is a prefix of $s,$ and $c(U)=0$ otherwise.  It follows immediately from Theorem~\ref{theorem:basic1} that no factorization of $s$ is monochromatic.
\end{proof}

\noindent  Combining the preceding result with a theorem of  P. Hubert \cite{H},  we obtain

 \begin{corollary}Let $x$ be any aperiodic balanced word on any alphabet $A$. Then $x \in {\cal P}_3$.
 \end{corollary}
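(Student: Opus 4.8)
The plan is to reduce the general aperiodic balanced case to the Sturmian case via Hubert's structure theorem, and then invoke Corollary~\ref{cor:twenty-one} together with the invariance property Proposition~\ref{invariance}. Recall that P. Hubert's theorem~\cite{H} classifies aperiodic balanced words over an arbitrary finite alphabet: every such word $x$ is obtained from a Sturmian word $s\in\{a,b\}^\omega$ by a constant-length-preserving operation, namely there is a letter-to-letter coding together with a substitution of one of the two letters of $s$ by a periodic (or constant) sequence — more precisely, $x$ is the image of a Sturmian sequence under a morphism obtained by replacing one letter by a single letter and the other letter by finitely many letters read periodically. In all versions of the statement, the upshot for our purposes is the existence of a Sturmian word $s$ and a non-erasing morphism $h$ such that $x=h(s)$.

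First I would state precisely the form of Hubert's theorem that is being used: an infinite word $x$ over a finite alphabet $A$ is aperiodic and balanced if and only if $x$ is, up to relabelling of letters, of the form $h(s)$ where $s$ is a Sturmian word over $\{a,b\}$ and $h$ is a non-erasing morphism (indeed one where $h(a)$ is a single letter and $h(b)$ is a block whose letters cycle through a finite periodic pattern). Second, I would apply Corollary~\ref{cor:twenty-one} to conclude $s\in{\cal P}_3$. Third, I would apply Proposition~\ref{invariance} to the equation $x=h(s)$: since $s\in{\cal P}\supseteq{\cal P}_3$, it follows that $x\in{\cal P}$. Finally, to get the sharper conclusion $x\in{\cal P}_3$ rather than merely $x\in{\cal P}$, I would invoke Remark~\ref{rem:lcolo}: because $h$ is non-erasing, the construction in the proof of Proposition~\ref{invariance} does not require the extra color $e$, so $s\in{\cal P}_3$ yields $x\in{\cal P}_3$.

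The one genuine subtlety, and the step I expect to be the main obstacle, is matching the exact statement of Hubert's theorem to what is needed here. Hubert's classification is sometimes phrased in terms of $C$-balanced words or in terms of ``quasi-Sturmian'' constructions, and one must check that the morphism it produces is genuinely non-erasing (so that Remark~\ref{rem:lcolo} applies and we keep the bound of $3$ colors) and that aperiodicity of $x$ does correspond to aperiodicity of the underlying Sturmian word $s$ (a periodic $s$ could only arise if $x$ were ultimately periodic, which is excluded). Once the correct form of Hubert's theorem is pinned down, the rest is a direct two-line application of results already proved in this paper. I would therefore write the proof as: ``By P. Hubert's theorem~\cite{H}, there exist a Sturmian word $s$ and a non-erasing morphism $h$ with $x=h(s)$. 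By Corollary~\ref{cor:twenty-one}, $s\in{\cal P}_3$. By Proposition~\ref{invariance} together with Remark~\ref{rem:lcolo} (applicable since $h$ is non-erasing), $x\in{\cal P}_3$.''
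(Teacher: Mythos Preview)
Your approach has the morphism going in the wrong direction, and this is a genuine gap, not a cosmetic one.

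Hubert's theorem does \emph{not} give a morphism $h$ with $x=h(s)$ for a Sturmian $s$. Your own description of the construction (``the other letter by finitely many letters read periodically'') already shows why: replacing successive occurrences of a letter by terms of a periodic sequence is not the action of a morphism unless that period is $1$. For instance, take the Fibonacci word $s=abaab\cdots$, replace the subsequence of $a$'s by $0101\cdots$ and each $b$ by $2$; the resulting balanced word $x=02102\cdots$ is not $h(s)$ for any morphism $h$, since $h(a)$ would have to be sometimes $0$ and sometimes $1$.

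What Hubert's theorem does give is a partition $A=A'\cup A''$ and the letter-to-letter morphism $h:A^*\to\{a,b\}^*$ sending $A'$ to $a$ and $A''$ to $b$, such that $y:=h(x)$ is Sturmian. This is exactly the shape needed for Proposition~\ref{invariance}, which transfers membership in ${\cal P}$ from the \emph{image} to the \emph{preimage}: from $y=h(x)\in{\cal P}_3$ (Corollary~\ref{cor:twenty-one}) and $h$ non-erasing (Remark~\ref{rem:lcolo}) one gets $x\in{\cal P}_3$. Your attempted use of Proposition~\ref{invariance} in the opposite direction (from $s\in{\cal P}$ to $h(s)\in{\cal P}$) is false in general---any periodic image of a word in ${\cal P}$ is a counterexample.

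So the fix is simple but essential: reverse the roles. The Sturmian word is the morphic image of $x$, not the other way around, and then the invariance machinery applies directly.
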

 \begin{proof}
 By the result of Hubert \cite[Theorem 1]{H}, there exist a partition $A=A'\cup A'' $ and a morphism $h: A^* \rightarrow \{a,b\}^*$  defined by $h(t)=a$ if $t\in A' $ and $h(t)=b$  if $t\in A''$, such that the word  $h(x)=y$ is Sturmian.
By Corollary \ref{cor:twenty-one}, we have $y \in {\cal P}_3$ and hence by Proposition \ref{invariance} and Remark~\ref{rem:lcolo}, we obtain $x \in {\cal P}_3$.
\end{proof}

\noindent Another application of Corollary \ref{cor:twenty-one} yields:

\begin{corollary}\label{complexity}Let $x\in A^\omega$ be a recurrent word and $N$ a positive integer. Suppose the factor complexity $\lambda_x(n)=n+k$, $k>0$, whenever $n\geq N.$ Then  $x \in {\cal P}$.
\end{corollary}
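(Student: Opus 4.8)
The plan is to reduce the statement to the Sturmian case already handled by Corollary~\ref{cor:twenty-one}, using the structural theory of words of \emph{eventually affine} complexity together with the invariance results of Section~\ref{sec:ipp}. The key fact (going back to Coven--Hedlund and made precise by Cassaigne) is that a recurrent word $x$ with $\lambda_x(n+1)-\lambda_x(n)=1$ for all large $n$ is, up to a morphic image, ``Sturmian-like''. More precisely, since $x$ is recurrent and $\lambda_x(n)=n+k$ for $n\geq N$, the word $x$ has exactly one right special factor of each length $n\geq N$ (and each such right special factor is followed by exactly two letters, all other factors of that length being followed by exactly one). I would first extract this combinatorial skeleton: for $n\geq N$ let $w_n$ be the unique right special factor of length $n$; then $w_n$ is a suffix of $w_{n+1}$, and the family $(w_n)_{n\geq N}$ converges to a one-sided infinite word $\rho$ read from the right, equivalently there is a well-defined bi-infinite or one-sided ``special'' direction.

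Next I would use a derivation argument to pass to a genuinely Sturmian word. Fix a uniformly recurrent prefix $u$ of $x$ of length $\geq N$ (such a prefix exists if $x$ is uniformly recurrent; the non-uniformly recurrent case is already covered by Theorem~\ref{thm:notunire}, so we may assume $x$ uniformly recurrent). By Corollary~\ref{derived}, $x\in\mathcal P$ if and only if $\mathcal D_u(x)\in\mathcal P$. The point of choosing $|u|$ large (past the threshold where the complexity becomes affine) is that the derived word $\mathcal D_u(x)$ has strictly smaller complexity gap: coding by return words collapses the ``extra'' $k-1$ worth of complexity, and one shows $\lambda_{\mathcal D_u(x)}(n)=n+k'$ eventually with $k'<k$, or directly that after finitely many such derivations the complexity becomes exactly $n+1$, i.e. the derived word is Sturmian (this is the substance of the Cassaigne-type analysis of quasi-Sturmian words: every aperiodic word of eventually affine complexity is obtained from a Sturmian word by applying a finite sequence of elementary morphisms, equivalently is a morphic image $x=h(s)$ with $s$ Sturmian). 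Iterating Corollary~\ref{derived} (or invoking Proposition~\ref{invariance} directly on the representation $x=h(s)$) then gives $x\in\mathcal P$ since $s\in\mathcal P_3$ by Corollary~\ref{cor:twenty-one}.

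Alternatively, and perhaps more cleanly, I would cite the known structure theorem for \emph{quasi-Sturmian} words: an aperiodic word $x$ satisfies $\lambda_x(n)=n+k$ for all $n\geq N$ if and only if there exist a Sturmian word $s$, a morphism $h$, and a finite word $v$ such that $x=vh(s)$. If $x$ is required to be recurrent, one can take $v=\varepsilon$ (or absorb $v$ using Proposition~\ref{orbit}, which lets us prepend or remove finitely many letters at will). Granting this, the proof is immediate: $s\in\mathcal P_3\subseteq\mathcal P$ by Corollary~\ref{cor:twenty-one}, so $h(s)\in\mathcal P$ implies $x=h(s)$ is in $\mathcal P$ by Proposition~\ref{invariance}, and handling the prefix $v$ via finitely many applications of Proposition~\ref{orbit} completes the argument. (Note that Theorem~\ref{thm:notunire} already disposes of any word that fails to be uniformly recurrent, and purely periodic words are excluded since their complexity is bounded, so the aperiodic hypothesis implicit in ``quasi-Sturmian'' is not a real restriction here.)

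The main obstacle is making the reduction to the Sturmian case rigorous without simply quoting the quasi-Sturmian structure theorem as a black box — that is, controlling how the complexity gap $k$ decreases under derivation by a long uniformly recurrent prefix, and verifying that the relevant derived word is honestly Sturmian (complexity $n+1$) rather than merely of smaller affine complexity, so that Corollary~\ref{cor:twenty-one} applies. If one is content to invoke the known characterization of words with ultimately affine complexity as morphic images of Sturmian words, the remaining work is routine bookkeeping with Proposition~\ref{invariance}, Remark~\ref{rem:lcolo}, and Proposition~\ref{orbit}; the number of colors obtained this way is $4$ in general (three from the Sturmian word, one extra from the possibly-erasing morphism), or $3$ if the morphism can be taken non-erasing.
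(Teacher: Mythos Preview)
Your invocation of Proposition~\ref{invariance} is in the wrong direction. That proposition says that if $t=h(s)$ and $t\in\mathcal P$ then $s\in\mathcal P$: the \emph{preimage} inherits membership in $\mathcal P$, not the image. So from the Cassaigne representation $x=vh(s)$ with $s$ Sturmian, knowing $s\in\mathcal P_3$ tells you nothing about $h(s)$ via Proposition~\ref{invariance}; the ``cleaner'' alternative route you sketch therefore does not go through. (This is not a minor slip: the image of a word in $\mathcal P$ under a morphism can certainly be periodic, hence outside $\mathcal P$.) The same directional error appears in your parenthetical ``or invoking Proposition~\ref{invariance} directly on the representation $x=h(s)$'' in the preceding paragraph.

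Your derivation approach is correct in direction, since Corollary~\ref{cor: cprw} is an equivalence, but you leave the crucial step --- why some derived word is actually Sturmian --- as a vague appeal to ``Cassaigne-type analysis'' or an unspecified induction on $k$. The paper does this in a single derivation, with no iteration and no external structure theorem: one shows by a short combinatorial argument (comparing longest common prefixes of complete returns against the unique right special factor of each length $\geq N$) that every factor $v$ of $x$ with $|v|\geq N$ has exactly two return words. Taking a prefix $u$ with $|u|\geq N$, it follows that every factor of $\mathcal D_u(x)$ has exactly two return words, and Vuillon's characterization then gives that $\mathcal D_u(x)$ is Sturmian outright. Corollary~\ref{cor:twenty-one} and Corollary~\ref{cor: cprw} finish the proof. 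Note also that this return-word count shows every long factor of $x$ is uniformly recurrent, so the uniform-recurrence hypothesis you worried about securing is automatic and Theorem~\ref{thm:notunire} is not needed.
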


\begin{proof} We have $\lambda_x(n+1)-\lambda_x(n)=1$ for $n\geq N$, whence $x$ has exactly one right special factor of each length $n\geq N$ and this right special factor has exactly two right extensions to a factor of length $n+1.$ Since $x$ is aperiodic,  any non-empty factor $v$ of $x$  has more than one  return word, i.e.,   $\card(\mathcal{R}_v(x))>1$.  We claim that  $\card(\mathcal{R}_v(x))=2$ for each factor $v$ of $x$ with $|v|\geq N.$ It suffices to prove the claim for each right special factor $v$ of $x$ with $|v|\geq N.$ Suppose to the contrary that some right special factor $v$ of $x$ with $|v|\geq N$ has three complete  return words  $z_1,z_2,z_3.$ For each $1\leq i< j\leq 3$
let $w_{i,j}$ denote the longest common prefix of $z_i$ and $z_j.$ Then $|w_{i,j}|\geq |v|$ for each $1\leq i< j\leq 3$ and for some choice of $i$ and $j$ we have $|w_{i,j}|>|v|.$ Since each $w_{i,j}$ is a right special factor of $x$,  it follows that $v$ is both a proper prefix and a proper suffix of some $w_{i,j},$ contradicting that each $z_i$ contains exactly two occurrences of $v.$
Now fix a prefix $u$ of $x$ with $|u|\geq N.$ It follows from the previous claim that
$\card(\mathcal{R}_v(\mathcal{D}_u(x)))=2$ for every factor $v$ of $\mathcal{D}_u(x).$ Via a result of  L. Vuillon  \cite{Vu}  we conclude that $\mathcal{D}_u(x)$ is a Sturmian word. The result now follows from Corollary~\ref{cor:twenty-one} and Corollary~\ref{cor: cprw}.
\end{proof}

An infinite word $x$ is called {\em episturmian} \cite{DJP}  if there exists a standard episturmian word (see Remark~\ref{rem:eli}) $y$
such that $\Ff  x = \Ff  y$. We have seen in Section \ref{bpcp} (cf. Remark~\ref{rem:eli})  that the non-periodic standard epi\-stur\-mian words are in ${\cal P}$. A problem that naturally arises is whether, as in the case of  Sturmian words,  all non-periodic episturmian words are in  ${\cal P}$. We end with a partial result in this direction:

\begin{proposition}Let $x\in A^\omega$ be a suffix of a non-periodic standard episturmian word $y.$ Then  $x \in {\cal P}$.
\end{proposition}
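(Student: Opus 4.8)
Let $y$ be a non-periodic standard episturmian word and let $x$ be a suffix of $y$, say $y = v x$ with $v \in A^*$. By Proposition~\ref{orbit}, applied $|v|$ times, it suffices to show that $y = v x \in {\cal P}$ implies $x \in {\cal P}$; but $y$ itself is in ${\cal P}$ by Remark~\ref{rem:eli}. So the real content is to push the property ``being in ${\cal P}$'' from $y$ down to an arbitrary suffix. Since Proposition~\ref{orbit} gives only that $ax \in {\cal P} \Leftrightarrow x \in {\cal P}$ (adding one letter on the left), I would run the argument backwards: it is enough to prove that if $az \in {\cal P}$ for some letter $a$, then $z \in {\cal P}$, and then strip the letters of $v$ one at a time off the front of $y = v x$ until only $x$ remains. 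This direction of Proposition~\ref{orbit} is exactly the statement ``$x \in {\cal P}$ if $ax \in {\cal P}$ for each $a\in A$''; but here I only know it for the one specific letter $a$ that actually precedes the suffix in $y$. So the first thing I would check is whether the proof of Proposition~\ref{orbit} in fact only needs a single prefixing letter in that direction — and indeed it does not: the ``only suppose $ax\in{\cal P}$'' half of that proof really uses all letters, because a factorization of $x$ could be rebuilt from a factorization of $bx$ for any $b$.

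So instead I would work directly. The key structural fact about a non-periodic standard episturmian word $y$ is that, by Corollary~\ref{cor:N1} and the episturmian structure (Remark~\ref{rem:eli}), for \emph{every} letter $a \in A$ the word $ay$ fails to admit a prefixal factorization — in fact $y$ begins with infinitely many palindromic prefixes followed by at least two different letters, so $ay$ has arbitrarily long unbordered prefixes. Now take the suffix $x$ with $y = vx$. The word $x$ lies in the shift orbit of $y$, hence $\Ff x \subseteq \Ff y$. I would argue that $ax$ cannot admit a prefixal factorization either, for any letter $a \in A$: if $ax = U_1 U_2 \cdots$ with each $U_i$ a nonempty prefix of $ax$, then since $|U_i| \le |U_{i+1}|$ for some $i$, the word $ax$ (hence $x$, hence $y$) would contain a square $U_i U_i$ where $U_i$ begins with $a$; more carefully, I would use Lemma~\ref{lemma:B} — if $ax$ has a prefixal factorization then $a$ is uniformly recurrent in $ax$, and hence in $x$, and hence in $y$. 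The crucial point is that an episturmian word, being uniformly recurrent itself, has every letter uniformly recurrent, so this alone is not a contradiction; I need the sharper obstruction from Lemma~\ref{lemma:preffac}: a prefixal factorization of $ax$ forces $ax$ to begin with only finitely many unbordered prefixes. I would therefore show that $ax$ begins with arbitrarily long unbordered prefixes, by using the abundance of palindromic prefixes of $y$ together with the fact that $x$ is a suffix of $y$.

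The mechanism I have in mind: because $y$ is standard episturmian and non-periodic, by the cited results (\cite{DJP,JP}, via Corollary~\ref{cor:N1}) there is a subset $A' \subseteq A$ with $\card(A') \ge 2$ such that $y$ begins with infinitely many prefixes $Ub$, $b \in A'$, with $U$ a palindrome. Each such $Ub$ occurs in $y$, hence in $x$ possibly not as a prefix but as a factor — that is the gap I must bridge. What I would actually use is that $x$, being in the shift orbit closure of a uniformly recurrent word, has $\Ff x = \Ff y$, and that the \emph{standard} episturmian word $y$ is characterized by its left-special factors being its prefixes; after shifting, $x$ need not retain this, so I would instead consider the lexicographically least element $y'$ of the shift orbit closure of $y$ as in the Remark after Corollary following Lemma~\ref{lemma:preffac}, observe $y'$ is Lyndon hence has infinitely many unbordered prefixes hence $y' \in {\cal P}_2$, and then relate $x$ to $y'$ via Proposition~\ref{orbit} — both $x$ and $y'$ lie in $\orb$ of the common two-sided object, but they need not be suffixes of one another, so Proposition~\ref{orbit} does not immediately apply.

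\textbf{The main obstacle} is precisely this last step: Proposition~\ref{orbit} only transfers membership in ${\cal P}$ between words that are suffixes of one another (differing by a finite prefix), and the invariance results of Section~\ref{sec:ipp} transfer along morphisms, but an arbitrary suffix $x$ of $y$ is related to $y$ by removing a \emph{finite} prefix $v$ — which is exactly the situation Proposition~\ref{orbit} handles, iterated $|v|$ times. So the clean route is: \emph{prove the missing direction of Proposition~\ref{orbit} for a single known prefixing letter}. That is, show directly that if $a\in A$ and $ax \in {\cal P}$ via a coloring $c$, then $x \in {\cal P}$. Given a monochromatic factorization $x = U_1 U_2 \cdots$ one wants a monochromatic factorization of $ax$; the trouble is one cannot simply prepend $a$ to $U_1$ and keep the others, since $aU_1, U_2, U_3,\dots$ need not all be prefixes of $ax$. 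But one \emph{can} group: since $a$ is recurrent in $ax$ (as $ax$ is a suffix of the uniformly recurrent $y$), infinitely many prefixes of $x$ begin at positions just after an occurrence of... — here I would instead invoke the already-proved Theorem~\ref{thm:notunire} together with the dichotomy: either $x$ is not uniformly recurrent, in which case $x \in {\cal P}$ immediately, or $x$ is uniformly recurrent, in which case $x$ is uniformly recurrent with the same factor set as $y$, and then by Corollary~\ref{derived} and the fact that derived words of $x$ coincide (up to the episturmian structure being preserved under taking derived words, \cite{JV}-type results) with those of $y$, I would reduce $x \in {\cal P}$ to $y \in {\cal P}$, which holds. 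Concretely: fix a prefix $u$ of $x$ long enough that $u$ is also a prefix of $y$ up to recoding; since $x$ and $y$ share all factors and $y$ is standard episturmian, $\mathcal D_u(x)$ is again (a shift-orbit member of) a standard episturmian word or at worst a word to which Corollary~\ref{cor:N1} still applies, hence $\mathcal D_u(x) \in {\cal P}$, and then Corollary~\ref{cor: cprw} gives $x \in {\cal P}$. I expect the delicate verification to be that $\mathcal D_u(x)$ inherits enough palindromic-prefix structure to invoke Corollary~\ref{cor:N1} — this is where one must be careful, since taking a derived word of a shifted episturmian word can a priori destroy the ``closed under reversal / left-special prefixes'' properties, and handling it may require choosing $u$ to be a palindromic prefix of $y$ so that returns to $u$ are themselves well-behaved.
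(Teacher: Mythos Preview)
Your proposal is an honest exploration, but it is not a proof: every route you try either you yourself close off, or you leave open with a ``delicate verification'' that you do not carry out. In particular, the derived-word approach at the end does not close the loop. Taking $\mathcal D_u(x)$ for a long prefix $u$ of $x$ gives, at best, an episturmian word --- but not a \emph{standard} one, and not obviously a word satisfying the hypotheses of Corollary~\ref{cor:N1}. So you are back to needing the result for non-standard episturmian words, which is precisely what you are trying to prove. The earlier attempts (iterating Proposition~\ref{orbit}, showing $ax$ has no prefixal factorization, passing to a Lyndon element of the orbit closure) you correctly diagnose as blocked.

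The idea you are missing, and which the paper supplies, is a direct structural link between prefixes of $x$ and prefixes of $y$. Write $y=ux$. The paper first proves the following claim: there is an $N>|u|$ such that every prefix $v$ of $x$ with $|v|\ge N$ occurs in $x$ only immediately preceded by $u$. (Proof: if not, by pigeonhole some fixed $u'\ne u$ of length $|u|$ precedes infinitely many prefixes of $x$; comparing the longest common suffix $s$ of $u$ and $u'$ shows every prefix of $sx$ is left special in $y$, forcing $y=sx$, contradicting non-periodicity.) With this claim in hand the coloring is immediate: color a factor $z$ of $x$ by $|z|$ if it is a prefix of length $\le N$, by its suffix of length $|u|+1$ if it is a longer prefix, and by a single extra color otherwise. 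A monochromatic factorization of $x$ is then a prefixal factorization $x=U_1U_2\cdots$ with all $U_i$ ending in the same word $v$ of length $|u|+1$; by the claim $v=au$ for some letter $a$, and then $ay=(auU_1v^{-1})(vU_2v^{-1})(vU_3v^{-1})\cdots$ is a prefixal factorization of $ay$, contradicting Corollary~\ref{cor:N1}.

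So the paper does, in effect, exactly what you wanted --- transfer a hypothetical bad factorization of $x$ to one of $ay$ --- but the mechanism is the ``long prefixes of $x$ are always preceded by $u$'' claim, which is the step your proposal never reaches.
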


\begin{proof} Let us write $y=ux$ with $u\in A^*.$ We first claim that there exists a positive integer $N>|u|$ such that for every prefix $v$ of $x$ with $|v|\geq N,$ every occurrence of $v$ in $x$ is immediately preceded by $u$, i.e., if $x= \lambda v \xi$ with $\lambda \in A^*, \xi\in A^{\omega}$, and $|v|\geq N$, then $\lambda= \lambda' u$ with $\lambda'\in A^*$.  This is clear  when  $u$ is the empty word. So suppose $|u|=n\geq 1,$  and suppose to the contrary that there exist infinitely many prefixes of $x$ occurring
in $x$ immediately preceded by a word of length $n$ different from $u$. By  the pigeonhole principle,  there exists $u'\in A^n$ different from $u$ and  infinitely many prefixes $v$ of $x$ occurring in $x$ preceded by $u'.$ It follows that $\Ff u'x \subseteq \Ff y$. Let $s\in A^*$ denote the longest common suffix of $u$ and $u'$,  so $u=rs$ and $u'=r's$, where $r$ and $r'$ terminate with different letters. It follows that every prefix of $sx$ is a left special factor of $y.$ As $y$ is a non-periodic standard episturmian word, every prefix of $y$ is a left special factor of $y.$ Since every episturmian word has at most one left special factor of each length, it follows that $y=sx,$ contradicting that $y$ is not periodic. This establishes the claim.

Now we color the factors $z$ of $x$ by $|z|$ if $z$ is a prefix of $x$ with $|z|\leq N,$ by the suffix of $z$ of length $|u|+1$ if $z$ is a prefix of $x$ with $|z|>N,$ and by $A$ if $z$ is not a prefix of $x.$
Then, since $x$ is not periodic, a monochromatic factorization of $x$ corresponds to a prefixal factorization $x=U_1U_2U_3\cdots$  in which each prefix $U_i$ of $x$ terminates with the same suffix of length $|u|+1,$  (call it $v).$ By the previous claim, we can write $v=au$ for some $a\in A.$ But this defines a prefixal factorization of $ay=aux$: namely $ay=(auU_1v^{-1})(vU_2v^{-1})(vU_3v^{-1})\cdots$. Since $y$ is a non-periodic standard episturmian word, by Corollary~\ref{cor:N1} (see also Remark~\ref{rem:eli}), $ay$ does not admit a prefixal factorization,  a contradiction.
\end{proof}

\vspace{2 mm}

 \noindent
{\bf Acknowledgments:} We  are deeply indebted to Tom Brown for his suggestions and comments.  We thank three anonymous reviewers for many useful comments.

\vspace{2 mm}

\small

\end{document}